\newcolumntype{L}[1]{>{\raggedright\let\newline\\\arraybackslash\hspace{0pt}}m{#1}}
\newcolumntype{C}[1]{>{\centering\let\newline\\\arraybackslash\hspace{0pt}}m{#1}}
\newcolumntype{R}[1]{>{\raggedleft\let\newline\\\arraybackslash\hspace{0pt}}m{#1}}
\patchcmd{\SetTagPlusEndMark}{$}{}{}{}
\patchcmd{\SetTagPlusEndMark}{$}{}{}{}
\crefname{enumi}{}{}
\crefname{hypothesis}{Hypothesis}{Hypotheses}
\title{Tightness and Equivalence of Semidefinite Relaxations for MIMO Detection}%\thanks{Submitted to the editors DATE.
\author{Ruichen Jiang\thanks{Department of Electronic Engineering, Tsinghua University, Beijing 100084, China 
  (\email{rayjiang30@outlook.com}).}
\and Ya-feng Liu\thanks{State Key Laboratory of Scientific and Engineering Computing, Institute of Computational
Mathematics and Scientific/Engineering Computing, Academy of
Mathematics and Systems Science, Chinese Academy of Sciences, Beijing 
100190, China (\email{yafliu@lsec.cc.ac.cn}).}
\and Chenglong Bao\thanks{Yau Mathematical Sciences Center, Tsinghua University, Beijing 100084, China 
(\email{clbao@mail.tsinghua.edu.cn}).}
\and Bo Jiang\thanks{School of Mathematical Sciences, Key Laboratory for NSLSCS of Jiangsu Province, 
Nanjing Normal University, Nanjing 210023, China (\email{jiangbo@njnu.edu.cn}).}}
\DeclareMathOperator{\diag}{diag}
\newcommand\T{\mathsf{T}}
\newcommand\Vector[1]{\bm{#1}}
\newcommand\0{{\Vector{0}}}
\newcommand\1{{\Vector{1}}}
\newcommand\vb{{\Vector{b}}}
\newcommand\vc{{\Vector{c}}}
\newcommand\ve{{\Vector{e}}}
\newcommand\vg{{\Vector{g}}}
\newcommand\vr{{\Vector{r}}}
\newcommand\vs{{\Vector{s}}}
\newcommand\vt{{\Vector{t}}}
\newcommand\vv{{\Vector{v}}}
\newcommand\vw{{\Vector{w}}}
\newcommand\vx{{\Vector{x}}}
\newcommand\vy{{\Vector{y}}}
\newcommand\vz{{\Vector{z}}}
\newcommand\valpha{{\Vector{\alpha}}}
\newcommand\vgamma{{\Vector{\gamma}}}
\newcommand\vlambda{{\Vector{\lambda}}}
\newcommand\vmu{{\Vector{\mu}}}
\newcommand\MATRIX[1]{\mathbf{#1}}
\newcommand\mA{{\MATRIX{A}}}
\newcommand\mB{{\MATRIX{B}}}
\newcommand\mC{{\MATRIX{C}}}
\newcommand\mE{{\MATRIX{E}}}
\newcommand\mH{{\MATRIX{H}}}
\newcommand\mI{{\MATRIX{I}}}
\newcommand\mK{{\MATRIX{K}}}
\newcommand\mM{{\MATRIX{M}}}
\newcommand\mP{{\MATRIX{P}}}
\newcommand\mQ{{\MATRIX{Q}}}
\newcommand\mR{{\MATRIX{R}}}
\newcommand\mS{{\MATRIX{S}}}
\newcommand\mT{{\MATRIX{T}}}
\newcommand\mU{{\MATRIX{U}}}
\newcommand\mV{{\MATRIX{V}}}
\newcommand\mW{{\MATRIX{W}}}
\newcommand\mX{{\MATRIX{X}}}
\newcommand\mY{{\MATRIX{Y}}}
\newcommand\mZ{{\MATRIX{Z}}}
\newcommand\mhQ{{\hat{\MATRIX{Q}}}}
\newcommand\mGamma{{\MATRIX{\Gamma}}}
\newcommand\mLambda{{\MATRIX{\Lambda}}}
\newcommand\sS{{\mathbb{S}}}
\renewcommand\sS{{\mathcal{S}}}
\newcommand\blambda{{\bar{\lambda}}}
\newcommand\bvlambda{{\bar{\vlambda}}}
\newcommand\semiS{\mathbb{S}_{+}}
\newcommand\Prob{\mathbf{Prob}}
\newcommand\givenp[1][]{{\:#1\vert\:}}
\newcommand\Diag{{\mathrm{Diag}}}
\newcommand\Real{{\mathrm{Re}}}
\newcommand\Imag{{\mathrm{Im}}}
\DeclareMathOperator*{\E}{\mathbb{E}}
\newcommand\R{\mathbb{R}}
\newcommand\C{\mathbb{C}}
\newcommand\imag{{\mathbf{i}}}
\begin{document}

\maketitle

% REQUIRED
\begin{abstract}
  % This is an example SIAM \LaTeX\ article. This can be used as a
  % template for new articles.  Abstracts must be able to stand alone
  % and so cannot contain citations to the paper's references,
  % equations, etc.  An abstract must consist of a single paragraph and
  % be concise. Because of online formatting, abstracts must appear as
  % plain as possible. Any equations should be inline.
  The multiple-input multiple-output (MIMO) detection problem, a fundamental 
  problem in modern digital communications, is to detect 
  a vector of transmitted symbols from the noisy outputs of a fading MIMO channel. The 
  maximum likelihood detector can be 
  formulated as a complex least-squares problem with discrete variables, which is NP-hard in general. 
  Various semidefinite relaxation (SDR) methods have been proposed 
  in the literature to solve the problem due to their 
  polynomial-time worst-case complexity and good detection error rate performance. 
  In this paper, we consider two popular classes of SDR-based detectors 
  and study the conditions under which the SDRs are tight and the relationship between different SDR models. For the enhanced 
  complex and real SDRs proposed recently by Lu \emph{et~al.}, we refine their analysis and derive the necessary 
  and sufficient condition for the complex SDR to be tight, as well as a necessary condition 
  for the real SDR to be tight. In contrast, we also show that another SDR proposed by Mobasher \emph{et~al.} is 
  not tight with high probability under mild conditions. Moreover, we establish a general 
  theorem that shows the equivalence between two subsets of positive semidefinite matrices in 
  different dimensions by exploiting a special ``separable'' structure in the constraints. 
  Our theorem recovers two existing equivalence results of SDRs defined in different settings
  and has the potential to find other applications due to its generality.  
\end{abstract}

% REQUIRED
\begin{keywords}
  MIMO detection, semidefinite relaxation, tight relaxation, equivalent relaxation
\end{keywords}

% REQUIRED
\begin{AMS}
  90C22, 90C20, 90C46, 90C27
\end{AMS}
%90C22 Semidefinite programming
%90C20 Quadratic programming
%90C46 Optimality conditions, duality
%90C27 Combinatorial optimization

\section{Introduction}
% The introduction introduces the context and summarizes the
% manuscript. It is importantly to clearly state the contributions of
% this piece of work. The next two paragraphs are text filler,
% generated by the \texttt{lipsum} package.
% In multiple-input multiple-output (MIMO) detection, we 
% seek to recover input symbol vectors from outputs of the linear channel 
% corrupted by additive noises. 
Multiple-input multiple-output (MIMO) detection is a fundamental problem in modern
digital communications~\cite{multiuser_detection,fifty_years_of_MIMO}. 
The MIMO channel can be modeled as 
\begin{equation}\label{eq:mimo}
  \vr = \mH \vx^*+\vv,
\end{equation}
where $\vr\in \C^m$ is the vector of received signals, 
$\mH\in \C^{m\times n}$ is a complex channel matrix, 
$\vx^*$ is the vector of transmitted symbols, and 
$\vv$ is the vector of additive Gaussian noises. Moreover, 
each entry of $\vx^*$ is drawn from a discrete symbol set $\sS$ 
determined by the modulation scheme. 
 
% Then each entry of $\vx^*$ is drawn from the discrete symbol set $\sS_M$. 

The MIMO detection problem is to recover the transmitted symbol vector $\vx^*$ from the noisy channel output $\vr$, 
with the information of the symbol set $\sS$ and the channel matrix $\mH$. 
% In particular, 
% we aim to minimize the vector error rate (VER) defined as
% \begin{equation*}
%   \mathrm{VER} := \Prob(\hat{\vx}\neq \vx^*),
% \end{equation*}
% which is the probability of detecting a different symbol vector $\hat{\vx}$ from 
% the true vector $\vx^*$. Here, the randomness may come from the channel matrix $\mH$, the additive 
% noise $\vv$, and the transmitted symbol vector $\vx^*$.
Under the assumption that each entry of $\vx^*$ is drawn uniformly and independently from the 
symbol set $\sS$, 
it is known that the maximum likelihood detector can achieve the optimal detection error rate performance. 
Mathematically, it can be formulated as a discrete least-squares problem:
\begin{equation}\label{eq:ML}
  \begin{aligned}
      \min_{\vx\in \C^n}\;\;\; &\|\mH\vx-\vr\|_2^2 \\
      \mathrm{s.t.}\,\;\;\;      & x_i\in \mathcal{S},\;i=1,2,\ldots,n,\\ % align s.t. with min
  \end{aligned}
\end{equation}
where $x_i$ denotes the $i$-th entry of the vector $\vx$ and $\|\cdot\|_2$ denotes the Euclidean norm.  
In this paper, unless otherwise specified, we will focus on the $M$-ary phase shift keying~($M$-PSK) modulation, whose symbol set is given by 
\begin{equation}\label{eq:MPSK symbol set}
  \mathcal{S}_M:=\bigl\{z\in \C:\; |z|=1,\;\mathrm{arg}(z)\in \{2j\pi/M,\;j=0,1,\ldots,M-1\}\bigr\},
\end{equation} 
where $|z|$ and $\mathrm{arg}(z)$ denote the modulus and argument of a complex number, respectively. 
As in most practical digital communication systems, throughout the paper we require $M=2^b$ where $b\geq 1$ is an integer\footnote{Our 
results in \cref{sec:tightness}
also hold for the more general case where $M$ is a multiple of four.}. 
% Most MIMO detection algorithms are based on finding an (approximately) optimal 
% solution of \cref{eq:ML}.

Many detection algorithms have been proposed to solve problem \cref{eq:ML} either exactly or approximately. 
However, for 
general $\mH$ and $\vr$, problem \cref{eq:ML} 
has been proved to be NP-hard~\cite{computational_complexity}. 
Hence, no polynomial-time algorithms can find the exact solution (unless $\text{P}=\text{NP}$). 
Sphere decoding~\cite{sphere_decoder}, a classical combinatorial algorithm based on the branch-and-bound paradigm, 
offers an efficient way to solve problem~\cref{eq:ML} exactly 
when the problem size is small, 
but its expected complexity is still exponential~\cite{complexity_of_SD}.
% but still suffers an exponential-time expected complexity 
On the other hand, some suboptimal algorithms such as linear detectors~\cite{linear_ZF,layered_space_time} and 
decision-feedback detectors~\cite{decision_feedback,decision_feedback_cdma} enjoy low complexity but
at the expense of substantial performance loss: see~\cite{fifty_years_of_MIMO} for an excellent review. 
% \mytodo{\cite{fifty_years_of_MIMO} provides 
% a very detailed literature review, but I am not sure which references I should add. In particular, 
% the same method may be proposed many times with different contexts. 
% Should I cite the earliest papers dating back to 60s/70s, or the most influential ones, or 
% some review papers?}

Over the past two decades, semidefinite relaxation (SDR) has gained increasing attention in non-convex optimization~\cite{sdr_maxcut,SDR_quadratic,phase_recovery_complex_SDP}. 
It is a celebrated technique to tackle quadratic optimization problems arising from  
% offering superior performance in practice while maintaining polynomial-time worst-case complexity on 
various signal processing and wireless communication applications, such as beamforming design~\cite{transmit_beamforming,max_min_linear_transceiver}, 
sensor network localization~\cite{sensor_network_localization,sdp_sensor_network_localization,theory_sdp_snl}, and angular synchronization 
\cite{angular_synchronization,tightness_sdp_angular_synchronization,near_optimal_phase_sync}.
Such SDR-based approaches can usually offer superior performance in both theory and practice while 
maintaining polynomial-time worst-case complexity.

For MIMO detection problem \cref{eq:ML}, the first SDR detector~\cite{SDR_BPSK,SDR_BPSK2} was designed for the real MIMO channel and the binary symbol set $\mathcal{S}=\{+1,-1\}$.
Notably,    
it is proved that 
this detector can achieve the maximal possible diversity order~\cite{diversity_SDR}, 
meaning that it achieves an asymptotically optimal detection error rate 
when the signal-to-noise ratio (SNR) is high. 
It was later extended to the more general setting with a complex channel and an $M$-PSK symbol set in~\cite{SDR_QPSK,SDR_MPSK}, 
which we refer to as the conventional SDR or \cref{eq:CSDR}.
However, this conventional approach fails to fully utilize the structure in the symbol set $\mathcal{S}$.
To overcome this issue, researchers have developed various improved SDRs and we consider the two most popular classes  
below. The first class proposed in~\cite{near_ml_decoding} is 
based on an equivalent 
zero-one integer programming formulation of problem \cref{eq:ML}.
Four SDR models were introduced and    
two of them will be discussed in details later (see \cref{eq:Model II} and \cref{eq:Model III} further ahead). 
The second class proposed in~\cite{tightness_enhanced} further enhances \cref{eq:CSDR}    
by adding valid cuts, resulting in a complex SDR and a real SDR (see \cref{eq:CSDP2} and \cref{eq:ERSDP} later on).  

In this paper, we focus on two key problems in SDR-based MIMO detection: the tightness of SDRs and 
the relationship between different SDR models. Firstly, 
note that SDR detectors are suboptimal algorithms as they replace 
the original discrete optimization problem \cref{eq:ML} with tractable semidefinite programs (SDPs). Hence,  
after solving an SDP, we need some rounding procedure to make final symbol decisions. 
However, under some favorable conditions on $\mH$ and 
$\vv$, an SDR can be \emph{tight}, i.e., it has an optimal rank-one solution corresponding to 
the true vector of transmitted symbols. 
Such tightness conditions are of great interest since they give theoretical guarantees 
on the optimality of SDR detectors.  
While it has been well studied for the simple case~\cite{detection_mimo,optimal_condition,performance_analysis_sdp,probabilistic_analysis_SDR} where $\mH\in \R^{m\times n}$, $\vv\in \R^{m}$, and $\mathcal{S}=\{+1,-1\}$, for the more general case where $\mH\in \C^{m\times n}$, $\vv\in \C^m$, and $\mathcal{S}=\mathcal{S}_M$ ($M\geq 4$),  
tightness conditions for SDR detectors have remained unknown until very recently. The authors in~\cite{tightness_enhanced} showed 
that \cref{eq:CSDR} is not tight with probability one under some mild conditions. On the other hand,  
their proposed enhanced SDRs are tight\footnote{The definition of tightness in~\cite{tightness_enhanced} is slightly different from ours since
they also require the optimal solution of the SDR to be unique. } if the following condition is satisfied: 
\begin{equation}\label{eq:CSDP2_suff}
  \lambda_{\mathrm{min}}(\mH^\dagger \mH)\sin\left(\frac{\pi}{M}\right)
  >\|\mH^\dagger \vv\|_{\infty},
\end{equation}
where $\lambda_{\mathrm{min}}(\cdot)$ denotes the smallest eigenvalue of a matrix, 
$(\cdot)^\dagger$ denotes the conjugate transpose, and 
$\|\cdot\|_{\infty}$ denotes the $L_{\infty}$-norm. To the best of our knowledge, this 
is the best condition that guarantees a certain SDR to be tight for problem \cref{eq:ML} in the $M$-PSK settings. 

Secondly, researchers have noticed some rather unexpected equivalence between 
different SDR models independently developed in the 
literature. The earliest one of such results is reported in~\cite{QAM_equivalence}, where three different SDRs for the high-order 
quadrature amplitude modulation (QAM) symbol sets are proved to be equivalent.  
Very recently, the authors in~\cite{liu2019equivalence} showed that 
the enhanced real SDR proposed in~\cite{tightness_enhanced} is equivalent to 
one SDR model in~\cite{near_ml_decoding}. It is worth noting that while these 
two papers are of the same nature, the proof techniques are quite different and 
it is unclear how to generalize their results at present. 
 
% \todo[inline]{Introduce SDP; Introduce current tightness result; equivalence result; our contributions}
% \todo[inline]{We hope to provide a complete picture of SDR MIMO detectors in 
% the $M$-PSK setting.}

In this paper, we make contributions to both problems. 
% For ease of reference, we summarize previous results and our contributions 
% in \cref{fig:overview}. 
For the tightness of SDRs, we 
sharpen the analysis in~\cite{tightness_enhanced} to give the necessary and sufficient condition 
for the complex enhanced SDR to be tight, and a necessary condition 
for the real enhanced SDR to be tight. Specifically, for the case where $M\geq 4$, we show that the enhanced complex 
SDR \cref{eq:CSDP2} is tight if and only if 
\begin{equation}\label{eq:CSDP2_necc_and_suff_intro}
  \mH^\dagger \mH + \Diag\big(\mathrm{Re}(\Diag(\vx^*)^{-1}\mH^\dagger \vv)\big)
  -\cot\left(\frac{\pi}{M}\right)\Diag(|\mathrm{Im}(\Diag(\vx^*)^{-1}\mH^\dagger \vv)|\big) \succeq 0,
\end{equation}
while the enhanced real SDR \cref{eq:ERSDP} is tight only if 
\begin{equation}\label{eq:ERSDP_necc_intro}
  \mH^\dagger \mH + \Diag\big(\mathrm{Re}(\Diag(\vx^*)^{-1}\mH^\dagger \vv)\big)
  -\cot\left(\frac{2\pi}{M}\right)\Diag(|\mathrm{Im}(\Diag(\vx^*)^{-1}\mH^\dagger \vv)|\big) \succeq 0,
\end{equation}
where $\mA \succeq 0$ means that the matrix $\mA$ is positive semidefinite (PSD), 
$\Diag(\vx)$ denotes a diagonal matrix whose diagonals are the vector $\vx$,
and $\Real(\cdot)$, $\Imag(\cdot)$, and $|\cdot|$ denote the 
entrywise real part, imaginary part, and absolute value of a number/vector/matrix, respectively. 
Moreover, we prove that 
one of the SDR models proposed in~\cite{near_ml_decoding} is generally not tight: under some 
mild assumptions, its probability of being tight decays exponentially with respect to the number 
of transmitted symbols $n$. 

For the relationship between different SDR models, we propose a 
general theorem showing the equivalence between two subsets of PSD cones.
Specifically, we prove the correspondence between 
a subset of a high-dimensional PSD cone with a special ``separable'' structure 
and the one in a lower dimension.  
Our theorem covers both equivalence results in~\cite{QAM_equivalence} and~\cite{liu2019equivalence}
% \mytodo{in~\cite{QAM_equivalence}the authors prove two results: PI-SDR=BC-SDR and 
% VA-SDR=BC-SDR. Currently, we only discuss how our theorem implies the latter. Will this statement be regarded as 
% ambiguous?} 
as special cases, and has the potential to find other 
applications due to its generality. 

% \begin{figure}[htbp]
%   \centering
%   \includegraphics[width=\linewidth]{figs/overview5_cropped.pdf}
%   \caption{Overview of previous results and our contributions (in red and bold). 
%   Here, ``$A\preceq B$'' means the latter is tighter than the former and ``$A=B$'' means the two SDRs 
%   are equivalent. 
%   }
%   \label{fig:overview}
% \end{figure}

% The outline is not required, but we show an example here.
The paper is organized as follows. 
We introduce the existing SDRs for \cref{eq:ML} in \cref{sec:review_of_SDR} and analyze their tightness in \cref{sec:tightness}. 
% including an enhanced complex SDR and a real SDR in~\cite{tightness_enhanced}, as well as 
% two SDR models in~\cite{near_ml_decoding}.  
% We show that the enhanced complex SDR in~\cite{tightness_enhanced} is tight if and only if \cref{eq:CSDP2_necc_and_suff_intro} 
% holds, while the enhanced real SDR in~\cite{tightness_enhanced} is tight only if \cref{eq:ERSDP_necc_intro} holds. 
% Moreover, we prove that one SDR in~\cite{near_ml_decoding} is not tight with high probability.
In \cref{sec:equiv}, % we review the equivalence results~\cite{QAM_equivalence,liu2019equivalence} and  
% put them in a united framework. 
we propose a general theorem that establishes the  
equivalence between two subsets of PSD cones in different dimensions,
and discuss how 
our theorem implies previous results. 
\Cref{sec:experiments} provides some numerical results to validate our analysis. 
% tightness conditions. We also 
% extend the simulations in~\cite{liu2019equivalence} to further verify the SDR equivalence and 
% the computational speedup due to dimension reduction. 
Finally, \cref{sec:conclusions} concludes the paper.
% Our main results are in
% \cref{sec:main}, our new algorithm is in \cref{sec:alg}, experimental
% results are in \cref{sec:experiments}, and the conclusions follow in
% \cref{sec:conclusions}.

We summarize some standard notations used in this paper.
We use 
$x_i$ to denote the $i$-th entry of a vector $\vx$ and 
$X_{i,j}$ to denote the $(i,j)$-th entry of 
a matrix $\mX$. 
We use $|\cdot|$, $\|\cdot\|_2$, and $\|\cdot\|_{\infty}$ to denote 
the entrywise absolute value, 
the Euclidean norm, and the $L_{\infty}$ norm
of a vector, respectively. 
For a given number/vector/matrix, we use 
$(\cdot)^\dagger$ to denote the conjugate transpose, 
$(\cdot)^{\T}$ to denote the transpose, and
$\Real(\cdot)$/$\Imag(\cdot)$ to denote the entrywise 
real/imaginary part. 
We use $\Diag(\vx)$ to denote 
the diagonal matrix whose diagonals are the vector $\vx$, 
and $\diag(\mX)$ to denote the vector whose 
entries are the diagonals of the matrix $\mX$. 
Given an $m\times n$ matrix $\mA$ and the index sets 
$\alpha \subset \{1,2,\ldots,m\}$ and $\beta \subset  \{1,2,\ldots,n\}$, 
we use $\mA[\alpha,\beta]$ to denote the submatrix with entires in the rows of $\mA$ 
indexed by $\alpha$ and the columns indexed by $\beta$. 
Moreover, we denote the principal submatrix $\mA[\alpha,\alpha]$ by 
$\mA[\alpha]$ in short. 
For two matrices $\mA$ and $\mB$ of appropriate size, 
$\langle \mA,\mB\rangle:=\Real(\mathrm{Tr}(\mA^\dagger \mB))$ denotes 
the inner product, $\mA \otimes \mB$ denotes the Kronecker product, 
and $\mA\succeq \mB$ means $\mA-\mB$ is PSD. 
For a set $\mathcal{A}$ in a vector space, 
we use $\mathrm{conv}(\mathcal{A})$ to denote its convex hull.  
% $\mathrm{span}(\mathcal{A})$ to denote its linear span, 
% and $\mathrm{null}(\mathcal{A})$ to denote the orthogonal complement of 
% $\mathrm{span}(\mathcal{A})$.
For a random variable $X$ and measurable sets 
$\mathcal{B}$ and $\mathcal{C}$, $\Prob(X\in \mathcal{B})$ denotes the 
probability of the event $\{X\in \mathcal{B}\}$, 
$\Prob(X\in \mathcal{B}\givenp \mathcal{C})$ denotes 
the conditional probability given $\mathcal{C}$, 
and $\E[X]$ denotes the expectation 
of $X$.
Finally, 
the symbols $\imag$, $\1_n$, $\mI_n$, and $\semiS^n$ represent the 
imaginary unit, the $n\times1$ all-one vector,
the $n\times n$ identity matrix, and the 
$n$-dimensional PSD cone, respectively.

\section{Review of semidefinite relaxations}
\label{sec:review_of_SDR}
In this paper, we focus on the $M$\nobreakdash-PSK setting with the symbol set $\mathcal{S}_M$ given in \cref{eq:MPSK symbol set}.  
To simplify the notations, we let $\vs\in \C^M$ be the vector of all symbols, where
\begin{equation*} %\label{eq:def_of_s_theta}
  s_j = e^{\imag\theta_j}\;\text{and}\;\theta_j=\frac{(j-1)2\pi}{M},\;
  j=1,2,\ldots,M,
\end{equation*}
and further we let $\vs_R=\Real(\vs)$ and $\vs_I=\Imag(\vs)$. 
% \begin{equation}\label{eq:def_of_sR_sI}
%   \vs_R=\Real(\vs)\;\text{and}\;\vs_I=\Imag(\vs).
% \end{equation}

% Now we explain how to apply the SDR technique to the MIMO detection problem.    
% Let $\mQ=\mH^\dagger \mH$ and $\vc=-\mH^\dagger \vr$, 
The objective in \cref{eq:ML} can be written as 
\begin{equation*}
  \|\mH\vx-\vr\|_2^2=\vx^\dagger \mQ \vx+2\mathrm{Re}(\vc^\dagger \vx)+\vr^\dagger \vr
  = \langle \mQ, \vx\vx^\dagger \rangle+2\mathrm{Re}(\vc^\dagger \vx)+\vr^\dagger \vr,
\end{equation*}
where we define 
\begin{equation}\label{eq:def of Q and c}
  \mQ=\mH^\dagger \mH \; \text{and} \; \vc=-\mH^\dagger \vr.
\end{equation}
By introducing $\mX=\vx\vx^\dagger$ and discarding the constant $\vr^\dagger \vr$, 
we can reformulate \cref{eq:ML} as 
\begin{equation}\label{eq:reformulated ML}
  \begin{aligned}
      \min_{\vx,\mX}\;\;\; &\langle \mQ,\mX \rangle+2 \mathrm{Re}(\vc^\dagger \vx) \\
      \mathrm{s.t.}\;\;\; & X_{i,i}=1,\;i=1,2,\ldots,n,\\
      & x_i \in \mathcal{S}_M,\;i=1,2,\ldots,n,\\
      &\mX= \vx\vx^\dagger,
  \end{aligned}
\end{equation}
where the constraint $X_{i,i}=1$ comes from $X_{i,i}=|x_i|^2=1$. 
% The main difficulties of solving \cref{eq:reformulated ML} lie in 
% the non-convex constraints $x_i \in \mathcal{S}_M$ and $\mX= \vx\vx^\dagger$. 
The conventional SDR (CSDR) in~\cite{SDR_QPSK,SDR_MPSK} simply drops the discrete symbol constraints $x_i \in \mathcal{S}_M$ 
and relaxes the rank-one 
constraint to $\mX\succeq \vx\vx^\dagger$, resulting in the following 
relaxation:
\begin{equation}\label{eq:CSDR}\tag{CSDR}
  \begin{aligned}
      \min_{\vx,\mX}\;\;\; &\langle \mQ,\mX \rangle+2 \mathrm{Re}(\vc^\dagger \vx) \\
      \mathrm{s.t.}\;\;\; & X_{i,i}=1,\;i=1,2,\ldots,n,\\
      &\mX\succeq \vx\vx^\dagger,
  \end{aligned}
\end{equation}
where $\vx\in \C^n$ and $\mX\in \C^{n\times n}$.
Since $\mX\succeq \vx\vx^\dagger$ is equivalent to 
\begin{equation*}
  \begin{bmatrix}
    1 & \vx^\dagger \\
    \vx & \mX
  \end{bmatrix}
  \succeq 0,
\end{equation*}
the above \cref{eq:CSDR} is an SDP on the complex domain. 
% It was first proposed in the BPSK setting (where $M=2$) in~\cite{SDR_BPSK,SDR_BPSK2}, and 
% later extended to the QPSK setting (where $M=4$) in~\cite{SDR_QPSK}, and 
% the more general $M$-PSK setting in~\cite{SDR_MPSK}.
Moreover, for the simple case where $\mH\in \R^{m\times n}$, $\vv\in \R^m$, and $M=2$, a real SDR similar to \cref{eq:CSDR} has the form: 
\begin{equation}\label{eq:BSDR}
  \begin{aligned}
      \min_{\vx,\mX}\;\;\; &\langle \mQ,\mX \rangle+2 \vc^\T \vx \\
      \mathrm{s.t.}\;\;\; & X_{i,i}=1,\;i=1,2,\ldots,n,\\
      &\mX\succeq \vx\vx^{\T},
  \end{aligned}
\end{equation}
where $\vx\in\R^n$, $\mX\in \R^{n\times n}$, and we redefine $\mQ=\mH^{\T}\mH$ and $\vc=-\mH^{\T}\vr$ (cf. \cref{eq:def of Q and c}).
The problem \cref{eq:BSDR} has also been extensively studied in the literature~\cite{SDR_BPSK,SDR_BPSK2,detection_mimo,optimal_condition,performance_analysis_sdp,probabilistic_analysis_SDR}. 
It is proved in~\cite{detection_mimo,optimal_condition} that 
\cref{eq:BSDR} is tight if and only if
\begin{equation}\label{eq:real_channel_binary}
  \mH^{\T}\mH+[\Diag(\vx^*)]^{-1}\Diag(\mH^{\T}\vv)\succeq 0,
\end{equation}
while \cref{eq:CSDR} is not tight for $M\geq 4$ with probability one under some mild conditions~\cite{tightness_enhanced}. 

Recently, a class of enhanced SDRs was proposed in~\cite{tightness_enhanced}. 
Instead of simply dropping the constraints $x_i\in \sS_M$ as in \cref{eq:CSDR},
the authors replaced the discrete symbol set $\sS_M$ by its convex hull to get a 
continuous relaxation:
\begin{equation}\label{eq:CSDP2}\tag{ESDR-$\mX$}
  \begin{aligned}
      \min_{\vt,\vx,\mX}\;\;\; &\langle \mQ,\mX \rangle+2 \mathrm{Re}(\vc^\dagger \vx) \\
      \mathrm{s.t.}\,\;\;\; & X_{i,i}=1,\;i=1,2,\ldots,n,\\
      & x_i = \sum_{j=1}^M t_{i,j}s_j,\;\sum_{j=1}^{M}t_{i,j}=1,\;i=1,2,\ldots,n,\\
      & t_{i,j}\geq 0,\;j=1,2,\ldots,M,\;i=1,2,\ldots,n,\\
      &\mX\succeq \vx\vx^\dagger,
  \end{aligned}
\end{equation}
where $\vx\in\C^n$, $\mX\in\C^{n\times n}$, and 
$\vt\in \R^{Mn}$ is the concatenation of $M$-dimensional vectors $\vt_1,\vt_2,\ldots,\vt_n$ 
with $\vt_i=[t_{i,1},t_{i,2},\ldots,t_{i,M}]^{\T}$. 
The authors in~\cite{tightness_enhanced} further proved that \cref{eq:CSDP2} is tight if condition \cref{eq:CSDP2_suff} holds. 
We term the above SDP as ``ESDR\nobreakdash-$\mX$'', where ``E'' stands for ``enhanced'' and 
``$\mX$'' refers to the matrix variable. The same naming convention is adopted for 
all the SDRs below.

We can also formulate \cref{eq:reformulated ML} 
in the real domain and then use the same technique to get a real counterpart 
of \cref{eq:CSDP2}. Let
\begin{equation}\label{eq:def of hat Q and hat c}
  {\vy} = 
  \begin{bmatrix}
    \Real(\vx) \\
    \Imag(\vx)
  \end{bmatrix},\;
  \hat{\mQ}=
  \begin{bmatrix}
    \Real(\mQ)  & -\Imag(\mQ) \\
    \Imag(\mQ)  & \Real(\mQ)
  \end{bmatrix},\;\text{and}\;
  \hat{\vc} = 
  \begin{bmatrix}
    \Real(\vc) \\
    \Imag(\vc)
  \end{bmatrix},
\end{equation}
then the real enhanced SDR \cref{eq:ERSDP} is given by
\begin{equation}\label{eq:ERSDP}\tag{ESDR-$\mY$}
  \begin{aligned}
      \min_{\vt,\vy,\mY}\;\;\; &\langle \hat{\mQ},\mY \rangle+2 \hat{\vc}^{\T} \vy \\
      \mathrm{s.t.}\;\;\;\; & \mathcal{Y}(i) = \sum_{j=1}^{M}t_{i,j}\mK_j,
      \;\sum_{j=1}^{M}t_{i,j}=1, \;i=1,2,\ldots,n,\\
                          & t_{i,j}\geq 0,\;j=1,2,\ldots,M,\;i=1,2,\ldots,n,\\
                          & \mY\succeq \vy\vy^{\T},
                          % \begin{bmatrix}
                          %     1 & y^{\T} \\
                          %     y & Y
                          % \end{bmatrix} \succeq 0.
  \end{aligned}
\end{equation}
where $\vt\in \R^{Mn},\;\vy\in \R^{2n},\;\mY\in \R^{2n\times 2n}$, and we define
\begin{equation*}
  \mathcal{Y}(i):=
  \begin{bmatrix}
    1 & y_i & y_{n+i} \\
    y_i & Y_{i,i} & Y_{i,n+i} \\
    y_{n+i} & Y_{n+i,i} & Y_{n+i,n+i}
  \end{bmatrix},\;
  i=1,2,\ldots,n.
\end{equation*}
In \cref{eq:ERSDP}, these $3\times 3$ matrices are constrained in a convex hull whose extreme points are
% \begin{equation*}
%   \mP_j = 
%   \begin{bmatrix}
%     1 \\  \cos\bigl(\frac{2(j-1)\pi}{M}\bigr) \\  \sin\bigl(\frac{2(j-1)\pi}{M}\bigr)
%   \end{bmatrix}
%   \begin{bmatrix}
%     1 & \cos\bigl(\frac{2(j-1)\pi}{M}\bigr) & \sin\bigl(\frac{2(j-1)\pi}{M}\bigr)
%   \end{bmatrix},\;
%   j=1,2,\ldots,M.
% \end{equation*}
\begin{equation}\label{eq:def of K}
  \mK_j = 
  \begin{bmatrix}
    1 \\  s_{R,j} \\  s_{I,j}
  \end{bmatrix}
  \begin{bmatrix}
    1 \\  s_{R,j} \\  s_{I,j}
  \end{bmatrix}^{\T},\;
  j=1,2,\ldots,M,
\end{equation}
where $s_{R,j}=\Real(s_j)$ and $s_{I,j}=\Imag(s_j)$. 
It has been shown that \cref{eq:ERSDP} is tighter than \cref{eq:CSDP2}~\cite[Theorem~4.1]{tightness_enhanced}, and hence 
\cref{eq:ERSDP} is tight whenever \cref{eq:CSDP2} is tight.

Now we turn to another class of SDRs developed from a different perspective in~\cite{near_ml_decoding}, 
which is applicable to a general symbol set.  
% In~\cite{near_ml_decoding}, the authors develop another class 
% of SDR from a very different perspective. 
The idea is to introduce 
binary variables to express $x_i\in \mathcal{S}_M$ by 
\begin{equation}\label{eq:express discrete by 01}
  x_i = \vt_i^{\T}\vs,\;i=1,2,\ldots,n,
\end{equation}
where $\vt_i=[t_{i,1},t_{i,2},\ldots,t_{i,M}]^{\T}$, $\sum_{j=1}^M t_{i,j}=1$, and $t_{i,j}\in \{0,1\}$. 
The above constraints~\cref{eq:express discrete by 01} can be rewritten in a compact form as 
% \begin{equation}\label{eq:x=St}
$\vx = {\mS} \vt$,
% \end{equation}
where $\mS=\mI_n \otimes \vs^{\T}$ and we concatenate all vectors $\vt_i$ 
to get $\vt=[\vt^{\T}_1,\ldots,\vt^{\T}_n]^{\T}\in \R^{Mn}$. 
Similarly, we can also formulate \cref{eq:express discrete by 01} in the real domain as 
% \begin{equation*}
$\vy = \hat{\mS}\vt$,
% \end{equation*}
where 
\begin{equation}\label{eq:def_of_S_hat}
  \vy = 
  \begin{bmatrix}
    \Real(\vx) \\
    \Imag(\vx)
  \end{bmatrix}\;\text{and}\;
  \hat{\mS} = 
  \begin{bmatrix}
    \Real(\mS) \\
    \Imag(\mS)
  \end{bmatrix}
  = 
  \begin{bmatrix}
    \mI_n \otimes \vs_R^{\T} \\
    \mI_n \otimes \vs_I^{\T}
  \end{bmatrix}.
\end{equation}
By introducing $\mT=\vt\vt^{\T}\in \R^{Mn\times Mn}$, the problem \cref{eq:ML} is equivalent to 
\begin{equation}\label{eq:reformulated ML T}
  \begin{aligned}
      \min_{\vt,\mT}\;\;\; &\langle \bar{\mQ},\mT \rangle+2\bar{\vc}^{\T} \vt \\
      \mathrm{s.t.}\;\;\; & \sum_{j=1}^M t_{i,j}=1,\;i=1,2,\ldots,n,\\
      & t_{i,j}\in\{0,1\},\;\;j=1,2,\ldots,M,\;i=1,2,\ldots,n,\\
      &\mT= \vt\vt^{\T},
  \end{aligned}
\end{equation}
where
\begin{equation}\label{eq:def_of_bar_Q_bar_c}
  \bar{\mQ} = \hat{\mS}^{\T}\hat{\mQ}\hat{\mS}\;\text{and}\; 
 \bar{\vc} = \hat{\mS}^{\T}\hat{\vc}.
\end{equation}
To derive an SDR for \cref{eq:reformulated ML T}, we first allow 
$t_{i,j}$ to take any value between 0 and 1. % ($i=1,2,\ldots,n$ and $j=1,2,\ldots,M$). 
For the rank-one constraint $\mT=\vt\vt^{\T}$, the authors in~\cite{near_ml_decoding} 
proposed four ways of relaxation and we will introduce two of them in the following\footnote{
  Our formulations are slightly different from the original ones in~\cite{near_ml_decoding} 
  since they used the equality constraints
  to eliminate one variable for each $t_i$ before relaxing the PSD constraint. 
  However, in numerical tests we found that this variation only causes
  a negligible difference in the optimal solutions of the SDRs. 
}. 
% \todo{The formulations here look different from the original ones in~\cite{near_ml_decoding}, 
% and their equivalence seems not obvious. Do we need to prove this?}
We first partition $\mT$ as an $n\times n$ block matrix
\begin{equation*}
  \mT=  
  \begin{bmatrix}
    \mT_{1,1} & \mT_{1,2} & \ldots & \mT_{1,n} \\
    \mT_{2,1} & \mT_{2,2} & \ldots & \mT_{2,n} \\
    \vdots    &  \vdots   & \ddots &  \vdots   \\
    \mT_{n,1} & \mT_{n,2} & \ldots & \mT_{n,n}
  \end{bmatrix}, 
\end{equation*}
where $\mT_{i,j}\in \R^{M\times M}$ for $i=1,2,\ldots,n$ and $j=1,2,\ldots,n$. 
In the first model, we relax $\mT=\vt\vt^{\T}$ to $\mT\succeq \vt\vt^{\T}$ and 
impose constraints on the diagonal elements: 
\begin{equation}\label{eq:Model II}\tag{ESDR1-$\mT$}
  \begin{aligned}
  \min_{\vt,\mT}\;\;\; & \langle\bar{\mQ},\mT\rangle+2\bar{\vc}^{\T}\vt\\
  \mathrm{s.t.}\;\;\; & t_{i,j}\geq 0,\;\sum_{j=1}^{M}t_{i,j}=1,\;j=1,2,\ldots,M,\;i=1,2,\ldots,n,\\
   & \diag({\mT}_{i,i})={\vt}_{i},\;i=1,2,\ldots,n,\\
   & \mT\succeq\vt\vt^{\T},
  \end{aligned}
\end{equation}
where $\vt\in \R^{Mn}$ and $\mT\in \R^{Mn\times Mn}$. 
The second model further requires $\mT_{i,i}$ to be a diagonal matrix, leading 
to the following SDR:
\begin{equation}\label{eq:Model III}\tag{ESDR2-$\mT$}
  \begin{aligned}
  \min_{\vt,\mT}\;\;\; & \langle\bar{\mQ},\mT\rangle+2\bar{\vc}^{\T}\vt\\
  \mathrm{s.t.}\;\;\; & t_{i,j}\geq 0,\;\sum_{j=1}^{M}t_{i,j}=1,\;j=1,2,\ldots,M,\;i=1,2,\ldots,n,\\
   & {{\mT}_{i,i}=\Diag({\vt}_{i})},\;i=1,2,\ldots,n,\\
   & \mT\succeq\vt\vt^{\T}.
  \end{aligned}
\end{equation}
Since \cref{eq:Model III} puts more constraints on the variables $\vt$ and $\mT$,  
\cref{eq:Model III} is tighter than \cref{eq:Model II}. Notably, 
it is shown in~\cite{liu2019equivalence} that \cref{eq:Model III} is equivalent to \cref{eq:ERSDP}, 
and hence \cref{eq:CSDP2_suff} is also a sufficient condition for \cref{eq:Model III} to be tight.  

% \begin{equation}\label{eq:Model IV}\tag{ESDR2-$\mT$+}
%   \begin{aligned}
%   \min_{\vt,\mT}\;\;\; & \langle\bar{\mQ},\mT\rangle+2\bar{\vc}^{\T}\vt\\
%   \mathrm{s.t.}\;\;\; & t_{i,j}\geq 0,\;\sum_{j=1}^{M}t_{i,j}=1,\;j=1,2,\ldots,M,\;i=1,2,\ldots,n,\\
%    & {{\mT}_{i,i}=\Diag({\vt}_{i})},\;i=1,2,\ldots,n,\\
%    & \mT\geq 0, \\
%    & \mT\succeq\vt\vt^{\T},
%   \end{aligned}
% \end{equation}

\cref{tab:summary of sdrs} summarizes all SDR models discussed in this paper, where we highlight our 
contributions on the tightness of different SDRs in bold. 
\begin{table}[htbp]\label{tab:summary of sdrs}
  {\footnotesize
  \caption{Summary of SDR models in this paper.}
  \begin{center}
    \begin{tabular}{|c|C{19mm}|c|C{19mm}|p{43mm}|}
      \hline
      SDR model         & Origin & Domain & Dimension of PSD cone & Comments \\ \hline
      CSDR  & Ma \textit{et al}.~\cite{SDR_MPSK}        & $\C$   & $n+1$                        &      tight with probability 0~\cite{tightness_enhanced}                          \\ \hline
      ESDR-$\mX$  & CSDP2 in Lu \textit{et al}.~\cite{tightness_enhanced}        & $\C$   & $n+1$                        &   \textbf{tight if and only if \cref{eq:CSDP2_necc_and_suff_intro} holds}                             \\ \hline
      ESDR-$\mY$  & ERSDP in Lu \textit{et al}.~\cite{tightness_enhanced}        & $\R$   & $2n+1$                       &   \textbf{tight only if \cref{eq:ERSDP_necc_intro} holds}                             \\ \hline
      ESDR1-$\mT$ & Model II in Mobasher \textit{et al}.~\cite{near_ml_decoding}         & $\R$   & $Mn+1$                       & \textbf{tight with probability no greater than~$(2/M)^n$}                               \\ \hline
      ESDR2-$\mT$ & Model III in Mobasher \textit{et al}.~\cite{near_ml_decoding}        & $\R$   & $Mn+1$                       &  equivalent to ESDR-$\mY$~\cite{liu2019equivalence}                             \\ \hline
      \end{tabular}
  \end{center}
  }
\end{table}
% !TeX root = ./SDR_tightness_equiv.tex
\section{Tightness of semidefinite relaxations}
\label{sec:tightness}

\subsection{Tightness of \texorpdfstring{{\cref{eq:CSDP2}}}{(ESDR-X)}}
% Our derivation is based on~\cite[Theorem~4.2]{tightness_enhanced}. 
Let $\mX^*=\vx^*(\vx^*)^\dagger$, and the key 
idea of showing the tightness of \cref{eq:CSDP2}
is to certify $(\vx^*,\mX^*)$ as the optimal solution by 
considering the 
Karush-Kuhn-Tucker (KKT) conditions of \cref{eq:CSDP2}. 
Our derivation is based on~\cite[Theorem~4.2]{tightness_enhanced} and 
we provide a simplified version for completeness.
\begin{theorem}[{\cite[Theorem~4.2]{tightness_enhanced}}]\label{thm:main thm for CSDP2}
  Suppose that $M\geq 4$. Then  
  $(\vx^*,\mX^*)$ is the optimal solution of \cref{eq:CSDP2} 
  if and only if there exist
  \begin{equation*}
      \lambda_i\in \R,\;\mu_{i,-1}\geq 0,\;\text{and}\;\mu_{i,1}\geq 0,\;i=1,2,\ldots,n,
  \end{equation*}
  such that $\mH$ and $\vv$ in \cref{eq:mimo} satisfy 
  \begin{equation*}
      (x_i^*)^{-1}(\mH^\dagger \vv)_i = 
      \lambda_i + \frac{\mu_{i,-1}}{2}e^{-\imag\frac{\pi}{M}} + \frac{\mu_{i,1}}{2}e^{\imag\frac{\pi}{M}},\;
      i=1,2,\ldots,n,
  \end{equation*}
  and 
  $
      \mQ + \Diag(\vlambda) \succeq 0
  $.
  % Furthermore, 
\end{theorem}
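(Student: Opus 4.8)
The plan is to certify that $(\vx^*,\mX^*)$ minimizes \cref{eq:CSDP2} by analyzing the Karush--Kuhn--Tucker (KKT) system, exploiting that \cref{eq:CSDP2} is a convex semidefinite program. First I would record feasibility and a constraint qualification: $(\vx^*,\mX^*)$ together with the vertex $\vt^*$ defined by $t^*_{i,j}=1$ when $s_j=x_i^*$ and $t^*_{i,j}=0$ otherwise is feasible (since $|x_i^*|=1$ forces $X^*_{i,i}=1$), while $\vx=\0$, $\mX=\mI_n$, $\vt_i=(1/M)\1_M$ is a strictly feasible point for the semidefinite and nonnegativity constraints (note $\sum_j s_j=0$ for $M\ge 2$). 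Hence strong duality holds and the KKT conditions are necessary and sufficient for global optimality, so it suffices to show that the KKT system at $(\vx^*,\mX^*,\vt^*)$ is solvable precisely when the two displayed conditions hold.

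Next I would write the Lagrangian with real multipliers $\lambda_i$ on $X_{i,i}=1$, complex multipliers $\gamma_i$ on $x_i=\sum_j t_{i,j}s_j$, real multipliers $\nu_i$ on $\sum_j t_{i,j}=1$, nonnegative multipliers $\mu_{i,j}$ on $t_{i,j}\ge 0$, and a PSD dual matrix $\mZ\succeq 0$ on the linear matrix inequality $\mX\succeq \vx\vx^\dagger$, and then read off stationarity: in $\mX$ it forces the trailing $n\times n$ block of $\mZ$ to equal $\mQ+\Diag(\vlambda)$; in $\vx$ (with the Wirtinger convention for the term $2\Real(\vc^\dagger\vx)$) it relates $\vgamma$ to the off-diagonal block of $\mZ$ and to $\vc$; in $t_{i,j}$ it gives $\mu_{i,j}=\nu_i-2\Real(\bar\gamma_i s_j)$. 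Because $\mX^*=\vx^*(\vx^*)^\dagger$ is rank one and $\mZ\succeq 0$, complementary slackness on the conic constraint forces $\mZ\begin{bmatrix}1\\\vx^*\end{bmatrix}=\0$, so the off-diagonal block of $\mZ$ equals $-(\mQ+\Diag(\vlambda))\vx^*$, and a short congruence argument reduces the dual PSD requirement to $\mQ+\Diag(\vlambda)\succeq 0$. Substituting $\vc=-\mH^\dagger\vr$ with $\vr=\mH\vx^*+\vv$ into the $\vx$-stationarity then collapses everything to the identity $\gamma_i=(\mH^\dagger\vv)_i-\lambda_i x_i^*$.

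Finally I would translate the remaining $t$-block conditions. Complementary slackness at $\vt^*$ forces $\mu_{i,j(i)}=0$, hence $\nu_i=2\Real(\bar\gamma_i x_i^*)$, and the surviving inequalities $\mu_{i,j}\ge 0$ say exactly that $x_i^*$ maximizes $\Real(\bar\gamma_i s_j)$ over the $M$-PSK alphabet, i.e., $(x_i^*)^{-1}\gamma_i$ lies in the planar cone $\{re^{\imag\psi}:r\ge 0,\ |\psi|\le\pi/M\}$. Here the hypothesis $M\ge 4$ is essential: it makes $\pi/M<\pi/2$, so the cone is pointed and generated by $e^{\pm\imag\pi/M}$, and membership can be written as $(x_i^*)^{-1}\gamma_i=\tfrac{\mu_{i,-1}}{2}e^{-\imag\pi/M}+\tfrac{\mu_{i,1}}{2}e^{\imag\pi/M}$ with $\mu_{i,-1},\mu_{i,1}\ge 0$; combining this with $\gamma_i=(\mH^\dagger\vv)_i-\lambda_i x_i^*$ yields the stated equation, while the dual PSD feasibility is exactly $\mQ+\Diag(\vlambda)\succeq 0$. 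The step I expect to demand the most care is this cone translation: verifying that the whole family $\{\mu_{i,j}\ge 0\}_j$ (not merely the two neighbors of $x_i^*$) is equivalent to two-ray cone membership, and isolating precisely where $M\ge 4$ is used; the sign conventions in the complex $\vx$-stationarity are a secondary bookkeeping hazard.
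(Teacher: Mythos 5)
Your proposal is correct and takes essentially the same route the paper does: the paper states this result as a citation of Lu et al.\ and explicitly describes the method as certifying $(\vx^*,\mX^*)$ via the KKT conditions of \cref{eq:CSDP2}, which is exactly your Slater/stationarity/complementary-slackness argument, including the reduction of dual PSD feasibility to $\mQ+\Diag(\vlambda)\succeq 0$ by congruence and the translation of the $t$-block multipliers into membership of $(x_i^*)^{-1}\gamma_i$ in the pointed cone generated by $e^{\pm\imag\pi/M}$ (where $M\geq 4$ is indeed what makes the two-ray representation valid).
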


The authors in~\cite{tightness_enhanced} further derived the sufficient condition 
\cref{eq:CSDP2_suff}, under which the conditions in \cref{thm:main thm for CSDP2} are met by 
choosing $\lambda_i=-\lambda_{\mathrm{min}}(\mQ)$ for $i=1,2,\ldots,n$. 
To strengthen their analysis, 
we view the conditions in \cref{thm:main thm for CSDP2} as 
a semidefinite feasibility problem. To be specific, if we define 
\begin{equation}\label{eq:def of z}
    z_i = (x_i^*)^{-1}(\mH^\dagger \vv)_i,\;i=1,2,\ldots,n, 
\end{equation}
and 
\begin{equation*}
    \mathcal{C}(\lambda)=\left\{
    z\in \C :\; \exists\; \mu_{-1},\;\mu_{1}\geq 0\;\text{s.t.}\;
    z=\lambda+
    \frac{\mu_{-1}}{2}e^{-\imag\frac{\pi}{M}} + \frac{\mu_{1}}{2}e^{\imag\frac{\pi}{M}}
     \right\},
\end{equation*}
then \cref{thm:main thm for CSDP2} states that 
\cref{eq:CSDP2} is tight if and only if the following problem is feasible:
\begin{equation}\label{eq:feasibility CSDP2}
  \begin{aligned}
  \mathrm{find}\;\;\; & \vlambda\in \R^n\\
  \mathrm{s.t.}\;\;\; &\mQ + \Diag(\vlambda) \succeq 0,\\
   & z_i\in \mathcal{C}(\lambda_i),\;i=1,2,\ldots,n.
  \end{aligned}
\end{equation}
Each constraint $z_i\in \mathcal{C}(\lambda_i)$ turns 
out to be a simple inequality on $\lambda_i$. 
% can be summarized as 
% \begin{equation}\label{eq:summarized thm for CSDP2}
%     \exists\;\vlambda\in \R^n\;\text{s.t.}\;
%     z_i\in \mathcal{C}(\lambda_i)\;\text{and}\;
%     \mQ + \Diag(\vlambda) \succeq 0
%     \;\Leftrightarrow\;\text{\cref{eq:CSDP2} is tight.}
% \end{equation}
\begin{figure}[htbp]
  \centering
  \includegraphics[width=0.7\linewidth]{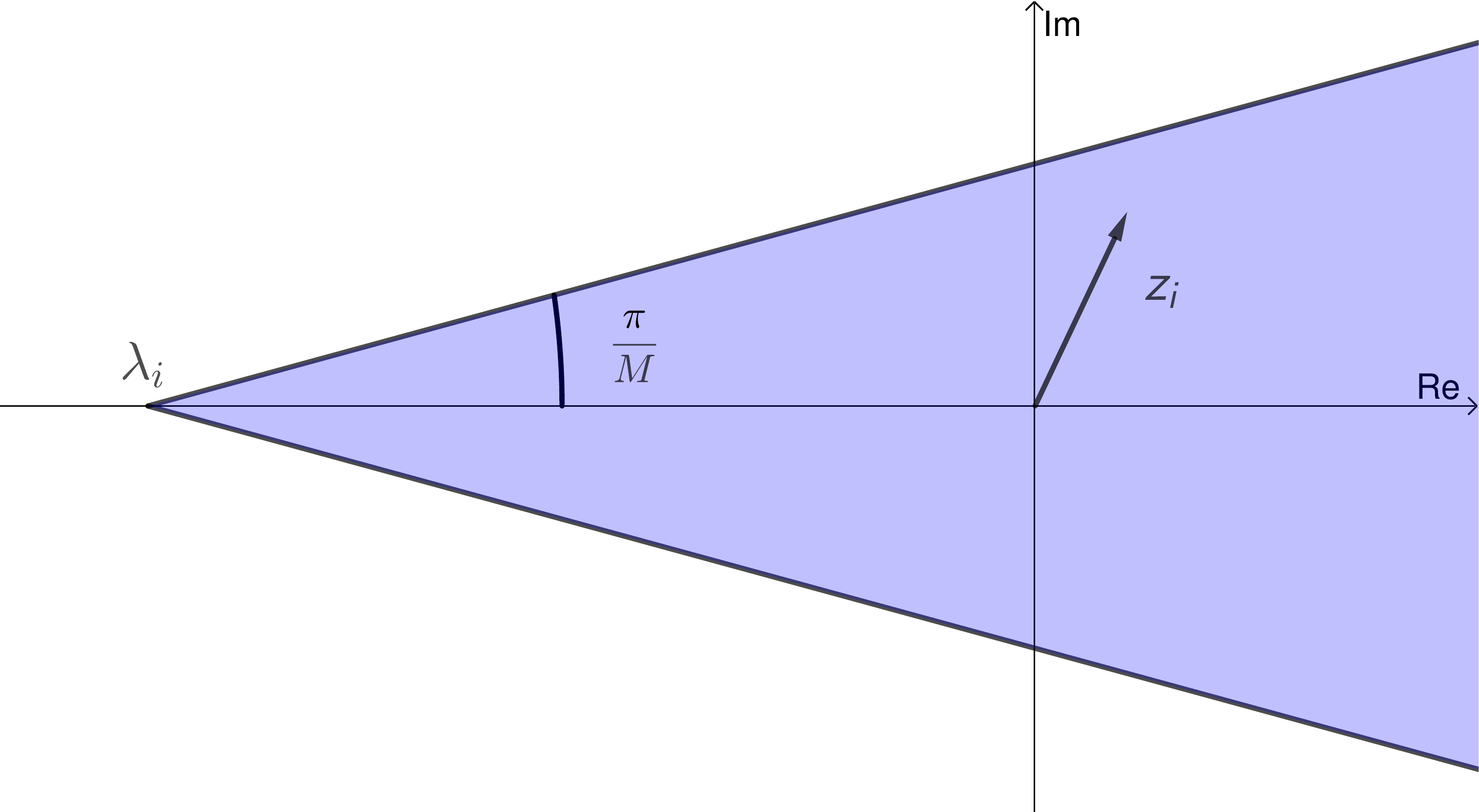}
  \caption{Illustration of $\mathcal{C}(\lambda_i)$ in the complex plane.}
  \label{fig:visual}
\end{figure}
To see this, we plot $\mathcal{C}(\lambda_i)$ as 
the shaded area in \cref{fig:visual}. 
It is clear from the figure that 
\begin{equation*}
    z_i\in \mathcal{C}(\lambda_i)
    \;\Leftrightarrow\;
    |\Imag(z_i)|\leq \big(-\lambda_i+\Real(z_i)\big)\tan\left(\frac{\pi}{M}\right),
\end{equation*}
which leads to
\begin{equation*}
  z_i\in \mathcal{C}(\lambda_i)
    \;\Leftrightarrow\;
    \lambda_i\leq \Real(z_i)-|\Imag(z_i)|\cot\left(\frac{\pi}{M}\right).
\end{equation*}
This, together with \cref{eq:feasibility CSDP2}, gives the necessary and 
sufficient condition for \cref{eq:CSDP2} to be tight and  
we formally state it in \cref{thm:CSDP2_necess_suff}.  
\begin{theorem}\label{thm:CSDP2_necess_suff}
Suppose that $M\geq 4$. Then \cref{eq:CSDP2} is tight if and only if 
\begin{equation}\label{eq:CSDP2_necc_and_suff}
  \mQ + \Diag\big(\mathrm{Re}(\vz)\big)
  -\cot\left(\frac{\pi}{M}\right)\Diag(|\mathrm{Im}(\vz)|\big) \succeq 0,
\end{equation}
where $\vz=[z_1,z_2,\ldots,z_n]^{\T}\in \C^n$.
\end{theorem}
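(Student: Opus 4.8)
The plan is to repackage the discussion preceding the statement into a clean two-way argument whose backbone is the reformulation of tightness as the semidefinite feasibility problem \cref{eq:feasibility CSDP2}. By \cref{thm:main thm for CSDP2}, the relaxation \cref{eq:CSDP2} is tight exactly when there is a vector $\vlambda\in\R^n$ with $\mQ+\Diag(\vlambda)\succeq 0$ and $z_i\in\mathcal{C}(\lambda_i)$ for every $i$, where $z_i$ is as in \cref{eq:def of z}. So the proof reduces to two tasks: (a) rewriting each membership constraint $z_i\in\mathcal{C}(\lambda_i)$ as an explicit scalar inequality on $\lambda_i$, and (b) eliminating the auxiliary vector $\vlambda$ from \cref{eq:feasibility CSDP2}.

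For task (a), I would make the picture in \cref{fig:visual} rigorous. Writing a candidate $z=\lambda_i+w$ with $w=\tfrac{\mu_{-1}}{2}e^{-\imag\pi/M}+\tfrac{\mu_{1}}{2}e^{\imag\pi/M}$, the set of admissible $w$ as $\mu_{-1},\mu_{1}$ range over the nonnegative reals is precisely the closed convex cone generated by the two unit vectors $e^{\pm\imag\pi/M}$. Since $M\ge 4$ forces $\pi/M\le\pi/4<\pi/2$, these generators subtend an angle $2\pi/M<\pi$ and are symmetric about the positive real axis, so their conic hull is exactly $\{w\in\C:\ |\Imag(w)|\le\Real(w)\tan(\pi/M)\}$. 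Hence $z_i\in\mathcal{C}(\lambda_i)$ if and only if $|\Imag(z_i)|\le(\Real(z_i)-\lambda_i)\tan(\pi/M)$; as the right-hand side is automatically nonnegative whenever this holds, the inequality rearranges to $\lambda_i\le\Real(z_i)-|\Imag(z_i)|\cot(\pi/M)$, matching the claim in the text.

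For task (b), the key observation is monotonicity of the Loewner order under the diagonal embedding: $\vlambda\le\vlambda'$ entrywise implies $\mQ+\Diag(\vlambda)\preceq\mQ+\Diag(\vlambda')$, so $\mQ+\Diag(\vlambda)\succeq 0$ implies $\mQ+\Diag(\vlambda')\succeq 0$. Therefore, if \cref{eq:feasibility CSDP2} is feasible at all, then it is feasible at the pointwise-largest admissible vector $\vb$ with $b_i=\Real(z_i)-|\Imag(z_i)|\cot(\pi/M)$; conversely $\vlambda=\vb$ satisfies the inequalities from task (a) with equality, so it is feasible as soon as $\mQ+\Diag(\vb)\succeq 0$. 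Thus \cref{eq:feasibility CSDP2} is feasible if and only if \cref{eq:CSDP2_necc_and_suff} holds, and combining this with \cref{thm:main thm for CSDP2} completes the proof.

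I do not anticipate a real obstacle, since the theorem is in essence a distillation of \cref{thm:main thm for CSDP2}. The only step requiring genuine care is making the ``clear from the figure'' description of $\mathcal{C}(\lambda_i)$ precise: one should invoke $M\ge 4$ to ensure $2\pi/M<\pi$, so that the conic hull of $e^{\pm\imag\pi/M}$ is a pointed, non-degenerate cone and the characterization $|\Imag(w)|\le\Real(w)\tan(\pi/M)$ is an exact equality of sets rather than a mere one-sided inclusion; closedness of the conic hull of two vectors means the edge rays are attained, so no separate boundary case arises.
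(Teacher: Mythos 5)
Your proposal is correct and follows essentially the same route as the paper: reformulate tightness via \cref{thm:main thm for CSDP2} as the feasibility problem \cref{eq:feasibility CSDP2}, convert each membership $z_i\in\mathcal{C}(\lambda_i)$ into the scalar upper bound $\lambda_i\le\Real(z_i)-|\Imag(z_i)|\cot(\pi/M)$, and eliminate $\vlambda$ by taking the pointwise-largest admissible choice. The only difference is that you make explicit the Loewner-monotonicity step that the paper leaves implicit, which is a welcome clarification rather than a deviation.
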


Note that \cref{eq:CSDP2_necc_and_suff} is 
exactly the same as \cref{eq:CSDP2_necc_and_suff_intro} if we recall the 
definitions of $\mQ$ in \cref{eq:def of Q and c} and $z_i$ in \cref{eq:def of z}.
% Two remarks on \cref{thm:CSDP2_necess_suff} are as follows. 
Furthermore, if we set $M=2$ and $\mH,\;\vv$ to be real in \cref{eq:CSDP2_necc_and_suff_intro}, 
it becomes the same as the previous result \cref{eq:real_channel_binary}.  
Hence, our result extends \cref{eq:real_channel_binary} to the more general 
case where $M\geq 4$ and $\mH,\;\vv$ are complex.
% Now we compare \cref{eq:CSDP2_necc_and_suff} with some previous results. 
% In~\cite{tightness_enhanced}, the authors prove that  
% \cref{eq:CSDP2} is tight if 
% \begin{equation}\label{eq:CSDP2_suff}
%   \lambda_{\mathrm{min}}(\mH^\dagger \mH)\sin(\frac{\pi}{M})
%   >\|\mH^\dagger \vv\|_{\infty}.
% \end{equation}
Finally, 
the sufficient 
condition \cref{eq:CSDP2_suff} in~\cite{tightness_enhanced} can be derived from 
our result. 
Since  
\begin{align*}
  \mathrm{Re}(z_i)-|\Imag(z_i)|\cot\left(\frac{\pi}{M}\right) &= \frac{1}{\sin\left(\frac{\pi}{M}\right)}
  \left( \mathrm{Re}(z_i)\sin\left(\frac{\pi}{M}\right)-|\Imag(z_i)|\cos\left(\frac{\pi}{M}\right)\right) \\
  & \geq -\frac{1}{\sin\left(\frac{\pi}{M}\right)} 
  |z_i| % = -\frac{1}{\sin\left(\frac{\pi}{M}\right)} |(\mH^\dagger \vv)_i|
  \geq -\frac{1}{\sin\left(\frac{\pi}{M}\right)} \|\mH^\dagger \vv\|_{\infty},
\end{align*}
we have 
\begin{equation*}
  \Diag\big(\mathrm{Re}(\vz)\big)
  -\cot\left(\frac{\pi}{M}\right)\Diag(|\mathrm{Im}(\vz)|\big) \succeq 
  -\frac{1}{\sin\left(\frac{\pi}{M}\right)} \|\mH^\dagger \vv\|_{\infty} \mI_n.
\end{equation*}
Combining this with 
$\mQ\succeq \lambda_{\mathrm{min}}(\mQ)\mI_n$, we can see that 
\cref{eq:CSDP2_suff} is a stronger condition on 
$\mH$ and $\vv$ than \cref{eq:CSDP2_necc_and_suff_intro}.

% Moreover, 
% for the case where $\mH\in \R^{m\times n}$, $\vv\in \R^m$, and $\vx^*\in \{+1,-1\}^n$, 
% the following necessary and sufficient condition is proposed in~\cite{optimal_condition}: 
% \begin{equation}\label{eq:real_channel_binary}
%   \mH^{\T}\mH+[\Diag(\vx^*)]^{-1}\Diag(\mH^{\T}\vv)\succeq 0
% \end{equation} 

% The key idea is to view the tightness condition in \cref{thm:main thm for CSDP2} as a SDP feasibility problem.

% \todo[disable,inline]{First, repeat the theorem in~\cite{tightness_enhanced}; Second, derive the 
% necessary and sufficient condition.}

\subsection{Tightness of \texorpdfstring{{\cref{eq:ERSDP}}}{(ESDR-Y)}}
Similar to \cref{thm:main thm for CSDP2}, we have the following 
characterization for \cref{eq:ERSDP} to be tight. 
Since the proof technique is essentially the same as that in~\cite{tightness_enhanced}, 
we put the proof in a separate technical report~\cite{companion_report}. 
\begin{theorem}\label{thm:main thm for ERSDP}
  Suppose that $M\geq 4$. Let the transmitted symbol vector $\vx^*$ be 
  \begin{equation*}
    x_i^* =s_{u_i},\;u_i\in\{1,2,\ldots,M\},\;i=1,2,\ldots,n,
  \end{equation*}
  and define
  \begin{align}%{4}
    \hat{\vv} &= 
    \begin{bmatrix}
      \mathrm{Re}(\vv) \\
      \mathrm{Im}(\vv)
    \end{bmatrix}\in \R^{2n},\;
    \hat{\mH} = 
    \begin{bmatrix}
      \mathrm{Re}(\mH) & -\mathrm{Im}(\mH) \\
      \mathrm{Im}(\mH) & \mathrm{Re}(\mH)
    \end{bmatrix}\in \R^{2n\times 2n}, \label{eq:def hat v and hat H}\\
    \vy^* &= 
    \begin{bmatrix}
      \mathrm{Re}(\vx^*) \\
      \mathrm{Im}(\vx^*)
    \end{bmatrix}\in \R^{2n},\;
    \mY^* = \vy^*(\vy^*)^\T
    % \begin{bmatrix}
    %   \mathrm{Re}(\mX^*) & -\mathrm{Im}(\mX^*) \\
    %   \mathrm{Im}(\mX^*) & \mathrm{Re}(\mX^*)
    % \end{bmatrix}
    \in \R^{2n\times 2n}. \nonumber%\label{eq:def y* and Y*}
  \end{align}
  Then $(\vy^*,\mY^*)$ is the optimal solution of \cref{eq:ERSDP} 
  if and only if there exist 
  ${\vlambda} \in \R^{2n}$, ${\vmu}\in \R^{n}$, and 
  ${\vg} \in \R^{2n}$
  that satisfy   
  \begin{gather}
    \hat{\mH}^{\T} \hat{\vv} = {\vg}+({\mLambda}+{\mM})\vy^*, \label{eq:ERSDP equality}\\
    \langle {\mGamma}_i,\mK_{u_i}\rangle 
    \geq \langle {\mGamma}_i,\mK_j\rangle,\;
    j=1,2,\ldots,M,\;i=1,2,\ldots,n, \label{eq:ERSDP inequality}
  \end{gather}
  and 
  \begin{equation*}
    \hat{\mQ}+{\mLambda}+{\mM} \succeq 0, % \label{eq:ERSDP semidefniteness}
  \end{equation*}
  where $\mK_j$ is defined in \cref{eq:def of K}, $\hat{\mQ}$ is defined in \cref{eq:def of hat Q and hat c}, and 
  \begin{equation}\label{eq:def_of_Lambda_M_Gamma}
    {\mLambda}=\Diag({\vlambda}),\;
    {\mM}= 
  \begin{bmatrix}
    \bm{0} & \Diag({\vmu}) \\
    \Diag({\vmu}) & \bm{0}
  \end{bmatrix},\;
  {\mGamma}_i = 
  \begin{bmatrix}
    0 & {g}_i & {g}_{n+i} \\
    {g}_i & {\lambda}_i & {\mu}_i \\
    {g}_{n+i} & {\mu}_i & {\lambda}_{n+i}
  \end{bmatrix}.
  \end{equation}
  % $\bar{\mLambda}=\Diag(\bar{\vlambda})$, 
  % $\bar{\mM}= \bigl[
  % \begin{smallmatrix}
  %   \bm{0} & \Diag(\bar{\vmu}) \\
  %   \Diag(\bar{\vmu}) & \bm{0}
  % \end{smallmatrix}\bigr], 
  % $
  % and 
  % $\bar{\mGamma}_i = \biggl[
  % \begin{smallmatrix}
  %   0 & \bar{g}_i & \bar{g}_{n+i} \\
  %   \bar{g}_i & \bar{\lambda}_i & \bar{\mu}_i \\
  %   \bar{g}_{n+i} & \bar{\mu}_i & \bar{\lambda}_{n+i}
  % \end{smallmatrix}\biggr].$
\end{theorem}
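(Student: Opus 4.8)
The plan is to follow the same KKT-certification strategy that proves \cref{thm:main thm for CSDP2}, now applied to the convex SDP \cref{eq:ERSDP}. First I would check that \cref{eq:ERSDP} satisfies Slater's condition: taking $t_{i,j}=1/M$ for all $i,j$ forces each $\mathcal{Y}(i)$ to equal the barycenter $\tfrac1M\sum_j\mK_j=\Diag(1,\tfrac12,\tfrac12)$ (using that the $M$-PSK points sum to zero and have second moments $\tfrac12$), so $\vy=\0$ and the block-diagonal of $\mY$ is $\tfrac12\mI_{2n}$; completing this to $\mY=\tfrac12\mI_{2n}$ gives $\mY\succ\vy\vy^\T$ together with $t_{i,j}>0$, a strictly feasible point. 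Hence strong duality holds and a primal feasible triple is optimal if and only if it admits a KKT certificate. The candidate is the obvious one, $t_{i,j}^*=1$ for $j=u_i$ and $0$ otherwise, together with $\vy^*$ and $\mY^*=\vy^*(\vy^*)^\T$; since $x_i^*=s_{u_i}$ we get $\mathcal{Y}^*(i)=\mK_{u_i}=\sum_j t_{i,j}^*\mK_j$, so primal feasibility is immediate.

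Next I would form the Lagrangian of \cref{eq:ERSDP}, attaching a symmetric $3\times3$ multiplier to each block equality $\mathcal{Y}(i)=\sum_j t_{i,j}\mK_j$, a scalar $\eta_i$ to $\sum_j t_{i,j}=1$, nonnegative scalars $\nu_{i,j}$ to $t_{i,j}\ge0$, and a PSD multiplier $\mW=\begin{bmatrix}w_0&\vw^\T\\\vw&\mZ\end{bmatrix}$ to the Schur-complement form $\begin{bmatrix}1&\vy^\T\\\vy&\mY\end{bmatrix}\succeq0$ of $\mY\succeq\vy\vy^\T$. The $(1,1)$ entry of the block equality is just the redundant scalar relation $1=\sum_j t_{i,j}$, so its multiplier can be absorbed into $\eta_i$, leaving the $3\times3$ multiplier in precisely the form $\mGamma_i$ of \cref{eq:def_of_Lambda_M_Gamma} with free scalars $g_i,g_{n+i},\lambda_i,\lambda_{n+i},\mu_i$. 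Stationarity then yields: in $\mY$, that $\mZ=\hat{\mQ}+\mLambda+\mM$ with $\mLambda,\mM$ as in \cref{eq:def_of_Lambda_M_Gamma} (only $Y_{i,i},Y_{i,n+i},Y_{n+i,n+i}$ appear, with coefficients $\lambda_i,\mu_i,\lambda_{n+i}$); in $\vy$, that $\vw=\hat{\vc}+\vg$; and in the cost-free variables $t_{i,j}$, that $\nu_{i,j}=\eta_i-\langle\mGamma_i,\mK_j\rangle$.

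The last step is to convert the remaining dual feasibility and complementary slackness into the stated conditions. The primal PSD slack at the candidate is $\begin{bmatrix}1\\\vy^*\end{bmatrix}\begin{bmatrix}1\\\vy^*\end{bmatrix}^\T$, which has rank one, so complementary slackness forces $\mW\begin{bmatrix}1\\\vy^*\end{bmatrix}=\0$, i.e. $\vw=-\mZ\vy^*$ and $w_0=(\vy^*)^\T\mZ\vy^*$; then $\mW=\begin{bmatrix}-\vy^{*\T}\\\mI_{2n}\end{bmatrix}\mZ\begin{bmatrix}-\vy^*&\mI_{2n}\end{bmatrix}$, so $\mW\succeq0\Leftrightarrow\mZ\succeq0$, which is exactly the asserted condition $\hat{\mQ}+\mLambda+\mM\succeq0$. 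Combining $\vw=-(\hat{\mQ}+\mLambda+\mM)\vy^*$ with $\vw=\hat{\vc}+\vg$ and the identity $\hat{\vc}=-\hat{\mQ}\vy^*-\hat{\mH}^\T\hat{\vv}$ — which follows from $\vc=-\mH^\dagger\vr$ and $\vr=\mH\vx^*+\vv$ in \cref{eq:mimo} passed through the real embeddings of \cref{eq:def of hat Q and hat c} and \cref{eq:def hat v and hat H} — collapses to the equality \cref{eq:ERSDP equality}. Finally, complementary slackness $\nu_{i,u_i}t_{i,u_i}^*=0$ with $t_{i,u_i}^*=1$ gives $\nu_{i,u_i}=0$, hence $\eta_i=\langle\mGamma_i,\mK_{u_i}\rangle$, and the dual feasibility $\nu_{i,j}\ge0$ reads $\langle\mGamma_i,\mK_{u_i}\rangle\ge\langle\mGamma_i,\mK_j\rangle$, which is \cref{eq:ERSDP inequality}. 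The step I expect to require the most care is the bookkeeping around the PSD multiplier — the equivalence $\mW\succeq0\Leftrightarrow\mZ\succeq0$ when $\mZ$ may be singular, and keeping the $(1,1)$ block of $\mathcal{Y}(i)$ cleanly separated from $\sum_j t_{i,j}=1$ so that $\mGamma_i$ ends up with exactly the form in \cref{eq:def_of_Lambda_M_Gamma}; the rest is routine and mirrors the proof of \cref{thm:main thm for CSDP2}, with $M\ge4$ entering at the same point (keeping the $M$-PSK convex-hull/angle structure nondegenerate).
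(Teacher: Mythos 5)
Your proposal is correct and follows essentially the same route the paper takes (the proof is deferred to the companion technical report, but the paper states it certifies $(\vy^*,\mY^*)$ via the KKT conditions of the convex SDP exactly as in \cref{thm:main thm for CSDP2}): Slater via the barycenter point, stationarity identifying $\mZ=\hat{\mQ}+\mLambda+\mM$ and $\vw=\hat{\vc}+\vg$, and complementary slackness against the rank-one primal slack to reduce $\mW\succeq 0$ to $\hat{\mQ}+\mLambda+\mM\succeq 0$ and to produce \cref{eq:ERSDP equality,eq:ERSDP inequality}. The bookkeeping you flag (absorbing the $(1,1)$ entry of the block equality into $\eta_i$ so the multiplier has the form $\mGamma_i$, and the full-column-rank factorization giving $\mW\succeq0\Leftrightarrow\mZ\succeq0$) all checks out.
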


Furthermore, \cref{eq:ERSDP equality,eq:ERSDP inequality} in 
\cref{thm:main thm for ERSDP} can be simplified to 
the following inequalities on $(\vlambda,\vmu)$ (see \cref{appen:simplification}):
\begin{equation}\label{eq:simplified inequality}
\begin{aligned}
  \sin^2\biggl(&\theta_{u_i}+\frac{\Delta\theta_j}{2}\biggr)\lambda_i+
  \cos^2\left(\theta_{u_i}+\frac{\Delta\theta_j}{2}\right)\lambda_{n+i}
  -
  \sin\left(2\theta_{u_i}+\Delta\theta_j\right)\mu_j \\
  &\leq \Real(z_i)-\cot\left(\frac{\Delta\theta_j}{2}\right)\Imag(z_i),\;
  j\in\{1,2,\ldots,M\}\backslash\{u_i\},\;i=1,2,\ldots,n.
\end{aligned}
\end{equation}
Here $\Delta \theta_j=\theta_j-\theta_{u_i}$, 
$\theta_{u_i}$ is the phase of the $i$-th transmitted symbol $x_i^*$, 
and $z_i$ is defined in \cref{eq:def of z}. 
% Our key idea is to formulate the tightness condition as 
% a semidefinite feasibility problem: 
Similar to \cref{eq:feasibility CSDP2}, we formulate the conditions 
in \cref{thm:main thm for ERSDP} as a semidefinite feasibility problem as follows: 
\begin{equation}\label{eq:ERSDP feasibility}
  \begin{aligned}
      \mathrm{find}\;\;\; & \vlambda\in \R^{2n}\;\text{and}\;\vmu\in \R^n \\
      \mathrm{s.t.}\;\;\; & \hat{\mQ}+\mLambda+\mM\succeq 0,\\
      & \text{\cref{eq:simplified inequality} is satisfied,}
      % \;j\in\{1,2,\ldots,M\}\backslash\{u_i\},\;i=1,2,\ldots,n.\\
  \end{aligned}
\end{equation}
where $\mLambda$ and $\mM$ are defined in \cref{eq:def_of_Lambda_M_Gamma}. 
However, unlike problem \cref{eq:feasibility CSDP2} where every inequality 
only involves one dual variable, problem \cref{eq:ERSDP feasibility} has 
inequalities with three variables coupled together and it is unclear how to choose the  
``optimal'' $\vlambda$ and $\vmu$. In the following, we give 
a simple necessary condition for \cref{eq:ERSDP} being tight 
based on \cref{eq:ERSDP feasibility}.
\begin{theorem}\label{thm:necess of ERSDP}
  Suppose that $M\geq 4$.
  If \cref{eq:ERSDP} is tight, then 
  \begin{equation}\label{eq:ERSDP necessary}
    \mQ + \Diag\big(\mathrm{Re}(\vz)\big)
    -\cot\left(\frac{2\pi}{M}\right)\Diag(|\mathrm{Im}(\vz)|\big) \succeq 0,
  \end{equation}
  where $\mQ=\mH^\dagger\mH$ and $\vz$ is defined in \cref{eq:def of z}. 
\end{theorem}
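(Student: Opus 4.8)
The plan is to start from the feasibility problem~\cref{eq:ERSDP feasibility}, which by \cref{thm:main thm for ERSDP} is solvable exactly when \cref{eq:ERSDP} is tight, and then to extract the complex PSD inequality~\cref{eq:ERSDP necessary} from the real dual certificate $\hat{\mQ}+\mLambda+\mM\succeq0$ by ``averaging out'' the off-diagonal freedom in the multipliers. Assume \cref{eq:ERSDP} is tight and fix a feasible $(\vlambda,\vmu)$ for \cref{eq:ERSDP feasibility}. For each $i$, let $\Lambda_i\in\R^{2\times2}$ be the symmetric matrix with diagonal entries $\lambda_i,\lambda_{n+i}$ and off-diagonal entry $\mu_i$ (so $\Lambda_i$ is the lower-right $2\times2$ block of $\mGamma_i$ in \cref{eq:def_of_Lambda_M_Gamma}), set $\tau_i:=\tfrac12(\lambda_i+\lambda_{n+i})=\tfrac12\Tr(\Lambda_i)$, and keep $z_i$ as in \cref{eq:def of z}.

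First I would rewrite the certificate in complex form. Using the block forms in \cref{eq:def_of_Lambda_M_Gamma} and the fact that the $\hat{\mQ}$-quadratic form evaluated at the real embedding $(\Real\vw,\Imag\vw)$ of a vector $\vw\in\C^n$ equals $\vw^\dagger\mQ\vw$, the constraint $\hat{\mQ}+\mLambda+\mM\succeq0$ is equivalent to
\begin{equation*}
  \vw^\dagger\mQ\vw+\sum_{i=1}^{n}(\Real w_i,\,\Imag w_i)\,\Lambda_i\,(\Real w_i,\,\Imag w_i)^{\T}\;\ge\;0\qquad\text{for all }\vw\in\C^n.
\end{equation*}
Replacing $\vw$ by $e^{\imag\psi}\vw$ leaves $\vw^\dagger\mQ\vw$ and each $|w_i|$ unchanged and rotates the planar vector $(\Real w_i,\Imag w_i)^{\T}$ through the angle $\psi$; integrating the resulting inequality over $\psi\in[0,2\pi]$ and using $\frac1{2\pi}\int_0^{2\pi}(\cos\psi,\sin\psi)\Lambda_i(\cos\psi,\sin\psi)^{\T}\,d\psi=\tfrac12\Tr(\Lambda_i)=\tau_i$ collapses the $i$-th summand to $\tau_i|w_i|^2$, so that $\mQ+\Diag(\vtau)\succeq0$ with $\vtau=(\tau_1,\dots,\tau_n)^{\T}$.

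Next I would show $\tau_i\le\Real z_i-\cot(2\pi/M)|\Imag z_i|$ for every $i$ from \cref{eq:simplified inequality}. The key observation is that, as a function of $j$, the left-hand side of \cref{eq:simplified inequality} equals $\vq(\psi)^{\T}\Lambda_i\vq(\psi)$, where $\vq(\psi):=(\sin\psi,-\cos\psi)^{\T}$ and $\psi=\theta_{u_i}+\Delta\theta_j/2=\theta_{u_i}+\pi(j-u_i)/M$. Since $\vq(\psi)$ and $\vq(\psi+\pi/2)$ form an orthonormal basis of $\R^2$, adding the two instances of \cref{eq:simplified inequality} whose arguments $\psi$ differ by exactly $\pi/2$ makes the left-hand side collapse to $\Tr(\Lambda_i)=2\tau_i$. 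Taking $j-u_i=1$ together with $j'-u_i=1+M/2$ (both admissible modulo $M$ because $M\ge4$) and using $\cot\alpha-\tan\alpha=2\cot2\alpha$ gives $\tau_i\le\Real z_i-\cot(2\pi/M)\Imag z_i$, while $j-u_i=M/2-1$ together with $j'-u_i=M-1$ gives the mirror bound $\tau_i\le\Real z_i+\cot(2\pi/M)\Imag z_i$; the two together give the claimed bound. Finally, since $\Real z_i-\cot(2\pi/M)|\Imag z_i|\ge\tau_i$ for every $i$ and $\mQ+\Diag(\vtau)\succeq0$, we obtain
\begin{equation*}
  \mQ+\Diag(\Real\vz)-\cot\bigl(\tfrac{2\pi}{M}\bigr)\Diag(|\Imag\vz|)\;\succeq\;\mQ+\Diag(\vtau)\;\succeq\;0,
\end{equation*}
which is \cref{eq:ERSDP necessary} since $\mQ=\mH^\dagger\mH$.

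The main obstacle is conceptual rather than computational: one must resist the temptation to strengthen the intermediate claim to $\Lambda_i\preceq\bigl(\Real z_i-\cot(2\pi/M)|\Imag z_i|\bigr)\mI_2$, i.e.\ to try to bound $\lambda_{\max}(\Lambda_i)$ directly. That stronger statement is \emph{false} in general, because \cref{eq:simplified inequality} pins $\Lambda_i$ down only along finitely many directions and leaves $\lambda_{\max}(\Lambda_i)$ essentially free. What is genuinely controlled --- and exactly what survives both the rotational averaging of the first step and the orthogonal-pair summation of the second --- is the \emph{trace} $\Tr(\Lambda_i)$; once this is recognized, the rest is elementary trigonometry, and the hypothesis $M\ge4$ enters only to guarantee that the index pairs $(j,j')$ in the second step are legitimate (distinct from $u_i$ modulo $M$).
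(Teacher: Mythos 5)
Your proof is correct and follows essentially the same route as the paper's: you isolate the trace quantity $\tfrac12(\lambda_i+\lambda_{n+i})$ and bound it by adding the two instances of \cref{eq:simplified inequality} at $\Delta\theta_j\in\{2\pi/M,\,2\pi/M+\pi\}$ (or their negatives), exactly as the paper does, using the same identity $\cot\alpha-\tan\alpha=2\cot 2\alpha$. The only cosmetic difference is in the real-to-complex reduction: where the paper applies the explicit congruence $\tfrac12[\mI_n\;\;\imag\mI_n](\hat{\mQ}+\mLambda+\mM)[\mI_n\;\;\imag\mI_n]^\dagger$ of \cref{lem:real semidefinite to complex semidefinite}, you average the quadratic form over the phase rotations $e^{\imag\psi}$ --- both yield the same matrix $\mQ+\Diag(\vtau)\succeq 0$.
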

Before proving \cref{thm:necess of ERSDP}, we first introduce the following lemma.
\begin{lemma}\label{lem:real semidefinite to complex semidefinite}
  Suppose that $\mV$ is a PSD matrix in $\R^{2n}$ and is partitioned as  
  \begin{equation*}
    \mV = 
    \begin{bmatrix}
      \mA & \mB \\
      \mB^{\T} & \mC
    \end{bmatrix},
  \end{equation*}
  where $\mA=\mA^{\T}$, $\mC=\mC^{\T}$, and $\mA,\mB,\mC\in \R^{n\times n}$. Then 
  \begin{equation*}
    \mU = \frac{1}{2}(\mA+\mC)+\frac{\imag}{2}(\mB^{\T}-\mB)
  \end{equation*}
  is a PSD matrix in $\C^n$.
\end{lemma}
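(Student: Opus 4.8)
The plan is to realize $\mU$ as a complex congruence of $\mV$, from which positive semidefiniteness is immediate. Introduce the rectangular matrix $\mP = \begin{bmatrix}\mI_n \\ -\imag\mI_n\end{bmatrix} \in \C^{2n\times n}$, so that $\mP^\dagger = \begin{bmatrix}\mI_n & \imag\mI_n\end{bmatrix}$. The first step is to check, by a one-line block multiplication using only $\mA^\T=\mA$ and $\mC^\T=\mC$, that $\mP^\dagger \mV \mP = (\mA+\mC) + \imag(\mB^\T - \mB) = 2\mU$. In particular this exhibits $\mU = \frac{1}{2}\mP^\dagger\mV\mP$, which also makes $\mU = \mU^\dagger$ transparent.

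The second step is the elementary observation that a real symmetric positive semidefinite matrix is also positive semidefinite as a complex Hermitian matrix: for $\vz = \va + \imag\vb$ with $\va,\vb\in\R^{2n}$, one has $\vz^\dagger\mV\vz = \va^\T\mV\va + \vb^\T\mV\vb + \imag(\va^\T\mV\vb - \vb^\T\mV\va)$, and the symmetry of $\mV$ forces the imaginary part to vanish, leaving $\va^\T\mV\va + \vb^\T\mV\vb \ge 0$.

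Combining the two steps: for any $\vw\in\C^n$, applying the second step to $\vz = \mP\vw \in \C^{2n}$ gives $\vw^\dagger\mU\vw = \frac{1}{2}(\mP\vw)^\dagger\mV(\mP\vw) \ge 0$, hence $\mU\succeq 0$. There is essentially no obstacle here; the only place requiring care is the bookkeeping of conjugate transposes when verifying $\mP^\dagger\mV\mP = 2\mU$. If one prefers to avoid complex congruences altogether, the same argument unpacks coordinatewise: writing $\vw=\vp+\imag\vq$ with $\vp,\vq\in\R^n$ and using $\mU=\mU^\dagger$, one checks directly that $\vw^\dagger\mU\vw = \frac{1}{2}(\vu_1^\T\mV\vu_1 + \vu_2^\T\mV\vu_2)$ with $\vu_1 = \begin{bmatrix}\vp\\\vq\end{bmatrix}$ and $\vu_2 = \begin{bmatrix}\vq\\-\vp\end{bmatrix}$ in $\R^{2n}$, and then invokes $\mV\succeq 0$; the matching of the two sides is the only (routine) computation.
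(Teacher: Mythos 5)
Your proposal is correct and is essentially the paper's own proof: the paper also writes $\mU = \tfrac{1}{2}\begin{bmatrix}\mI_n & \imag\mI_n\end{bmatrix}\mV\begin{bmatrix}\mI_n \\ -\imag\mI_n\end{bmatrix}$ and concludes immediately. You merely spell out the (correct) intermediate observation that a real symmetric PSD matrix is PSD as a complex Hermitian matrix, which the paper leaves implicit.
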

\begin{proof}
  We observe that 
  \begin{equation*}
    \mU = \frac{1}{2}\begin{bmatrix}
      \mI_n & \imag \mI_n
    \end{bmatrix} 
    \begin{bmatrix}
      \mA & \mB \\
      \mB^{\T} & \mC
    \end{bmatrix}
    \begin{bmatrix}
      \mI_n \\ -\imag \mI_n
    \end{bmatrix}.
  \end{equation*}
  The result follows immediately. 
\end{proof}
\begin{proof}[Proof of \cref{thm:necess of ERSDP}]
  If \cref{eq:ERSDP} is tight, we can find 
  ${\vlambda}\in \R^{2n}$ and ${\vmu}\in \R^n$ that satisfy the 
  constraints in \cref{eq:ERSDP feasibility}. 
  By \cref{lem:real semidefinite to complex semidefinite}, 
  the constraint $\hat{\mQ}+\mLambda+\mM\succeq 0$ implies 
  \begin{equation}\label{eq:semidefinite blambda}
    \mQ+\Diag(\bvlambda) \succeq 0, 
  \end{equation}
  where $\bvlambda\in \R^n$ is given by 
  \begin{equation*}
    \blambda_i = \frac{1}{2}(\lambda_i+\lambda_{n+i}),\;i=1,2,\ldots,n.
  \end{equation*}
  Fix $i\in\{1,2,\ldots,n\}$ and let $\hat{z}_i$ and $\hat{z}_{n+i}$ 
  denote $\Real(z_i)$ and $\Imag(z_i)$, respectively. 
  If $\hat{z}_{n+i}\geq 0$, we
  set $\Delta \theta_j=\frac{2\pi}{M}$ 
  in \cref{eq:simplified inequality} to get 
  \begin{equation*}
    \sin^2\Bigl(\theta_{u_i}+\frac{\pi}{M}\Bigr)\lambda_i+
    \cos^2\Bigl(\theta_{u_i}+\frac{\pi}{M}\Bigr)\lambda_{n+i}-
    \sin\Bigl(2\theta_{u_i}+\frac{2\pi}{M}\Bigr)\mu_j
    \leq \hat{z}_i-\cot\Bigl(\frac{\pi}{M}\Bigr)|\hat{z}_{n+i}|.
  \end{equation*}
  Since $M\geq 4$, we can also set 
  $\Delta \theta_j=\frac{2\pi}{M}+\pi$ to get 
  \begin{equation*}
    \cos^2\Bigl(\theta_{u_i}+\frac{\pi}{M}\Bigr)\lambda_i+
    \sin^2\Bigl(\theta_{u_i}+\frac{\pi}{M}\Bigr)\lambda_{n+i}+
    \sin\Bigl(2\theta_{u_i}+\frac{2\pi}{M}\Bigr)\mu_j
    \leq \hat{z}_i+\tan\Bigl(\frac{\pi}{M}\Bigr)|\hat{z}_{n+i}|.
  \end{equation*}
  Adding the above two inequalities and dividing both sides by two, 
  we have 
  \begin{equation}\label{eq:inequality on blambda}
    \blambda_i=\frac{1}{2}(\lambda_i+\lambda_{n+i})\leq \hat{z}_i-\cot\left(\frac{2\pi}{M}\right)|\hat{z}_{n+i}|.
  \end{equation}
  If $\hat{z}_{n+i}<0$, we can also arrive at \cref{eq:inequality on blambda} 
  by setting 
  $\Delta\theta_j$ to be $-\frac{2\pi}{M}$ and $-\frac{2\pi}{M}+\pi$, respectively. 
  Finally, \cref{thm:necess of ERSDP} follows from \cref{eq:semidefinite blambda} 
  and \cref{eq:inequality on blambda}.
\end{proof}

Note that \cref{eq:ERSDP necessary} is the same as \cref{eq:ERSDP_necc_intro} if we 
recall the definitions of $\mQ$ in \cref{eq:def of Q and c} and $z_i$ in \cref{eq:def of z}. 
Moreover, since \cref{eq:ERSDP} is tighter than \cref{eq:CSDP2}, 
\cref{eq:ERSDP} will also be tight if \cref{eq:CSDP2_necc_and_suff_intro} holds. Therefore, we 
have both a necessary condition \cref{eq:ERSDP_necc_intro} 
and a sufficient condition \cref{eq:CSDP2_necc_and_suff_intro} for \cref{eq:ERSDP} to be tight. 
% \todo[inline, disable]{Here we omit the sufficient condition for sveral reasons: 
% First, \cref{eq:CSDP2_necc_and_suff} is the necessary and sufficient condition 
% for \cref{eq:CSDP2}, and hence it is naturally a sufficient condition for 
% \cref{eq:ERSDP}. Second, the derivation is a bit tedious and needs to introduce 
% a new concept called rotational invariance.}
\subsection{Tightness of \texorpdfstring{{\cref{eq:Model II}}}{(ESDR1-T)}}
In the same spirit, 
we first give a necessary and sufficient condition for \cref{eq:Model II} to be tight. 
Since the technique is essentially the same as that used in \cref{thm:main thm for ERSDP}, we omit the 
proof details due to the space limitation. 
\begin{theorem} \label{thm:main thm for Model II}
  Suppose that $M\geq 4$. Let the transmitted symbol vector $\vx^*$ be 
  \begin{equation*}
    x_i^* =s_{u_i},\;u_i\in\{1,2,\ldots,M\},\;i=1,2,\ldots,n,
  \end{equation*}
  % Suppose the transmitted symbols are $x^*_i=s_{u_i}$ 
  % and let $(\vt^{*},\mT^{*})$ satisfy 
  and define 
\begin{gather*}
  \vt^*_{i,u_i} = 1,\; \vt^*_{i,j} = 0,\;j\neq u_i,\;i=1,2,\ldots,n, \\
  \mT^* = \vt^* (\vt^*)^{\T}. \label{eq: rank 1}
\end{gather*}
Then $(\vt^{*},\mT^{*})$ is the optimal solution of \eqref{eq:Model II}
if and only if there exist ${\valpha}\in \mathbb{R}^n$ and ${{\vgamma}}\in\mathbb{R}^{Mn}$ such that 
\begin{equation}\label{eq:Model II equality}
  \Diag(1-2\vt^*){\vgamma}=-2{\hat{\mS}}^{\T}\hat{\mH}^{\T}\hat{\vv}+{\valpha}\otimes \1_M,
\end{equation}
and 
\begin{equation}\label{eq:Model II semidefiniteness}
  % {\hat{\mS}}^{\T}\mhQ{\hat{\mS}}+\Diag({{{\vgamma}}})\succeq 0.
  \bar{\mQ}+\Diag({{{\vgamma}}})\succeq 0.
\end{equation}
where  $\hat{\mS}$ is defined in 
\cref{eq:def_of_S_hat},  
$\hat{\mH},\;\hat{\vv}$ are defined in \cref{eq:def hat v and hat H}, 
and $\bar{\mQ}$ is defined in \cref{eq:def_of_bar_Q_bar_c}. 

\end{theorem}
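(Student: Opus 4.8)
\emph{Proof idea.} The plan is to run the same Lagrangian/KKT certificate argument that underlies \cref{thm:main thm for CSDP2} and \cref{thm:main thm for ERSDP}. Since \cref{eq:Model II} is a convex semidefinite program, once a constraint qualification holds its KKT system is necessary and sufficient for optimality, so the task splits into (i) checking Slater's condition, (ii) writing the KKT conditions at the candidate point $(\vt^{*},\mT^{*})$, and (iii) massaging them into the two displayed conditions. For (i) I would exhibit the strictly feasible point $t_{i,j}=1/M$ together with $\mT=\vt\vt^{\T}+\mD$, where $\mD$ is the positive diagonal matrix that restores $\diag(\mT_{i,i})=\vt_i$; then the lifted matrix $\begin{bmatrix}1 & \vt^{\T}\\ \vt & \mT\end{bmatrix}$ is positive definite (a rank-one matrix plus a block whose trailing block is strictly positive definite). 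A useful preliminary observation is that the constraints $t_{i,j}\ge 0$ are redundant in \cref{eq:Model II}, since $\diag(\mT_{i,i})=\vt_i$ and $\mT\succeq\vt\vt^{\T}$ already force $t_{i,j}\ge t_{i,j}^{2}$ and hence $t_{i,j}\in[0,1]$; dropping them removes the corresponding nonnegative multipliers, which is exactly why $\valpha$ and $\vgamma$ appear without sign constraints in \cref{eq:Model II equality}.

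For (ii), I would attach a multiplier $\alpha_i\in\R$ to each equality $\sum_j t_{i,j}=1$, a vector $\vgamma\in\R^{Mn}$ of multipliers to the scalar equalities making up $\diag(\mT_{i,i})=\vt_i$, and a multiplier $\mZ\in\semiS^{Mn+1}$ to the lifted constraint. Stationarity in $\mT$ forces the trailing $Mn\times Mn$ block of $\mZ$ to be $\bar{\mQ}+\Diag(\vgamma)$ (the diagonal-block equalities contribute only diagonal terms, and I absorb an overall sign into $\vgamma$), while stationarity in $\vt$ gives a linear identity relating $\bar{\vc}$, $\valpha\otimes\1_M$, $\vgamma$, and the off-diagonal block of $\mZ$. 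Complementary slackness is the key simplifier: because $\mT^{*}=\vt^{*}(\vt^{*})^{\T}$, the lifted matrix at $(\vt^{*},\mT^{*})$ equals $\begin{bmatrix}1\\\vt^{*}\end{bmatrix}\begin{bmatrix}1\\\vt^{*}\end{bmatrix}^{\T}$, so $\mZ\begin{bmatrix}1\\\vt^{*}\end{bmatrix}=\vzero$; this kernel identity pins the off-diagonal block of $\mZ$ to $-(\bar{\mQ}+\Diag(\vgamma))\vt^{*}$ and removes it from the $\vt$-stationarity relation.

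Substituting, and using the elementary identity $\bar{\vc}=-\bar{\mQ}\vt^{*}-\hat{\mS}^{\T}\hat{\mH}^{\T}\hat{\vv}$ (which follows from $\bar{\vc}=\hat{\mS}^{\T}\hat{\vc}$, $\hat{\vc}=-\hat{\mQ}\vy^{*}-\hat{\mH}^{\T}\hat{\vv}$, and $\vy^{*}=\hat{\mS}\vt^{*}$), the $\vt$-stationarity relation collapses to $\Diag(\1-2\vt^{*})\vgamma=-2\hat{\mS}^{\T}\hat{\mH}^{\T}\hat{\vv}+\valpha\otimes\1_M$, i.e.\ \cref{eq:Model II equality}; the sign pattern of $\1-2\vt^{*}$ (value $-1$ at the support coordinate $j=u_i$ and $+1$ elsewhere) is precisely the footprint of the kernel condition, and also encodes complementary slackness for $t^{*}_{i,u_i}=1$ versus $t^{*}_{i,j}=0$. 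Finally, the congruence $\begin{bmatrix}1 & \vzero^{\T}\\ \vt^{*} & \mI\end{bmatrix}^{\T}\mZ\begin{bmatrix}1 & \vzero^{\T}\\ \vt^{*} & \mI\end{bmatrix}$ reduces $\mZ$ to the block-diagonal matrix with blocks $0$ and $\bar{\mQ}+\Diag(\vgamma)$, so $\mZ\succeq 0$ is equivalent to \cref{eq:Model II semidefiniteness}. The converse direction just reverses these steps: given $\valpha,\vgamma$ solving \cref{eq:Model II equality} and \cref{eq:Model II semidefiniteness}, reconstruct $\mZ$ from its trailing block through the kernel identity, verify $\mZ\succeq 0$ by the same congruence, check primal feasibility of $(\vt^{*},\mT^{*})$ and the remaining KKT equations, and conclude that $(\vt^{*},\mT^{*})$ is optimal.

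The hard part is organizational rather than conceptual: one must carefully track the $n$-fold block/Kronecker structure (blocks of size $M$), keep the multiplier signs consistent between the $\mT$- and $\vt$-stationarity equations, and confirm that the kernel identity truly purges the off-diagonal block of $\mZ$ so that only $\vgamma$ survives on the left-hand side of \cref{eq:Model II equality}. A secondary point to check, as in \cref{thm:main thm for ERSDP} (where the assumption $M\ge 4$ is invoked), is that strong duality actually applies and that the redundancy of $t_{i,j}\ge 0$ is what leaves $\vgamma$ free of sign restrictions. For these reasons I would, like the authors, defer the full bookkeeping to the companion report~\cite{companion_report} rather than present it inline.
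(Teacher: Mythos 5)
Your proposal is correct and follows essentially the same route the paper takes (and defers to its companion report): certify the rank-one point via the KKT system of the convex SDP, use complementary slackness $\mZ\bigl[\begin{smallmatrix}1\\\vt^*\end{smallmatrix}\bigr]=\0$ to eliminate the off-diagonal block of the dual matrix, and reduce positive semidefiniteness of $\mZ$ to \cref{eq:Model II semidefiniteness} by congruence. The only cosmetic discrepancy is the sign in front of $\valpha\otimes\1_M$, which is immaterial since $\valpha$ is unrestricted; your observations that $t_{i,j}\ge 0$ is implied by $\diag(\mT_{i,i})=\vt_i$ and $\mT\succeq\vt\vt^{\T}$, and that Slater holds at $t_{i,j}=1/M$, correctly justify the absence of sign-constrained multipliers.
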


Now we provide a corollary that will serve as our basis for further derivation.  
% useful in deriving necessary conditions for \cref{eq:Model II} to be tight. 
\begin{corollary} \label{coro:Model II necessary condition}
  If \eqref{eq:Model II} is tight, then there exist 
  \begin{equation*}
    {\valpha}\in \R^n\;\text{and}\;{\vgamma}_1,{\vgamma}_2,\ldots,{\vgamma}_n\in \R^{M}
  \end{equation*}
  that satisfy
  \begin{equation}\label{eq:Model II equality explicit}
    {\gamma}_{i,j} = 
    \begin{cases}
      -2 \mathrm{Re}[s_j^\dagger (\mH^\dagger \vv)_i]+{\alpha}_i,\;\;\; &\text{if}\;j\neq u_i, \\
      2 \mathrm{Re}[s_j^\dagger (\mH^\dagger \vv)_i]-{\alpha}_i,\;\;\; &\text{if}\;j=u_i,
    \end{cases}
    \;j=1,2,\ldots,M,\;i=1,2,\ldots,n,
  \end{equation}
and 
\begin{equation*}
  \vw^{\T}\Diag({\vgamma_i})\vw \geq 0,\;\;i=1,2,\ldots,n,
\end{equation*} 
for any $\vw\in \R^M$ such that 
$
  \vw^{\T}\vs_{R}=\vw^{\T}\vs_{I}=0
$.
% \begin{equation}\label{eq:Model II semidefinite explicit}
%   \forall \vw\in \R^M\;\mathrm{s.t.}\;
%   \vw^{\T}\vs_{R}=\vw^{\T}\vs_{I}=0,\;
%   \vw^{\T}\Diag({\vgamma_i})\vw \geq 0,\;\;i=1,2,\ldots,n.
% \end{equation}
  
\end{corollary}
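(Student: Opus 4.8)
The plan is to derive the corollary as a direct specialization of \cref{thm:main thm for Model II}: assume tightness, invoke the theorem to obtain the certificate $(\valpha,\vgamma)$, and then unpack the equality \cref{eq:Model II equality} entrywise and the semidefiniteness \cref{eq:Model II semidefiniteness} by restricting it to suitable sparse test vectors.

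First I would note that tightness of \cref{eq:Model II} means exactly that the rank-one pair $(\vt^*,\mT^*)$ of \cref{thm:main thm for Model II} is optimal, so the theorem supplies $\valpha\in\R^n$ and $\vgamma\in\R^{Mn}$ satisfying \cref{eq:Model II equality} and \cref{eq:Model II semidefiniteness}. I write $\vgamma$ as the concatenation of blocks $\vgamma_1,\dots,\vgamma_n\in\R^M$ and let $\alpha_i$ be the $i$-th entry of $\valpha$.

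Next I would read \cref{eq:Model II equality} componentwise. Because $t^*_{i,j}=1$ when $j=u_i$ and $t^*_{i,j}=0$ otherwise, the matrix $\Diag(1-2\vt^*)$ has entry $-1$ in the slot indexed by $(i,u_i)$ and $+1$ in every other slot, so the $(i,j)$ component of \cref{eq:Model II equality} gives $\gamma_{i,j}=-2(\hat{\mS}^\T\hat{\mH}^\T\hat{\vv})_{i,j}+\alpha_i$ for $j\neq u_i$ and $\gamma_{i,j}=2(\hat{\mS}^\T\hat{\mH}^\T\hat{\vv})_{i,j}-\alpha_i$ for $j=u_i$. It then remains to identify $(\hat{\mS}^\T\hat{\mH}^\T\hat{\vv})_{i,j}$ with $\Real[s_j^\dagger(\mH^\dagger\vv)_i]$: from \cref{eq:def hat v and hat H} one checks that $\hat{\mH}^\T\hat{\vv}$ is the real/imaginary stacking of $\mH^\dagger\vv\in\C^n$, and then from \cref{eq:def_of_S_hat} the $(i,j)$ entry of $\hat{\mS}^\T\hat{\mH}^\T\hat{\vv}$ equals $s_{R,j}\Real((\mH^\dagger\vv)_i)+s_{I,j}\Imag((\mH^\dagger\vv)_i)=\Real[s_j^\dagger(\mH^\dagger\vv)_i]$. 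This is precisely \cref{eq:Model II equality explicit}.

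Finally I would exploit the structure $\bar{\mQ}=\hat{\mS}^\T\hat{\mQ}\hat{\mS}$ from \cref{eq:def_of_bar_Q_bar_c}: for any $\vw\in\R^{Mn}$, \cref{eq:Model II semidefiniteness} yields $(\hat{\mS}\vw)^\T\hat{\mQ}(\hat{\mS}\vw)+\vw^\T\Diag(\vgamma)\vw\geq 0$. Fixing $i$ and any $\vw_i\in\R^M$ with $\vw_i^\T\vs_R=\vw_i^\T\vs_I=0$, I would plug in the sparse vector $\vw=\ve_i\otimes\vw_i$ (with $\ve_i$ the $i$-th standard unit vector in $\R^n$, so only the $i$-th block of $\vw$ is nonzero); by \cref{eq:def_of_S_hat}, $\hat{\mS}(\ve_i\otimes\vw_i)$ has blocks $(\vs_R^\T\vw_i)\ve_i$ and $(\vs_I^\T\vw_i)\ve_i$, both of which vanish by the choice of $\vw_i$, so the quadratic term in $\hat{\mQ}$ drops out and only $\vw_i^\T\Diag(\vgamma_i)\vw_i\geq 0$ remains, which is the asserted inequality. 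Every step is a routine consequence of \cref{thm:main thm for Model II}; the only place requiring any care is the bookkeeping in the third paragraph that rewrites the real quantity $(\hat{\mS}^\T\hat{\mH}^\T\hat{\vv})_{i,j}$ as the complex expression $\Real[s_j^\dagger(\mH^\dagger\vv)_i]$, so I would present that computation explicitly.
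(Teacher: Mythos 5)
Your proposal is correct and follows essentially the same route as the paper: invoke \cref{thm:main thm for Model II} to obtain the certificate $(\valpha,\vgamma)$, unpack \cref{eq:Model II equality} entrywise (you explicitly carry out the identification $(\hat{\mS}^\T\hat{\mH}^\T\hat{\vv})_{(i-1)M+j}=\Real[s_j^\dagger(\mH^\dagger\vv)_i]$, which the paper dismisses as "straightforward to verify"), and test \cref{eq:Model II semidefiniteness} against the sparse vector supported on the $i$-th block, whose image under $\hat{\mS}$ vanishes so that the $\bar{\mQ}$ term drops out.
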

\begin{proof}
  By \cref{thm:main thm for Model II}, if \cref{eq:Model II} is tight, we 
  can find ${\valpha}\in \R^n$ and ${\vgamma}\in \R^{Mn}$ that satisfy 
  \eqref{eq:Model II equality} and \eqref{eq:Model II semidefiniteness}. Let 
  $\vgamma$ be partitioned as  
  ${\vgamma}=[{\vgamma}_1^{\T},{\vgamma}_2^{\T},\ldots,{\vgamma}_n^{\T}]^{\T}$ 
  where ${\vgamma}_j\in \R^M$ 
  is the $j$-th block of ${\vgamma}$. 
  It is straightforward to verify that \eqref{eq:Model II equality} is equivalent to 
  \eqref{eq:Model II equality explicit}. 

  Moreover, for any $i\in \{1,2,\ldots,n\}$ and 
  any $\vw\in \R^M$ that satisfies $\vw^{\T}\vs_{R}=\vw^{\T}\vs_{I}=0$, we set 
  $\bar{\vw}=[\bar{\vw}_1^{\T},\bar{\vw}_2^{\T},\ldots,\bar{\vw}_n^{\T}]^{\T}\in \R^{Mn}$ to be 
  \begin{equation*}
    \bar{\vw}_j = 
    \begin{cases}
      \0, & \text{if}\; j\neq i,\\
      \vw, & \text{if}\; j=i.
    \end{cases}
  \end{equation*}
It is simple to check that $\hat{{\mS}}\bar{\vw}=\0$. Therefore, recalling that 
$\bar{\mQ}=\hat{{\mS}}^{\T}\mhQ\hat{{\mS}}$, by \eqref{eq:Model II semidefiniteness} 
we have 
\begin{equation*}
  \bar{\vw}^{\T} (\bar{\mQ}+\Diag({{{\vgamma}}}))\bar{\vw} 
  =\bar{\vw}^{\T}\Diag({{{\vgamma}}})\bar{\vw}
  =\vw^{\T}\Diag({\vgamma_i})\vw \geq 0.
\end{equation*}
The proof is complete.
\end{proof}

In practice,
the symbol set $\sS$, such as the one in \cref{eq:MPSK symbol set} considered in this paper, 
is symmetric with respect to the origin. 
Therefore, we can find 
$u_i'\in\{1,2,\ldots,M\}$ that satisfies $s_{u_i'}=-s_{u_i}$. Now let $\vw\in \R^M$ be 
\begin{equation*}
  w_j = 
  \begin{cases}
    0, & \text{if}\; j\notin \{u_i,u_i'\},\\
    1, & \text{if}\; j\in \{u_i,u_i'\},
  \end{cases}\; j=1,2,\ldots,M.
\end{equation*}
We have $\vw^{\T}\vs_R=s_{R,u_i}+s_{R,u_i'}=0$ and $\vw^{\T}\vs_I=s_{I,u_i}+s_{I,u_i'}=0$. 
Hence, when \cref{eq:Model II} is tight, \cref{coro:Model II necessary condition} implies that 
\begin{equation*}
  \vw^{\T}\Diag({\vgamma_i})\vw = {\gamma}_{u_i}+{\gamma}_{u_i'}=
  4\mathrm{Re}[s_{u_i}^\dagger (\mH^\dagger \vv)_i] =
  4\mathrm{Re}[(x_i^*)^\dagger (\mH^\dagger \vv)_i] 
  \geq 0.
\end{equation*}
This immediately leads to the following upper bound on the tightness probability 
of \cref{eq:Model II}.
\begin{corollary}\label{coro:general bound}
  Suppose that 
  the symbol set $\sS$ is symmetric with respect to the origin and $0\notin \sS$. 
  We further assume that 
  % \mytodo[]{The conclusion also holds when entries of $\mH$ 
  % are complex Gaussian and $\mH$ is column-wise independent.}
  \begin{enumerate}[label=(\alph*)]
    \item The entries of $\vx^*$ are drawn from $\sS$ uniformly and independently;
    \item $\vx^*$, $\mH$, and $\vv$ are mutually independent; and
    \item the distribution of $\mH$ and $\vv$ are continuous.
  \end{enumerate} 
  Then we have % \mytodo{A general result: $(1/2)^n$.}
  \begin{equation*}
    \Prob\bigl(\text{\cref{eq:Model II} is tight}\bigr) \leq \left(\frac{1}{2}\right)^n.
  \end{equation*}
\end{corollary}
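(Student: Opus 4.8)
The plan is to combine the per-coordinate necessary inequality already extracted right before the statement with the independence assumptions to obtain an exponentially small probability bound. From \cref{coro:Model II necessary condition} together with the symmetry of $\sS$ (which provides, for each $i$, an index $u_i'$ with $s_{u_i'}=-s_{u_i}$ and hence a vector $\vw$ in the null space of $[\vs_R\;\vs_I]^{\T}$ supported on $\{u_i,u_i'\}$), tightness of \cref{eq:Model II} forces
\begin{equation*}
  \mathrm{Re}\bigl[(x_i^*)^\dagger (\mH^\dagger \vv)_i\bigr] \geq 0,\quad i=1,2,\ldots,n.
\end{equation*}
So it suffices to bound $\Prob\bigl(\bigcap_{i=1}^n \{\mathrm{Re}[(x_i^*)^\dagger (\mH^\dagger \vv)_i]\geq 0\}\bigr)$.

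First I would condition on $\mH$ and $\vv$. Under assumption (b), $\vx^*$ is independent of $(\mH,\vv)$, so conditionally on $(\mH,\vv)$ the vector $\vw^*:=\mH^\dagger\vv\in\C^n$ is a fixed deterministic quantity while the entries $x_1^*,\ldots,x_n^*$ remain i.i.d.\ uniform on $\sS$. Hence the events $\{\mathrm{Re}[(x_i^*)^\dagger w_i^*]\geq 0\}$ are conditionally independent across $i$, and
\begin{equation*}
  \Prob\bigl(\text{\cref{eq:Model II} is tight}\givenp \mH,\vv\bigr)
  \leq \prod_{i=1}^n \Prob\bigl(\mathrm{Re}[(x_i^*)^\dagger w_i^*]\geq 0 \givenp \mH,\vv\bigr).
\end{equation*}
Next I would bound each factor by $1/2$. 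Write $w_i^*=|w_i^*|e^{\imag\phi_i}$; then $\mathrm{Re}[(x_i^*)^\dagger w_i^*]=|w_i^*|\cos(\phi_i-\arg x_i^*)$. By symmetry of $\sS$, whenever a symbol $s$ lies in $\sS$ so does $-s$, and $\mathrm{Re}[(-s)^\dagger w_i^*]=-\mathrm{Re}[s^\dagger w_i^*]$; pairing up symbols this way, at most half of the $M$ symbols can give a nonnegative value, except possibly when the value is exactly zero. Since $0\notin\sS$, we have $w_i^*\ne 0$ on the event under consideration; and because the distribution of $(\mH,\vv)$ is continuous (assumption (c)), the exceptional event $\mathrm{Re}[s^\dagger w_i^*]=0$ for some symbol $s$ has conditional probability contributing measure zero after integrating over $(\mH,\vv)$. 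More cleanly: for $\sS=\sS_M$ the map $s\mapsto -s$ is a fixed-point-free involution on $\sS$ pairing each $s$ with $-s$, so for fixed $w_i^*\ne 0$, among each such pair at most one satisfies $\mathrm{Re}[s^\dagger w_i^*]\ge 0$ strictly, giving $\Prob\bigl(\mathrm{Re}[(x_i^*)^\dagger w_i^*]> 0\bigr)\le 1/2$; the boundary case is then discarded by continuity of $\mH,\vv$. Therefore each conditional factor is at most $1/2$, giving the conditional bound $(1/2)^n$, and integrating over $(\mH,\vv)$ via the tower property yields $\Prob(\text{\cref{eq:Model II} is tight})\leq (1/2)^n$.

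The main obstacle is handling the boundary case $\mathrm{Re}[(x_i^*)^\dagger w_i^*]=0$ carefully: the clean ``pair up $s$ with $-s$'' argument only shows that \emph{strictly} fewer than half the symbols give strictly positive values, and one must argue that ties occur with probability zero. This is where assumptions (c) and $0\notin\sS$ enter: fix any finite set of symbols; the condition ``$\mathrm{Re}[s^\dagger (\mH^\dagger\vv)_i]=0$'' defines, for each $(i,s)$, a proper algebraic subvariety (a hyperplane in the real-linear parametrization of $(\mH,\vv)$), so the union over the finitely many pairs $(i,s)$ has Lebesgue measure zero, hence probability zero under the continuous joint distribution of $(\mH,\vv)$. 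On the complement, each $x_i^*$ has probability exactly $\le 1/2$ of landing in the set of symbols with positive real inner product, and conditional independence finishes the bound. A secondary, purely bookkeeping point is to justify the conditional independence step, which follows directly from assumptions (a) and (b) once we condition on $(\mH,\vv)$.
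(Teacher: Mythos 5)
Your proposal is correct and follows essentially the same route as the paper: derive the per-coordinate necessary condition $\mathrm{Re}[(x_i^*)^\dagger(\mH^\dagger\vv)_i]\geq 0$ from the antipodal-pair choice of $\vw$ in \cref{coro:Model II necessary condition}, condition on $(\mH,\vv)$ so the events are independent across $i$, bound each conditional probability by $1/2$ via the involution $s\mapsto -s$ with ties excluded almost surely by the continuity assumption, and finish with the tower property. The only cosmetic difference is that the paper asserts that \emph{exactly} half the symbols give a strictly positive value almost surely, whereas you argue ``at most half,'' which suffices equally well.
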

\begin{proof}
  Let $z_i=(x_i^*)^\dagger(\mH^\dagger \vv)_i$, $i=1,2,\ldots,n$.  
  Since $\mH$ and $\vv$ are independent continuous random variables, 
  the event 
  \begin{equation*}
    \bigcup_{i=1}^n \bigcup_{s\in \sS} \{\mathrm{Re}(s^\dagger (\mH^\dagger \vv)_i)=0\}
    % \;\text{where}\;s_j\in \sS
  \end{equation*}
  happens with probability zero. 
  Hence, because of the symmetry of $\sS$,  
  with probability one exactly half of the symbols $s\in \sS$ satisfy 
 $
    \mathrm{Re}(s^\dagger (\mH^\dagger \vv)_i)>0
  $
  for each $i\in\{1,2,\ldots,n\}$ when $\mH$ and $\vv$ are given. By the assumption that $x^*_i$ is uniformly distributed 
  over $\sS$, we obtain 
  \begin{equation*}
    \Prob\bigl(\mathrm{Re}(z_i)\geq 0 \givenp \mH,\vv \bigr) = \frac{1}{2}\;\text{almost surely}.
  \end{equation*}
  Moreover, $\{z_i\}_{i=1}^n$ are mutually independent conditioned on $\mH$ and $\vv$. 
  This leads to 
  \begin{align*}
    \Prob\bigl(\mathrm{Re}(z_i)\geq 0,\;i=1,2,\ldots,n \bigr)&=\E_{\mH,\vv}\bigl[\Prob\bigl(\mathrm{Re}(z_i)\geq 0,\;i=1,2,\ldots,n \givenp \mH,\vv\bigr)\bigr] \\
    & = \E_{\mH,\vv}\bigl[\prod_{i=1}^n 
    \Prob\bigl(\mathrm{Re}(z_i)\geq 0 \givenp \mH,\vv\bigr)\bigr] \\
    & = \left(\frac{1}{2}\right)^n.
  \end{align*}
  % \begin{equation*}
  %   \Prob\bigl(\mathrm{Re}(z_i)\geq 0,\;i=1,2,\ldots,n \givenp \mH,\vv\bigr) = \prod_{i=1}^n 
  %   \Prob\bigl(\mathrm{Re}(z_i)\geq 0 \givenp \mH,\vv\bigr) = \left(\frac{1}{2}\right)^n
  % \end{equation*}
  Finally, \cref{coro:general bound} follows from the fact that the tightness of \cref{eq:Model II} implies $\Real(z_i)\geq 0$, $i=1,2,\ldots,n$. 
\end{proof}

It is worth noting that all the assumptions in \cref{coro:general bound} are mild: they are satisfied if 
we use the $M$-PSK or QAM modulation scheme and the entries of $\mH$ and $\vv$ follow the complex Gaussian distribution. 

Intuitively, we will expect that \cref{eq:Model II} is less likely 
to recover the transmitted symbols with an increasing symbol set size $M$. 
In the following, we present a more refined upper bound on the tightness 
probability specific to the $M$-PSK setting and the proof can be found in~\cite{companion_report}. 
\begin{theorem}\label{thm:MPSK bound}
  Suppose that $M$-PSK is used with $M\geq 4$ 
  and the same assumptions in 
  \cref{coro:general bound} hold. % \mytodo{A refined result for $M$-PSK: $(2/M)^n$.}
  Then we have
  \begin{equation*}%\label{eq:MPSK_bound}
    \Prob\bigl(\text{\cref{eq:Model II} is tight}\bigr) \leq \left(\frac{2}{M}\right)^n.
  \end{equation*}
\end{theorem}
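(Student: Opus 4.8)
The plan is to reduce the statement to a coordinatewise fact about the ``matched'' phase of $(\mH^\dagger\vv)_i$ and then run the conditioning argument of \cref{coro:general bound}, only with a sharper per-coordinate event. First I would invoke \cref{coro:Model II necessary condition}: if \cref{eq:Model II} is tight, then for each $i$ there are a scalar $\alpha_i$ and a vector $\vgamma_i\in\R^M$ satisfying \cref{eq:Model II equality explicit} with $\vw^\T\Diag(\vgamma_i)\vw\ge 0$ for every $\vw$ in the subspace $N:=\{\vw\in\R^M:\vw^\T\vs_R=\vw^\T\vs_I=0\}$. The key point is that \cref{coro:Model II necessary condition} has already eliminated the coupling matrix $\bar\mQ$, so this is a genuinely separate requirement for each $i$. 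Substituting the $M$‑PSK symbols and writing $z_i$ as in \cref{eq:def of z}, and using $(\mH^\dagger\vv)_i=s_{u_i}z_i$ so that $s_j^\dagger(\mH^\dagger\vv)_i=e^{-\imag\Delta\theta_j}z_i$ with $\Delta\theta_j=\theta_j-\theta_{u_i}$, one gets $\gamma_{i,j}=\alpha_i-2|z_i|\cos(\arg z_i-\Delta\theta_j)$ for $j\ne u_i$ and $\gamma_{i,u_i}=2\Real(z_i)-\alpha_i$. So the per‑coordinate condition is: there exists $\alpha_i\in\R$ with $\vw^\T\Diag(\vgamma_i)\vw\ge0$ for all $\vw\in N$.

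Next I would convert this feasibility in $\alpha_i$ into a clean geometric condition on $z_i$. Since $\Diag(\vgamma_i)$ is affine in $\alpha_i$, write $\Diag(\vgamma_i)=\alpha_i(\mI_M-2\ve_{u_i}\ve_{u_i}^\T)+\Diag(\vb_i)$, where $\ve_{u_i}$ is the $u_i$‑th standard basis vector and $\vb_i$ collects the $\alpha_i$‑free parts, so that $(\vb_i)_j=-2|z_i|\cos(\arg z_i-\Delta\theta_j)$ for $j\ne u_i$ and $(\vb_i)_{u_i}=2\Real(z_i)$. Restricted to $N$, the quadratic form $\vw\mapsto\|\vw\|_2^2-2w_{u_i}^2$ is indefinite as soon as $M\ge5$: its unique negative direction is $P_N\ve_{u_i}$ (the orthogonal projection of $\ve_{u_i}$ onto $N$), and a short computation gives $\|P_N\ve_{u_i}\|_2^2=1-2/M>1/2$; the remaining admissible value $M=4$ is already covered by \cref{coro:general bound} since then $(2/M)^n=(1/2)^n$. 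Hence the equality form of the S‑lemma applies, and the existence of $\alpha_i$ with $\Diag(\vgamma_i)\succeq0$ on $N$ is equivalent to
\[
  \vw^\T\Diag(\vb_i)\,\vw\ \ge\ 0\qquad\text{for every }\vw\in N\text{ with }\|\vw\|_2^2=2w_{u_i}^2 .
\]
Normalizing $w_{u_i}=1$ (so $\sum_{j\ne u_i}w_j^2=1$ and, because $\vw\in N$, $\sum_{j\ne u_i}w_j e^{\imag\Delta\theta_j}=-1$) and dividing by $2|z_i|$, this becomes $\cos(\arg z_i)\ge\sum_{j\ne u_i}\cos(\arg z_i-\Delta\theta_j)\,w_j^2$ for all admissible $\vw$.

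The analytic heart is to show that this last condition can hold only if $\arg z_i$ lies in an arc around $0$ of angular length at most $4\pi/M$ (equivalently, $\arg(\mH^\dagger\vv)_i$ is within $2\pi/M$ of $\arg x_i^*$). The reason it is true is that the linear constraint $\sum_{j\ne u_i}w_j e^{\imag\Delta\theta_j}=-1$ forbids the weights $w_j^2$ from concentrating on the single index whose $\Delta\theta_j$ is closest to $\arg z_i$; I would make this ``forced spreading'' quantitative, bound the resulting worst‑case right‑hand side from below, and conclude that it exceeds $\cos(\arg z_i)$ once $|\arg z_i|>2\pi/M$, ruling out any $\alpha_i$. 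I expect this to be the main obstacle. A naive ``at most two negative diagonal entries, because $\dim N=M-2$'' count is useless here — taking $\alpha_i=\max_j 2\Real(s_j^\dagger(\mH^\dagger\vv)_i)$ already leaves at most one negative entry for \emph{every} $\arg z_i$ — so one really has to exploit the precise structure of $N$, either through the trigonometric optimization above or through an explicit family of null vectors of $[\vs_R\ \vs_I]$.

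Finally I would close exactly as in the proof of \cref{coro:general bound}. Condition on $\mH$ and $\vv$; since their joint distribution is continuous, almost surely $\arg(\mH^\dagger\vv)_i$ is not an integer multiple of $\pi/M$, so the $M$ candidate values of $\arg z_i$ obtained as $x_i^*$ ranges over $\sS_M$ form a $\tfrac{2\pi}{M}$‑spaced grid in generic position, and an arc of length at most $4\pi/M$ then contains at most two of them. Hence $\Prob\bigl(\text{an admissible }\alpha_i\text{ exists}\givenp\mH,\vv\bigr)\le 2/M$ almost surely for each $i$; since $\{z_i\}_{i=1}^n$ are conditionally independent given $\mH,\vv$ (they depend on the independent uniform symbols $x_i^*$), the conditional probability that admissible $\alpha_i$ exist for all $i$ is at most $(2/M)^n$. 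Taking expectation over $\mH,\vv$ and invoking \cref{coro:Model II necessary condition} (tightness of \cref{eq:Model II} forces all these certificates to exist) gives $\Prob\bigl(\text{\cref{eq:Model II} is tight}\bigr)\le(2/M)^n$.
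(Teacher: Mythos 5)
Your overall architecture is the natural one and matches the route the paper sets up: reduce to the per-coordinate certificate of \cref{coro:Model II necessary condition}, characterize for which $z_i$ an admissible $\alpha_i$ exists, show this forces $\arg z_i$ into an arc containing at most two of the $M$ equally likely values, and then rerun the conditioning argument of \cref{coro:general bound}. Your bookkeeping up to that point is correct: the formula $\gamma_{i,j}=\alpha_i-2|z_i|\cos(\arg z_i-\Delta\theta_j)$ for $j\neq u_i$ and $\gamma_{i,u_i}=2\Real(z_i)-\alpha_i$ is right, the conditions do decouple across $i$, the computation $\|P_N\ve_{u_i}\|_2^2=1-2/M$ and hence the indefiniteness of $\|\vw\|_2^2-2w_{u_i}^2$ on $N$ for $M\geq 5$ is right, the equality-form S-lemma (Finsler/Dines) is applicable there, the $M=4$ fallback to \cref{coro:general bound} is legitimate, and the final counting (a generic arc of length $\le 4\pi/M$ meets at most two points of a $\tfrac{2\pi}{M}$-spaced grid, plus conditional independence of the $z_i$ given $\mH,\vv$) is sound.

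The problem is that the theorem's actual mathematical content is the step you defer. You write that you ``would make this `forced spreading' quantitative'' and ``expect this to be the main obstacle'' --- but that step, namely proving that no admissible $\alpha_i$ exists whenever $|\arg z_i|>2\pi/M$ (equivalently, exhibiting for each such $\arg z_i$ a vector $\vw\in N$ with $w_{u_i}^2=\sum_{j\neq u_i}w_j^2$ and $\sum_{j\neq u_i}w_j^2\cos(\arg z_i-\Delta\theta_j)>\cos(\arg z_i)$), is precisely what separates \cref{thm:MPSK bound} from \cref{coro:general bound}. As you yourself observe, the only ``cheap'' null vectors of $[\vs_R\;\vs_I]^{\T}$ (antipodal pairs, $\pm1$-valued vectors) reproduce only $\Real(z_i)\ge0$ and $\alpha_i\ge0$; getting down to an arc of length $4\pi/M$ requires genuinely non-sparse test vectors (e.g., the maximizer of $w_{j}^2$ at $\Delta\theta_j=\pm 2\pi/M$ subject to $\vw\in N$ and the norm constraint) together with a trigonometric estimate that must be carried out uniformly in $M$. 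Spot-checking $M=8$ with such a vector does confirm the target inclusion (in fact with room to spare), so the plan is not doomed, but as written there is no proof of the one inequality on which the $(2/M)^n$ bound rests. Note the paper itself does not prove this theorem in the main text either --- it delegates the argument to the companion technical report --- so this missing estimate is exactly the content you would need to supply.
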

% \begin{proof}
%   The proof is relegated to TODO. %\cref{appen:proof of MPSK bound}. 
% \end{proof}

From \cref{coro:general bound,thm:MPSK bound}, we can see that the tightness probability 
of \cref{eq:Model II} is bounded away from one regardless of the noise level, and 
it tends to zero exponentially fast when the number of transmitted symbols $n$ increases. 
This is in sharp contrast to \cref{eq:CSDP2,eq:ERSDP}, whose tightness probabilities will 
approach one if the noise level is sufficiently small and the number of received signals $m$ is sufficiently large compared to $n$
\cite[Theorem~4.5]{tightness_enhanced}. 
% \subsection{Discussions} In this section, we compare 
% the tightness conditions
% \todo[inline, disable]{\cref{eq:CSDP2} is much tighter than \cref{eq:Model II} in two aspects.}
% !TeX root = ./SDR_tightness_equiv.tex

\section{Equivalence between different SDRs}
\label{sec:equiv}
In this section, we focus on the relationship between different SDR models of \cref{eq:ML}.% \mytodo{A overview of previous results}
Related to the SDRs discussed so far, a recent paper~\cite{liu2019equivalence} proved 
that \cref{eq:Model III} is equivalent to \cref{eq:ERSDP} for the MIMO detection problem with 
a general symbol set. An earlier paper~\cite{QAM_equivalence} compared 
three different SDRs in the QAM setting and showed their equivalence. 
Compared with those in \cref{sec:review_of_SDR}, 
the SDRs considered in~\cite{QAM_equivalence} differ greatly in their motivations and 
structures, and the two equivalence results are 
proved using different techniques. 
In this section, we provide a more general equivalence theorem from which 
both results follow as special cases. 
This not only reveals the underlying connection between these two works, but also may potentially lead to 
new equivalence between SDRs. 

\subsection{Review of previous results}
In~\cite{liu2019equivalence}, the authors established the following correspondence 
between a pair of feasible points of \cref{eq:Model III} and \cref{eq:ERSDP}:
\begin{equation}\label{eq:modelIII-ERSDP equiv}
  \mY = \hat{\mS}\mT\hat{\mS}^{\T}\;\text{and}\;\vy=\hat{\mS}\vt, 
\end{equation}
where $\hat{\mS}\in \R^{2n\times Mn}$ is defined in \cref{eq:def_of_S_hat}. 
In~\cite{QAM_equivalence}, the authors considered the feasible set  
of a virtually-antipodal SDR (VA-SDR): 
\begin{equation}\label{eq:VA-SDR}\tag{VA-SDR}
  \begin{aligned}
    & \begin{bmatrix}
      1 & \vb^{\T} \\
      \vb & \mB
    \end{bmatrix}\in \semiS^{qn+1} \\
    \mathrm{s.t.}\;\;\; & 
    B_{i,i}=1,\;i=1,2,\ldots,qn,
  \end{aligned}
\end{equation}
and that of a bounded-constrained SDR (BC-SDR): 
\begin{equation}\label{eq:BC-SDR}\tag{BC-SDR}
  \begin{aligned}
    & \begin{bmatrix}
      1 & \vx^{\T} \\
      \vx & \mX
    \end{bmatrix}\in \semiS^{n+1} \\
    \mathrm{s.t.}\;\;\; & 
    1\leq X_{i,i}\leq (2^q-1)^2,\;i=1,2,\ldots,n,
  \end{aligned}
\end{equation}
where $q\geq 1$ is an integer. We refer interested readers to~\cite{QAM_equivalence} and 
references therein for their derivations.
The authors proved the equivalence between \cref{eq:VA-SDR} and \cref{eq:BC-SDR} by showing the following correspondence: 
\begin{equation}\label{eq:VA-BC equiv}
  \mX = {\mW}\mB{\mW}^{\T}\;\text{and}\;\vx={\mW}\vb, 
\end{equation} 
where 
\begin{equation*}
  \mW = 
  \begin{bmatrix}
    \mI_n & 2\mI_n & 4\mI_n & \ldots & 2^{q-1}\mI_n
  \end{bmatrix}
  \in \R^{n\times qn}.
\end{equation*}

Note that both equivalence results in \eqref{eq:modelIII-ERSDP equiv} 
and \eqref{eq:VA-BC equiv} fall into the following form:
\begin{equation*}
  \left\{ 
    \begin{bmatrix}
      1 & \vy^{\T} \\
      \vy & \mY
    \end{bmatrix}\in \mathcal{F}_1
  \right\}
  =
  \left\{
    \begin{bmatrix}
      1 & \vy^{\T} \\
      \vy & \mY
    \end{bmatrix}
    = 
    \begin{bmatrix}
      1 & \0 \\
      \0 & \mP
    \end{bmatrix}
    \begin{bmatrix}
      1 & \vt^{\T} \\
      \vt & \mT
    \end{bmatrix}
    \begin{bmatrix}
      1 & \0 \\
      \0 & \mP^{\T}
    \end{bmatrix}
    :\;
    \begin{bmatrix}
      1 & \vt^{\T} \\
      \vt & \mT
    \end{bmatrix}
    \in \mathcal{F}_2
  \right\},  
\end{equation*}
where $\mathcal{F}_1$ is a subset of $\semiS^{k+1}$, 
$\mathcal{F}_2$ is a subset of $\semiS^{d+1}$, and we call $\mP\in \R^{k\times d}$ 
as the transformation matrix. Moreover, both the 
transformation matrices $\hat{\mS}$ in \eqref{eq:modelIII-ERSDP equiv} 
and $\mW$ in \eqref{eq:VA-BC equiv} have a special ``separable'' property that 
we now define for ease of presentation. 
\begin{definition}
  A matrix $\mP\in\R^{k\times d}$ is called separable if there 
  exist a partition of rows $\alpha_1,\alpha_2,\ldots,\alpha_l$ and 
  a partition of columns $\beta_1,\beta_2,\ldots,\beta_l$ such that 
  \begin{equation*}
    \mP[\alpha_i,\beta_j]=\mathbf{0},\qquad \forall\;i\neq j.
  \end{equation*}
\end{definition}
In other words, a matrix is separable if, after possibly rearranging rows and 
columns, it has a block diagonal structure. In particular, for 
the transformation matrix $\hat{\mS}$ in \cref{eq:modelIII-ERSDP equiv}, 
the corresponding row and column partitions are given by 
\begin{equation}\label{eq:parititon in ModelIII}
  \alpha_i = \{i,n+i\},\;\beta_i=\{(i-1)M+1,(i-1)M+2,\ldots,iM\},\;i=1,2,\ldots,n;
\end{equation}
for the transformation matrix $\mW$ in \cref{eq:VA-BC equiv}, 
they are given by 
\begin{equation}\label{eq:parititon in VA}
  \alpha_i = \{i\},\;\beta_i=\{i,i+n,i+2n,\ldots,i+(q-1)n\},\;i=1,2,\ldots,n.
\end{equation}
% \todo[inline,disable]{Define an equivalence relation by the essential feasible region.}
\subsection{A general equivalence theorem} Now we are ready to present our main equivalence
result. 
\begin{theorem}\label{thm:main equiv thm}
  Suppose that the matrix $\mP\in \R^{k\times d}$ is separable with 
  row partition $\alpha_1,\alpha_2,\ldots, \alpha_l$ 
  and column partition $\beta_1,\beta_2,\ldots, \beta_l$.% \mytodo{The gneralized equivalence theorem}
  Moreover, define 
  \begin{equation*}
    k_i:=|\alpha_i|,\;d_i:=|\beta_i|,\;\text{and}\;\mP_i:=\mP[\alpha_i,\beta_i]\in \R^{k_i\times d_i},\;
    i=1,2,\ldots,l,
  \end{equation*}
  where we use $|\cdot|$ to denote the cardinality of a set. 
  Then given arbitrary constraint sets ${\mathcal{A}_i\subset \R^{d_i\times d_i}}$ 
  for $i=1,2,\ldots,l$, 
  the following set 
  \begin{equation}\label{eq:original} %\tag{VA-SDR}
    \begin{aligned}
      & \begin{bmatrix}
        1 & \vy^{\T} \\
        \vy & \mY
      \end{bmatrix}\in \semiS^{k+1} \\
      \mathrm{s.t.}\;\;\; & 
      \mY = \mP \mT \mP^{\T},\;\vy=\mP\vt,\\
      & \begin{bmatrix}
        1 & \vt^{\T} \\
        \vt & \mT
      \end{bmatrix}\in \semiS^{d+1},\\
      & \begin{bmatrix}
          1 & \vt[\beta_i]^{\T} \\
          \vt[\beta_i] & \mT[\beta_i]
      \end{bmatrix}\in \mathcal{A}_i,\;i=1,2,\ldots,l,
    \end{aligned}
  \end{equation}
  where the variables are $\vy\in \R^k$, $\mY\in \R^{k\times k}$, $\vt\in \R^d$, and $\mT\in \R^{d\times d}$, 
  is the same as 
  \begin{equation}\label{eq:decomposed} %\tag{VA-SDR}
    \begin{aligned}
      & \begin{bmatrix}
        1 & \vy^{\T} \\
        \vy & \mY
      \end{bmatrix}\in \semiS^{k+1} \\
      \mathrm{s.t.}\;\;\; & 
      \mY[\alpha_i] = \mP_i \mT^{(i)} \mP_i^{\T},\;\vy[\alpha_i]=\mP_i\vt^{(i)},\\
      & \begin{bmatrix}
        1 & (\vt^{(i)})^{\T} \\
        \vt^{(i)} & \mT^{(i)}
      \end{bmatrix}\in \semiS^{d_i+1},\\
      & \begin{bmatrix}
        1 & (\vt^{(i)})^{\T} \\
        \vt^{(i)} & \mT^{(i)}
      \end{bmatrix} \in \mathcal{A}_i,\;i=1,2,\ldots,l,
    \end{aligned}
  \end{equation}
  where the variables are $\vy\in \R^k$, $\mY\in \R^{k\times k}$, $\vt^{(i)}\in \R^{d_i}$, and $\mT^{(i)}\in \R^{d_i\times d_i}$ 
  with $i=1,2,\ldots,l$. 
\end{theorem}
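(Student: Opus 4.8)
The plan is to prove that the two sets coincide by showing each contains the other. The forward inclusion \cref{eq:original}~$\subseteq$~\cref{eq:decomposed} is pure bookkeeping, while the reverse inclusion is the substance and calls for a Gram-matrix (factorization) argument. For the forward inclusion: given a matrix $\begin{bmatrix}1&\vy^\T\\\vy&\mY\end{bmatrix}$ in the set \cref{eq:original}, with witnesses $\vt,\mT$, I would set $\vt^{(i)}:=\vt[\beta_i]$ and $\mT^{(i)}:=\mT[\beta_i]$. Then $\begin{bmatrix}1&(\vt^{(i)})^\T\\\vt^{(i)}&\mT^{(i)}\end{bmatrix}$ is the principal submatrix of the PSD matrix $\begin{bmatrix}1&\vt^\T\\\vt&\mT\end{bmatrix}$ indexed by $\{0\}\cup\beta_i$, hence PSD, and it is exactly the matrix required to lie in $\mathcal{A}_i$. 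Since $\mP$ is separable, the rows of $\mP$ indexed by $\alpha_i$ are supported on the columns $\beta_i$ and there coincide with $\mP_i$, so $\vy[\alpha_i]=\mP_i\vt[\beta_i]=\mP_i\vt^{(i)}$ and $\mY[\alpha_i]=\mP_i\mT[\beta_i]\mP_i^\T=\mP_i\mT^{(i)}\mP_i^\T$, which are the remaining constraints of \cref{eq:decomposed}.

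For the reverse inclusion, fix a matrix $\begin{bmatrix}1&\vy^\T\\\vy&\mY\end{bmatrix}$ in \cref{eq:decomposed} with witnesses $\vt^{(i)},\mT^{(i)}$; I must produce $\vt,\mT$. Fix a large ambient dimension $N$ and realize the relevant PSD matrices as Gram matrices in $\R^N$: choose $\vg_0,\vg_1,\dots,\vg_k\in\R^N$ with $\langle\vg_0,\vg_0\rangle=1$, $\langle\vg_0,\vg_p\rangle=y_p$, $\langle\vg_p,\vg_q\rangle=Y_{p,q}$ (possible since $\begin{bmatrix}1&\vy^\T\\\vy&\mY\end{bmatrix}\succeq 0$), and, for each $i$, choose $\vh_{0,i}\in\R^N$ and $\{\vh_{c,i}\}_{c\in\beta_i}$ realizing the Gram matrix $\begin{bmatrix}1&(\vt^{(i)})^\T\\\vt^{(i)}&\mT^{(i)}\end{bmatrix}$. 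Put $\vw_{p,i}:=\sum_{c\in\beta_i}(\mP_i)_{p,c}\vh_{c,i}$ for $p\in\alpha_i$; then, using the constraints $\vy[\alpha_i]=\mP_i\vt^{(i)}$ and $\mY[\alpha_i]=\mP_i\mT^{(i)}\mP_i^\T$, a direct computation shows that $(\vh_{0,i},\{\vw_{p,i}\}_{p\in\alpha_i})$ and $(\vg_0,\{\vg_p\}_{p\in\alpha_i})$ have the same Gram matrix. Hence there is an orthogonal transformation $\mO_i$ of $\R^N$ with $\mO_i\vh_{0,i}=\vg_0$ and $\mO_i\vw_{p,i}=\vg_p$ for all $p\in\alpha_i$. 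Because $\beta_1,\dots,\beta_l$ partition the column index set, each $c$ lies in a unique $\beta_i$, so I define $\vf_0:=\vg_0$ and $\vf_c:=\mO_i\vh_{c,i}$ for $c\in\beta_i$, and let $\vt,\mT$ be the Gram data of $(\vf_0,\vf_1,\dots,\vf_d)$, that is $t_c:=\langle\vf_0,\vf_c\rangle$ and $T_{c,c'}:=\langle\vf_c,\vf_{c'}\rangle$. By construction $\begin{bmatrix}1&\vt^\T\\\vt&\mT\end{bmatrix}\succeq 0$; orthogonality of $\mO_i$ gives $\vt[\beta_i]=\vt^{(i)}$ and $\mT[\beta_i]=\mT^{(i)}$, so each $\mathcal{A}_i$-constraint is inherited; and since $\sum_{c\in\beta_i}(\mP_i)_{p,c}\vf_c=\mO_i\vw_{p,i}=\vg_p$, separability of $\mP$ yields $(\mP\vt)[\alpha_i]=\mP_i\vt^{(i)}=\vy[\alpha_i]$ and $(\mP\mT\mP^\T)[\alpha_i,\alpha_j]=\big(\langle\vg_p,\vg_q\rangle\big)_{p\in\alpha_i,\,q\in\alpha_j}=\mY[\alpha_i,\alpha_j]$ for all $i,j$, i.e.\ $\vy=\mP\vt$ and $\mY=\mP\mT\mP^\T$. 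Thus $\begin{bmatrix}1&\vy^\T\\\vy&\mY\end{bmatrix}$ lies in \cref{eq:original}.

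The delicate point, and the part I expect to be the main obstacle, is the reverse inclusion: one must realize the several small PSD matrices and the single large one on one common Euclidean space $\R^N$ and then glue them by orthogonal maps $\mO_i$ that all agree on the distinguished vector $\vg_0$. Separability of $\mP$ is precisely what forces the off-diagonal blocks of $\mP\mT\mP^\T$ to collapse to inner products $\langle\vg_p,\vg_q\rangle$ among the globally chosen vectors, so that the different $\mO_i$'s never interfere; the corresponding fact for the easy direction is merely that principal submatrices of PSD matrices are PSD. The only genuine care needed is in the index bookkeeping (rows $\alpha_i$ versus columns $\beta_i$, and the leading index $0$); the fact that two tuples of vectors with equal Gram matrices are related by an orthogonal transformation of a sufficiently large $\R^N$ is elementary.
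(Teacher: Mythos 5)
Your proposal is correct and is essentially the paper's own argument in Gram-vector language: the easy inclusion is handled identically, and your orthogonal maps $\mO_i$ agreeing on the distinguished vector $\vg_0$ are precisely the isometries $\mU_i$ that the paper extracts from the two-factorizations lemma (\cite[Theorem~7.3.11]{matrix_analysis}) applied to $[\vv \;\; \mV_i]$ and $\tilde{\mZ}^{(i)}\tilde{\mP}_i^{\T}$, after which both proofs glue the pieces into one Gram matrix $\mR^{\T}\mR$ and verify the constraints the same way. No gap; the only cosmetic difference is that the paper works with semi-orthogonal factor matrices in dimension $\max\{k,d\}+1$ rather than full orthogonal transformations of a large ambient space.
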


The following lemma will be 
useful in our proof. 
\begin{lemma}[{\cite[Theorem~7.3.11]{matrix_analysis}}]\label{lem: two factorizations}
  Let $\mA\in \R^{p\times n}$ and $\mB\in \R^{q\times n}$ where $p\leq q$. Then 
  $\mA^{\T}\mA=\mB^{\T}\mB$ if and only if there exists a matrix $\mU\in \R^{q\times p}$ with 
  $\mU^{\T}\mU=\mI_p$ such that $\mB=\mU\mA$. 
\end{lemma}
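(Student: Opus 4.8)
The plan is to prove both implications, with the reverse direction---necessity of the factorization---being the substantive part. The sufficiency is a one-line computation: if $\mB = \mU\mA$ with $\mU^{\T}\mU = \mI_p$, then $\mB^{\T}\mB = \mA^{\T}\mU^{\T}\mU\mA = \mA^{\T}\mA$.

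For necessity, assume $\mA^{\T}\mA = \mB^{\T}\mB$. The idea is to construct $\mU$ as an isometric extension of the natural map sending $\mA\vc \mapsto \mB\vc$. First I would verify this map is well defined on the column space $\mathrm{Col}(\mA)\subseteq\R^p$: for any $\vc\in\R^n$,
\[
  \|\mB\vc\|_2^2 = \vc^{\T}\mB^{\T}\mB\vc = \vc^{\T}\mA^{\T}\mA\vc = \|\mA\vc\|_2^2,
\]
so $\mA\vc=\0$ forces $\mB\vc=\0$. The same identity shows the induced linear map $\tilde{\mU}\colon\mathrm{Col}(\mA)\to\mathrm{Col}(\mB)$, $\tilde{\mU}(\mA\vc)=\mB\vc$, preserves norms and is therefore a bijective isometry between these subspaces. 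Symmetry of the hypothesis in $\mA$ and $\mB$ also gives $\ker\mA=\ker\mB$, so $\mathrm{Col}(\mA)$ and $\mathrm{Col}(\mB)$ share the common dimension $r:=\mathrm{rank}(\mA)=\mathrm{rank}(\mB)$.

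Next I would extend $\tilde{\mU}$ to an isometry of the whole space $\R^p$ into $\R^q$. Writing $\R^p=\mathrm{Col}(\mA)\oplus\mathrm{Col}(\mA)^{\perp}$ and $\R^q=\mathrm{Col}(\mB)\oplus\mathrm{Col}(\mB)^{\perp}$, the two complements have dimensions $p-r$ and $q-r$, and this is exactly where the hypothesis $p\le q$ enters: since $q-r\ge p-r$, the space $\mathrm{Col}(\mB)^{\perp}$ is large enough to receive an isometric image of $\mathrm{Col}(\mA)^{\perp}$. I would fix orthonormal bases of the two complements and define $\mU$ to agree with $\tilde{\mU}$ on $\mathrm{Col}(\mA)$ and to send the chosen orthonormal basis of $\mathrm{Col}(\mA)^{\perp}$ to any $p-r$ of the orthonormal basis vectors of $\mathrm{Col}(\mB)^{\perp}$. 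Because $\mU$ then maps an orthonormal basis of $\R^p$ to an orthonormal set in $\R^q$, its columns are orthonormal, i.e. $\mU^{\T}\mU=\mI_p$. Finally, since every column of $\mA$ lies in $\mathrm{Col}(\mA)$, where $\mU$ coincides with $\tilde{\mU}$, we obtain $\mU\mA=\mB$ by construction.

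The main obstacle is the extension step: one must confirm that gluing $\tilde{\mU}$ to an isometry of the orthogonal complements yields a single linear isometry on all of $\R^p$---equivalently, that the block-defined $\mU$ genuinely has orthonormal columns---and this is precisely where $p\le q$ is indispensable, since otherwise $\mathrm{Col}(\mB)^{\perp}$ could be too small to host the image of $\mathrm{Col}(\mA)^{\perp}$. An alternative I would keep in reserve is an SVD-based argument: comparing the symmetric factorizations of $\mA^{\T}\mA$ and $\mB^{\T}\mB$ forces the right singular vectors and singular values to match, after which $\mU$ is assembled from the left singular vectors padded with extra orthonormal columns in $\R^q$. That route, however, requires more care with repeated singular values, so I would present the isometry-extension argument as the primary proof.
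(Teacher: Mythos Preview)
Your proof is correct. Note, however, that the paper does not supply its own proof of this lemma: it is stated with a citation to \cite[Theorem~7.3.11]{matrix_analysis} and used as a black box in the proof of \cref{thm:main equiv thm}. So there is no ``paper's proof'' to compare against beyond the reference to Horn and Johnson.

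That said, your isometry-extension argument is essentially the standard proof one finds for this result. The key steps---checking that $\mA\vc\mapsto\mB\vc$ is well defined and norm-preserving on $\mathrm{Col}(\mA)$, then extending across orthogonal complements using $p\le q$---are exactly right, and you correctly identify where the dimension hypothesis enters. The SVD alternative you mention is also valid and is closer in spirit to how Horn and Johnson organize the argument, but your primary route is cleaner for a self-contained write-up.
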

% The proof we present here is inspired by both~\cite{QAM_equivalence} and~\cite{liu2019equivalence}. 
% we also provide a different proof in \cref{appen: alternative proof of main thm}, which may be of independent interest.
\begin{proof}[Proof of \cref{thm:main equiv thm}]
Without loss of generality, we assume the transformation matrix $\mP\in \R^{k\times d}$ 
is in the form 
\begin{equation*}
  \mP = 
  \begin{bmatrix}
    \mP_1 & & & \\
          & \mP_2 & & \\
          & & \ddots & \\
          & & & \mP_l
  \end{bmatrix},
\end{equation*}
where $\mP_i\in \R^{k_i\times d_i}$, $\sum_{i=1}^l k_i=k$, and $\sum_{i=1}^l d_i=d$. 

For one direction, suppose that $(\vy, \mY, \vt,\mT)$ satisfies the constraints 
in \cref{eq:original}. Then it is straightforward to see that $(\vy,\mY)$ also 
satisfies the constraints in \cref{eq:decomposed} together with 
\begin{equation*}
  \vt^{(i)} = \vt[\beta_i],\;\mT^{(i)}=\mT[\beta_i],\;i=1,2,\ldots,l.
\end{equation*}

The other direction of the proof is more involved. Given $(\vy,\mY)$ 
and the variables $\{\vt^{(i)},\mT^{(i)}\}_{i=1}^l$ in \cref{eq:decomposed}, our goal is to construct 
$(\vt,\mT)$ satisfying the conditions in \cref{eq:original}. To simplify the notations, we define 
\begin{equation}\label{eq:augmented matrices}
  \tilde{\mY} := 
  \begin{bmatrix}
    1 & \vy^{\T} \\
    \vy & \mY
  \end{bmatrix}
  ,\;
  \tilde{\mT}^{(i)} := 
  \begin{bmatrix}
    1 & (\vt^{(i)})^{\T} \\
    \vt^{(i)} & \mT^{(i)}
  \end{bmatrix}
  ,\;
  \text{and}\;
  \tilde{\mP}_i := 
  \begin{bmatrix}
    1 & \0 \\
    \0 & \mP_i
  \end{bmatrix}.
\end{equation}
Let $r=\max\{k,d\}$. 
Since $\tilde{\mY}\in \semiS^{k+1}$, it can be factorized as 
\begin{equation}\label{eq:Y factorization}
  \tilde{\mY} = \tilde{\mV}^{\T}\tilde{\mV},
\end{equation}
where $\tilde{\mV}\in \R^{(r+1)\times (k+1)}$. The above factorization can be done because $r\geq k$.
Further, we partition $\tilde{\mV}$ as % $\mV= [\vv_1,\vv_2,\ldots,\vv_{k+1}]\in \R^{(d+1)\times (k+1)}$
\begin{equation*}%\label{eq:paritioned V}
  \tilde{\mV} = 
  \begin{bmatrix}
    \vv & \mV_1 & \mV_2 & \ldots  & \mV_l
  \end{bmatrix},
\end{equation*}
where $\vv\in \R^{r+1}$ and $\mV_i\in \R^{(r+1)\times k_i}$ contains the columns of $\tilde{\mV}$ indexed by $\alpha_i$ 
for $i=1,2,\ldots,l$.   
% (this is possible because $d>k$). 
Moreover, we have $\vv^{\T}\vv=\tilde{\mY}_{1,1}=1$. 
Similarly, $\tilde{\mT}^{(i)}$ can be factorized as 
\begin{equation}\label{eq:T factorization}
  \tilde{\mT}^{(i)} = (\tilde{\mZ}^{(i)})^{\T}\tilde{\mZ}^{(i)},\;i=1,2,\ldots,l,
\end{equation}
where $\tilde{\mZ}^{(i)}\in \R^{(d_i+1)\times (d_i+1)}$ and is partitioned as 
\begin{equation}\label{eq:paritioned Z}
  \tilde{\mZ}^{(i)} = 
  \begin{bmatrix}
    \vz^{(i)} & \mZ^{(i)}
  \end{bmatrix}.
\end{equation} 
Combining \cref{eq:T factorization} with 
the equality constraints in \cref{eq:decomposed}, we get 
\begin{equation*}
  \begin{bmatrix}
    1 & \vy[\alpha_i]^{\T} \\
    \vy[\alpha_i] & \mY[\alpha_i]
  \end{bmatrix}
  =\tilde{\mP}_i \tilde{\mT}^{(i)}\tilde{\mP}^{\T}_i 
  = \bigl(\tilde{\mZ}^{(i)}\tilde{\mP}^{\T}_i\bigr)^{\T}
  \bigl(\tilde{\mZ}^{(i)}\tilde{\mP}^{\T}_i\bigr),\;i=1,2,\ldots,l,
\end{equation*}
where $\tilde{\mZ}^{(i)}\tilde{\mP}^{\T}_i\in \R^{(d_i+1)\times (k_i+1) }$. 
On the other hand, the factorization in \cref{eq:Y factorization} implies 
\begin{equation*}
  \begin{bmatrix}
    1 & \vy[\alpha_i]^{\T} \\
    \vy[\alpha_i] & \mY[\alpha_i]
  \end{bmatrix}
  = 
  \begin{bmatrix}
    \vv & \mV_i
  \end{bmatrix}^{\T}
  \begin{bmatrix}
    \vv & \mV_i
  \end{bmatrix},
\end{equation*}
where $  [
  \vv \quad \mV_i
]\in \R^{(r+1)\times (k_i+1)}$. By  
\cref{lem: two factorizations}, we can find $\mU_i\in \R^{(r+1)\times (d_i+1)}$ 
with $\mU_i^{\T}\mU_i=\mI_{d_i+1}$ such that 
\begin{equation}\label{eq: equality of factors}
  \begin{bmatrix}
    \vv & \mV_i
  \end{bmatrix} 
  =\mU_i \tilde{\mZ}^{(i)}\tilde{\mP}^{\T}_i.
\end{equation}
Substituting \cref{eq:augmented matrices} and \cref{eq:paritioned Z} into \cref{eq: equality of factors}, 
we get 
\begin{equation}\label{eq: equality between V and Z}
  \vv=\mU_i\vz^{(i)}\;\text{and}\;
  \mV_i=\mU_i{\mZ}^{(i)}\mP_i^\T.
\end{equation}
Finally, we define 
\begin{equation*}
  \mR = \begin{bmatrix}
    \vv & \mU_1{\mZ}^{(1)}  & \mU_2{\mZ}^{(2)} & \ldots & \mU_l{\mZ}^{(l)}
  \end{bmatrix}\in \R^{(r+1)\times (d+1)},
\end{equation*}
whose columns indexed by $\beta_i$ are given by $\mU_i{\mZ}^{(i)}$, 
and construct $(\vt,\mT)$ by 
\begin{equation} \label{eq:constructed t and T}
\begin{bmatrix}
  1 & \vt^{\T} \\
  \vt & \mT
\end{bmatrix}
=
\mR^{\T}\mR.
\end{equation}

Next we verify that $(\vt,\mT)$ in \cref{eq:constructed t and T} indeed satisfies 
all the constraints in \cref{eq:original}. 
The positive semidefiniteness is evident by our construction. 
For the equality constraints, by using \cref{eq: equality between V and Z} we have
\begin{align*}
  \mR
  \begin{bmatrix}
    1 & \0 \\
    \0 & \mP^{\T}
  \end{bmatrix}
  &= 
  \begin{bmatrix}
    \vv & \mU_1{\mZ}^{(1)}\mP_1^{\T}  & \mU_1{\mZ}^{(2)}\mP_2^{\T} & \ldots & \mU_l{\mZ}^{(l)}\mP_l^{\T}
  \end{bmatrix}\\
  &=
  \begin{bmatrix}
    \vv & \mV_1 & \mV_2 & \ldots & \mV_l
  \end{bmatrix} \\
  &= \tilde{\mV}.
\end{align*}
Hence, we get 
\begin{align*}
  \begin{bmatrix}
    1 & \0 \\
    \0 & \mP
  \end{bmatrix}
  \begin{bmatrix}
    1 & \vt^{\T} \\
    \vt & \mT
  \end{bmatrix}
  \begin{bmatrix}
    1 & \0 \\
    \0 & \mP^{\T}
  \end{bmatrix} & = 
  \begin{bmatrix}
    1 & \0 \\
    \0 & \mP
  \end{bmatrix}
  \mR^{\T}\mR
  \begin{bmatrix}
    1 & \0 \\
    \0 & \mP^{\T}
  \end{bmatrix} \\
  & = \tilde{\mV}^{\T}\tilde{\mV}\\
  & =    \begin{bmatrix}
    1 & \vy^{\T} \\
    \vy & \mY
  \end{bmatrix},
\end{align*}
which is equivalent to $\mY=\mP\mT\mP^{\T}$ and $\vy=\mP\vt$. 
Lastly, note that 
\begin{align}
  \begin{bmatrix}
    1 & \vt[\beta_i]^{\T} \\
    \vt[\beta_i] & \mT[\beta_i]
\end{bmatrix}
&= 
\begin{bmatrix}
  \vv & \mU_i\mZ^{(i)}
\end{bmatrix}^{\T}
\begin{bmatrix}
  \vv & \mU_i\mZ^{(i)}
\end{bmatrix} \label{eq:submatrix_of_T}\\ 
&= \begin{bmatrix}
  \mU_i\vz^{(i)} & \mU_i\mZ^{(i)}
\end{bmatrix}^{\T}
\begin{bmatrix}
  \mU_i\vz^{(i)} & \mU_i\mZ^{(i)}
\end{bmatrix} \label{eq:substitute v_1}\\
&= (\tilde{\mZ}^{(i)})^{\T}\mU_i^{\T}\mU_i\tilde{\mZ}^{(i)} \nonumber\\ 
& = (\tilde{\mZ}^{(i)})^{\T}\tilde{\mZ}^{(i)} \label{eq:orthogonal of U}\\
& = \tilde{\mT}^{(i)} = 
\begin{bmatrix}
  1 & (\vt^{(i)})^{\T} \\
  \vt^{(i)} & \mT^{(i)}
\end{bmatrix},\nonumber
\end{align}
where we used \cref{eq:constructed t and T} in \cref{eq:submatrix_of_T}, $\vv=\mU_i\vz^{(i)}$ (cf. \cref{eq: equality between V and Z}) in \cref{eq:substitute v_1}, and 
$\mU_i^{\T}\mU_i=\mI_{d_i+1}$ in \cref{eq:orthogonal of U}. Hence, the remaining 
constraints in \cref{eq:original} are also satisfied because of the conditions on
$\tilde{\mT}^{(i)}$ in \cref{eq:decomposed}. 

The proof of \cref{thm:main equiv thm} is complete.  
\end{proof}

Two remarks are in order. Firstly, the variables in \cref{eq:original} are in 
a high-dimensional PSD cone $\semiS^{d+1}$, while those in \cref{eq:decomposed} are in the 
Cartesian product of smaller PSD cones 
% \begin{equation*}
$
\semiS^{k+1}\times \semiS^{d_1+1}\times \semiS^{d_2+1}\times \cdots \times \semiS^{d_l+1}
$.  
% end{equation*} 
When $k,\;d_1,\; d_2,\ldots,d_l$ are much smaller than $d$, using \cref{eq:decomposed} instead of 
\cref{eq:original} can achieve dimension reduction without any  
additional cost. This can bring substantially higher computational efficiency for solving 
the corresponding SDP in practice (see \cref{sec:experiments}). 
Secondly, \cref{thm:main equiv thm} is very general and thus could be applicable to a potentially wide range 
of problems. It is worth highlighting that we require no assumptions on the sets $\mathcal{A}_i$ that constrain the submatrices, as well 
as the row and column partitions of the separable matrix $\mP$. 
This enables us to accommodate both the equivalence results 
\cref{eq:modelIII-ERSDP equiv} and \cref{eq:VA-BC equiv}, as we will 
show next. 
\subsubsection{Equivalence between \texorpdfstring{\cref{eq:Model III}}{(ESDR2-T)} and 
\texorpdfstring{\cref{eq:ERSDP}}{(ESDR-Y)}}
As we noted before, the transformation matrix $\hat{\mS}$ in \cref{eq:modelIII-ERSDP equiv} is separable with the row 
and column partitions given in \cref{eq:parititon in ModelIII}
and we have 
% Maybe give a symbol for this?
\begin{equation*}
  \hat{\mS}[\alpha_i,\beta_i]=
  \begin{bmatrix}
    \vs^{\T}_{R} \\
    \vs^{\T}_{I}
  \end{bmatrix}\in \R^{2\times M},\;i=1,2,\ldots,n.
\end{equation*}
Moreover, we can see that the feasible set of \cref{eq:Model III} 
is in the form of \cref{eq:original} with the set $\mathcal{A}_i$ given by 
\begin{align*} %\label{eq:set A in Model III}
  \mathcal{A}_i &= 
  \biggl\{
    \begin{bmatrix}
      1 & \vt^{\T} \\
      \vt & \Diag(\vt)
    \end{bmatrix}:\;\vt\in\R^M,\;
    \sum_{j=1}^M t_j=1,\;t_j\geq 0,\;j=1,2,\ldots,M
    \biggr\} \\
  &= \biggl\{
    \sum_{j=1}^M t_j\mE_j:\;\sum_{j=1}^M t_j=1,\;t_j\geq 0,\;j=1,2,\ldots,M
    \biggr\} \\
    &= \mathrm{conv}\{\mE_1,\mE_2,\ldots,\mE_M\}, 
\end{align*}
where  
\begin{equation*}
  \mE_j = 
  \begin{bmatrix}
    1 \\
    \ve_j
  \end{bmatrix}
  \begin{bmatrix}
    1 \\
    \ve_j
  \end{bmatrix}^{\T},\;j=1,2,\ldots,M,
\end{equation*}
and $\ve_j\in\R^M$ is the $j$-th unit vector. 
Applying \cref{thm:main equiv thm} to 
\cref{eq:Model III} gives the following equivalent formulation: 
% Hence, by applying \cref{thm:main equiv thm} to 
% \cref{eq:Model III} and removing unnecessary PSD constraints, 
% we get the following equivalent formulation
\begin{equation}\label{eq:equivalent of Model III} %\tag{VA-SDR}
  \begin{aligned}
    & \begin{bmatrix}
      1 & \vy^{\T} \\
      \vy & \mY
    \end{bmatrix}\in \semiS^{2n+1} \\
    \mathrm{s.t.}\;\;\; & 
    \begin{bmatrix}
      1 & y_i & y_{n+i} \\
      y_i & Y_{i,i} & Y_{i,n+i} \\
      y_{n+i} & Y_{n+i,i} & Y_{n+i,n+i}
    \end{bmatrix}
    =\begin{bmatrix}
      1 & \0  \\
      \0  & \vs^{\T}_{R} \\
      \0  & \vs^{\T}_{I}
    \end{bmatrix}
    \begin{bmatrix}
      1 & (\vt^{(i)})^{\T} \\
      \vt^{(i)} & \mT^{(i)}
    \end{bmatrix}
    \begin{bmatrix}
      1 & \0  & \0  \\
      \0  & \vs_{R} & \vs_{I}
    \end{bmatrix}, \\
    & \begin{bmatrix}
      1 & (\vt^{(i)})^{\T} \\
      \vt^{(i)} & \mT^{(i)}
    \end{bmatrix}\in \semiS^{M+1},\\
    & \begin{bmatrix}
      1 & (\vt^{(i)})^{\T} \\
      \vt^{(i)} & \mT^{(i)}
    \end{bmatrix}\in \mathrm{conv}\{\mE_1,\mE_2,\ldots,\mE_M\},\;
    i=1,2,\ldots,n.
  \end{aligned}
\end{equation}
Since each matrix
$\mE_j$ is PSD, their convex hull is a subset of 
$\semiS^{M+1}$ and hence the PSD constraints in \cref{eq:equivalent of Model III} are 
redundant. Furthermore, 
note that 
\begin{align*}
  \begin{bmatrix}
    1 & \0  \\
    \0  & \vs^{\T}_{R} \\
    \0  & \vs^{\T}_{I}
  \end{bmatrix}\mE_j
  \begin{bmatrix}
    1 & \0  & \0  \\
    \0  & \vs_{R} & \vs_{I}
  \end{bmatrix}
  &=\begin{bmatrix}
    1 & \0  \\
    \0  & \vs^{\T}_{R} \\
    \0  & \vs^{\T}_{I}
  \end{bmatrix}
  \begin{bmatrix}
    1 \\
    \ve_j
  \end{bmatrix}
  \begin{bmatrix}
    1 \\
    \ve_j
  \end{bmatrix}^{\T}
  \begin{bmatrix}
    1 & \0  & \0  \\
    \0  & \vs_{R} & \vs_{I}
  \end{bmatrix}\\
  &=
  \begin{bmatrix}
    1 \\  s_{R,j} \\  s_{I,j}
  \end{bmatrix}
  \begin{bmatrix}
    1 & s_{R,j} & s_{I,j}
  \end{bmatrix},
\end{align*}
which is exactly the matrix $\mK_j$ defined in \cref{eq:def of K}. Therefore, 
we can conclude that \cref{eq:equivalent of Model III} is the same as 
\cref{eq:ERSDP}, and hence 
\cref{eq:Model III} and \cref{eq:ERSDP} are equivalent. 
% by noting that 
% \begin{equation*}
%   \mP_j = \begin{bmatrix}
%     1 & \0  \\
%     \0  & \vs^{\T}_{R} \\
%     \0  & \vs^{\T}_{I}
%   \end{bmatrix}\mE_j
%   \begin{bmatrix}
%     1 & \0  & \0  \\
%     \0  & \vs_{R} & \vs_{I}
%   \end{bmatrix}.
% \end{equation*}
\subsubsection{Equivalence between \texorpdfstring{\cref{eq:VA-SDR}}{(VA-SDR)} and 
\texorpdfstring{\cref{eq:BC-SDR}}{(BC-SDR)}}
Similarly, we observe that the transformation matrix $\mW$ in \cref{eq:VA-BC equiv} is separable with row and column 
partitions given in \cref{eq:parititon in VA}, and let 
\begin{equation}\label{eq:def of w}
  \vw^{\T} := \mW[\alpha_i,\beta_i]=
  \begin{bmatrix}
    1 & 2 & 4 & \ldots & 2^{q-1}
  \end{bmatrix}. 
\end{equation}
The feasible set in \cref{eq:VA-SDR} conforms to \cref{eq:original} with the 
set $\mathcal{A}_i$ given by 
\begin{equation*}
  \mathcal{A}_i = 
  \biggl\{
  \begin{bmatrix}
    1 & \vb^{\T} \\
    \vb & \mB
  \end{bmatrix}:\;
  \vb\in \R^q,\;\mB\in\R^{q\times q},\;
  \diag(\mB) = \1_q
  % B_{j,j}=1,\;j=1,2,\ldots,q  
  \biggr\}.
\end{equation*}
Hence, by applying \cref{thm:main equiv thm} to \cref{eq:VA-SDR}, we get 
the following equivalent formulation:  
\begin{equation}\label{eq:equivalent of VA-SDR} %\tag{VA-SDR}
  \begin{aligned}
    & \begin{bmatrix}
      1 & \vx^{\T} \\
      \vx & \mX
    \end{bmatrix}\in \semiS^{n+1} \\
    \mathrm{s.t.}\;\;\; & 
    X_{i,i} = \vw^{\T} \mB^{(i)}\vw,\; x_i=\vw^{\T}\vb^{(i)},\\
    & \begin{bmatrix}
      1 & (\vb^{(i)})^{\T} \\
      \vb^{(i)} & \mB^{(i)}
    \end{bmatrix}\in \semiS^{q+1},\\
    &  
    \diag(\mB^{(i)}) = \1_q,\;
    % B^{(i)}_{j,j}=1,\;j=1,2,\ldots,q,\;
    i=1,2,\ldots,n.
  \end{aligned}
\end{equation}

Next we argue that all the constraints $x_i=\vw^{\T}\vb^{(i)}$ are redundant, i.e., 
the set in \cref{eq:equivalent of VA-SDR} is equivalent to 
\begin{equation}\label{eq:equivalent of VA-SDR 2}
  \biggl\{
  \begin{bmatrix}
    1 & \vx^{\T} \\
    \vx & \mX
  \end{bmatrix}\in\semiS^{n+1}:\;
  X_{i,i} = \vw^{\T}\mB^{(i)}\vw,\;
  \mB^{(i)}\in \semiS^q,\;
  \diag(\mB^{(i)}) = \1_q,\;
  % B^{(i)}_{j,j}=1,\;j=1,2,\ldots,q,\;
  i=1,2,\ldots,n\biggr\}.
\end{equation}
To show this, we need to prove that, for any $\vx,\mX,\mB^{(1)},\ldots,\mB^{(n)}$ 
satisfying the constraints in \cref{eq:equivalent of VA-SDR 2}, there must exist  
$\vb^{(i)}\in \R^q$ such that 
\begin{equation}\label{eq:conditions on b}
  x_i=\vw^{\T}\vb^{(i)}\;\text{and}\;\begin{bmatrix}
    1 & (\vb^{(i)})^{\T} \\
    \vb^{(i)} & \mB^{(i)}
  \end{bmatrix}\succeq 0,\;i=1,2,\ldots,n.
\end{equation}
Fix $i\in\{1,2,\ldots,n\}$. Note that the PSD constraints in \cref{eq:equivalent of VA-SDR 2} implies 
\begin{equation}\label{eq: inequality on x_i and X_ii}
  \begin{bmatrix}
    1 & x_i \\
    x_i & X_{i,i}
  \end{bmatrix}\succeq 0 \Leftrightarrow 
  x_i^2\leq X_{i,i}.
\end{equation}
When $X_{i,i}=0$, we must have $x_i=0$ and we can achieve \cref{eq:conditions on b} by 
simply letting $\vb^{(i)}=\0$. Otherwise, we have $X_{i,i}>0$ and hence we can let 
\begin{equation}\label{eq:equality of b}
  \vb^{(i)} = \frac{x_i}{X_{i,i}}\mB^{(i)}\vw. 
\end{equation}
Since $X_{i,i}=\vw^{\T}\mB^{(i)}\vw$, we can see that $\vw^{\T}\vb^{(i)}=(\vw^{\T}\mB^{(i)}\vw)x_i/X_{i,i}=x_i$. 

To verify the PSD constraint in \cref{eq:conditions on b}, it suffices to show that 
$
  \mB^{(i)}\succeq \vb^{(i)}(\vb^{(i)})^{\T}
$. 
Note that 
\begin{equation*}
  \begin{bmatrix}
    X_{i,i} & \vw^{\T}\mB^{(i)} \\
    \mB^{(i)}\vw  & \mB^{(i)}
  \end{bmatrix}=
  \begin{bmatrix}
    \vw^{\T}\mB^{(i)}\vw & \vw^{\T}\mB^{(i)} \\
    \mB^{(i)}\vw  & \mB^{(i)}
  \end{bmatrix}
  =\begin{bmatrix}
    \vw^{\T} \\
    \mI_{q}
  \end{bmatrix}\mB^{(i)}
  \begin{bmatrix}
    \vw & \mI_{q} 
  \end{bmatrix}\succeq 0,
\end{equation*}
which implies the Schur complement is also PSD, i.e.,  
\begin{equation*}% \label{eq:Schur complement}
  \mB^{(i)}-\frac{1}{X_{i,i}}\mB^{(i)}\vw\vw^{\T}\mB^{(i)}\succeq 0. 
\end{equation*}
% By combining \cref{eq: inequality on x_i and X_ii,eq:equality of b,eq:Schur complement}, 
This, together with \cref{eq: inequality on x_i and X_ii,eq:equality of b}, shows
\begin{equation*}
  \vb^{(i)}(\vb^{(i)})^{\T} =\frac{x_i^2}{X_{i,i}^2}\mB^{(i)}\vw\vw^{\T}\mB^{(i)}
  \preceq \frac{X_{i,i}}{X_{i,i}^2}\mB^{(i)}\vw\vw^{\T}\mB^{(i)} 
  \preceq \mB^{(i)}.
\end{equation*} 
Hence both conditions in \cref{eq:conditions on b} are satisfied. 

Finally, to show that \cref{eq:equivalent of VA-SDR 2} is the same as 
\cref{eq:BC-SDR}, we need the following lemma. 
\begin{lemma}\label{lem:bound on Xii}
  Let $\vw\in\R^q$ be the vector defined in \cref{eq:def of w}. It holds that  
  \begin{equation}\label{eq:set_equiv}
    \bigl\{x:\;\exists\;\mB\in\semiS^{q}\;\mathrm{s.t.}\; x=\vw^{\T}\mB\vw,\;    \diag(\mB) = \1_q
    \bigr\}
    =\{1\leq x\leq (2^q-1)^2\}.
  \end{equation}
\end{lemma}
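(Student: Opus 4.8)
The plan is to realize the left-hand set in \cref{eq:set_equiv} as the image of the compact convex set of $q\times q$ correlation matrices under the linear functional $\mB\mapsto\vw^\T\mB\vw$, and then to identify its two endpoints by explicit extremal matrices.

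First I would write any feasible $\mB$ as a Gram matrix: since $\mB\in\semiS^q$ with $\diag(\mB)=\1_q$, there are unit vectors $\vu_1,\ldots,\vu_q$ (in $\R^q$, say) with $B_{ij}=\vu_i^\T\vu_j$, so that $\vw^\T\mB\vw=\bigl\|\sum_{i=1}^q w_i\vu_i\bigr\|_2^2$ with $w_i=2^{i-1}$. For the \emph{upper} bound, the triangle inequality gives $\bigl\|\sum_i w_i\vu_i\bigr\|_2\le\sum_i w_i=2^q-1$, with equality when all $\vu_i$ coincide, i.e. $\mB=\1_q\1_q^\T$; hence the maximum value is $(2^q-1)^2$. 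For the \emph{lower} bound I would exploit the ``superincreasing'' (geometric, ratio $2$) structure of $\vw$: since $w_q=2^{q-1}>2^{q-1}-1=\sum_{i<q}w_i$, the reverse triangle inequality yields $\bigl\|\sum_i w_i\vu_i\bigr\|_2\ge w_q\|\vu_q\|_2-\bigl\|\sum_{i<q}w_i\vu_i\bigr\|_2\ge w_q-\sum_{i<q}w_i=1$, so $\vw^\T\mB\vw\ge1$, with equality when $\vu_q$ is a unit vector and $\vu_i=-\vu_q$ for $i<q$, i.e. $\mB=\vv\vv^\T$ with $\vv=(-1,\ldots,-1,1)^\T$ (for $q=1$ the claim is immediate).

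To conclude that every value in between is attained, I would observe that $\{\mB\in\semiS^q:\diag(\mB)=\1_q\}$ is convex and compact and $\mB\mapsto\vw^\T\mB\vw$ is linear, so its image is a closed bounded interval; together with the two explicit matrices above realizing $1$ and $(2^q-1)^2$, this image is exactly $[1,(2^q-1)^2]$, which is \cref{eq:set_equiv}. The only nontrivial step is the lower bound, and its crux is simply recognizing that the doubling pattern of $\vw$ makes the last coordinate exceed the sum of all the others by exactly $1$, making the reverse triangle inequality tight; the remaining steps are either immediate or a standard convexity argument.
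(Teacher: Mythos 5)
Your proof is correct, and it shares the overall architecture of the paper's argument: one inclusion follows from the convexity of the image of the elliptope $\{\mB\in\semiS^{q}:\diag(\mB)=\1_q\}$ under the linear map $\mB\mapsto\vw^{\T}\mB\vw$ together with the two explicit rank-one matrices $\1_q\1_q^{\T}$ and $\vv\vv^{\T}$, $\vv=(-1,\ldots,-1,1)^{\T}$, which attain the endpoints (the paper exhibits exactly these two matrices); the other inclusion is the pair of bounds $1\le\vw^{\T}\mB\vw\le(2^q-1)^2$ for every feasible $\mB$. Where you genuinely differ is in how the bounds are obtained. The paper works with the matrix entries: the upper bound comes from $|B_{i,j}|\le 1$ together with the positivity of the entries of $\vw\vw^{\T}$, and the lower bound from splitting off the last row and column of $\mB$ into a block $\mB'$ and a vector $\vb'$, invoking $\mB'\succeq\vb'(\vb')^{\T}$, and completing the square to get $\vw^{\T}\mB\vw\ge\bigl((\vw')^{\T}\vb'+2^{q-1}\bigr)^2\ge 1$. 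You instead pass to a Gram representation $B_{i,j}=\vu_i^{\T}\vu_j$ with unit vectors $\vu_i$, after which both bounds are the triangle and reverse triangle inequalities applied to $\bigl\|\sum_i w_i\vu_i\bigr\|_2$. The two lower-bound arguments are at bottom the same estimate---the paper's completed square is the squared projection of $\sum_i w_i\vu_i$ onto $\vu_q$, and its bound $(\vw')^{\T}\vb'\ge-(2^{q-1}-1)$ is your triangle-inequality bound on $\bigl\|\sum_{i<q}w_i\vu_i\bigr\|_2$---but your formulation makes the role of the superincreasing weights and the tightness of both bounds more transparent, at the small cost of introducing the Gram factorization.
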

\begin{proof}
  See \cref{appen:proof of interval}. 
\end{proof}

Putting all pieces together, we have proved that \cref{eq:VA-SDR} is equivalent to 
\cref{eq:BC-SDR} by showing the correspondence \cref{eq:VA-BC equiv}. 
% \todo[inline, disable]{As a corollary, \cref{eq:ERSDP} is equivalent to \cref{eq:Model III}.}
% \todo[inline, color=blue!40, disable]{It is better to have another application of the equivalence theorem. 
% Options:\\
% 1. OHE-SDR1(Model II) and its counterpart;\\
% 2. OHE-SDR2+(Model IV) and its counterpart.}  

% !TeX root = ./SDR_tightness_equiv.tex

\section{Numerical results}\label{sec:experiments}
In this section, we present some numerical results. 
Following standard assumptions in the wireless communication literature (see, e.g.,~\cite[Chapter 7]{tse2005fundamentals}), we 
assume that all entries of the channel matrix $\mH$ are 
independent and identically distributed (i.i.d.) 
following a complex circular Gaussian distribution with zero mean and 
unit variance, and all entries of the additive noise $\vv$ are 
i.i.d. following a complex circular Gaussian distribution with zero mean and 
variance $\sigma^2$. Further, we choose the transmitted symbols $x^*_1,x^*_2,\ldots,x^*_n$ 
from 
the symbol set $\sS_M$ in \cref{eq:MPSK symbol set} independently and uniformly.
We define the SNR as the received SNR per symbol: %\cite{QAM_equivalence}: 
\begin{equation*}
  \mathrm{SNR}:= \frac{\E[\|\mH\vx^*\|_2^2]}{n\E[\|\vv\|_2^2]} = \frac{mn}{n\cdot m\sigma^2}=\frac{1}{\sigma^2}. 
\end{equation*}

We first consider a MIMO system where $(m,n)=(16,10)$ and $M=8$. 
To evaluate the empirical probabilities of SDRs not being tight, 
we compute the optimal solutions 
of \cref{eq:CSDP2}, \cref{eq:ERSDP}, and \cref{eq:Model II} by the general-purpose SDP solver SeDuMi~\cite{sedumi} with the desired accuracy set to $10^{-6}$. 
The SDR is decided to be tight if the output $\hat{\vx}$ returned by the SDP solver\footnote{The output $\hat{\vx}$ is directly given by the optimal solution in \cref{eq:CSDP2}, 
while it is obtained from the relation \cref{eq:def of hat Q and hat c} between $\vx$ and $\vy$ in \cref{eq:ERSDP} and 
the relation \cref{eq:express discrete by 01} between $\vx$ and $\vt$ in \cref{eq:Model II}.} satisfies $\|\hat{\vx}-\vx^*\|_\infty\leq 10^{-4} $.
We also evaluate the empirical probabilities of conditions \cref{eq:CSDP2_suff,eq:CSDP2_necc_and_suff_intro,eq:ERSDP_necc_intro} 
not being satisfied. 
% Moreover, we are also interested in the VER performance of the three SDRs after rounding.   
% In our experiment, we employ a simple rounding procedure: 
% we make the symbol decision by (c.f. \cref{eq:x=St})
% \begin{equation*}
%   \hat{\vx} = 
%     \mathrm{demod}(\mS\tilde{\vt}),
% \end{equation*}
% where $\tilde{\vt}$ is the optimal solution of \cref{eq:CSDP2}/\cref{eq:ERSDP}/\cref{eq:Model II} and 
% $\mathrm{demod}(\cdot)$ is an entrywise demodulation function that maps the input to 
% the closest symbol in \cref{eq:MPSK symbol set}. 
% Finally, the VER performance of a sphere decoder is also simulated to serve as a reference.
We run the simulations at 8 SNR values in total ranging from 3 dB to 24 dB.  
For each SNR value, 10,000 random instances are generated 
and the averaged results are plotted in \cref{fig:tightness probability versus SNR}. 
  
\begin{figure}[htbp]
  \centering
  % \missingfigure{The tightness probability versus SNR}
  \includegraphics[width=0.9\linewidth]{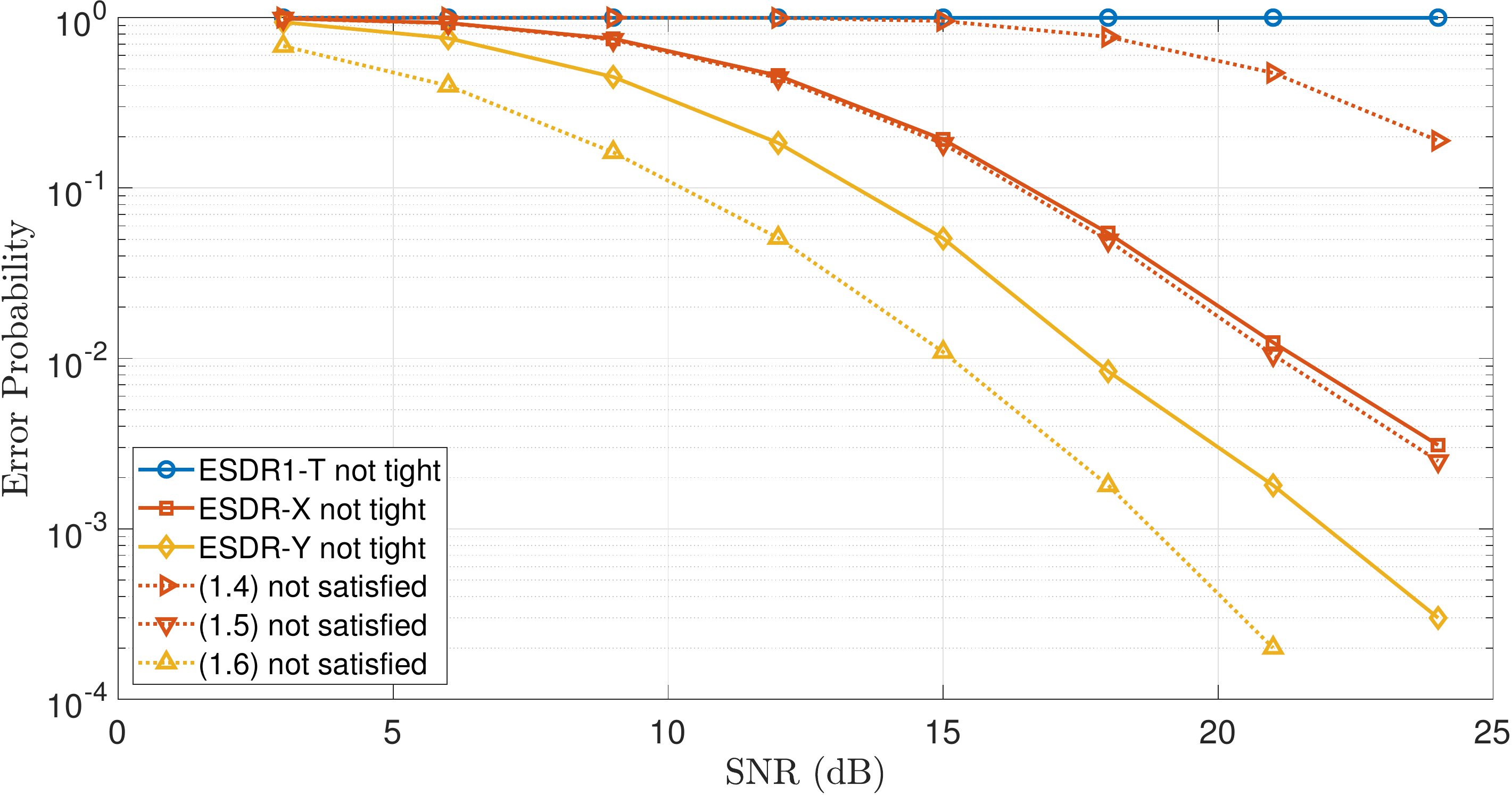}
  \caption{Error probabilities versus the SNR in a $16\times 10$ MIMO system with 8-PSK. }
  \label{fig:tightness probability versus SNR}
\end{figure}

We can see from \cref{fig:tightness probability versus SNR} that our results 
\cref{eq:CSDP2_necc_and_suff_intro,eq:ERSDP_necc_intro} provide better 
characterizations than the previous tightness condition \cref{eq:CSDP2_suff} in~\cite{liu2019equivalence}. 
The empirical probability of \cref{eq:CSDP2} not being tight 
matches perfectly with our analysis given by 
the necessary and sufficient condition \cref{eq:CSDP2_necc_and_suff_intro}. 
The probability of \cref{eq:ERSDP_necc_intro} not being satisfied is also 
a good approximation to the probability of \cref{eq:ERSDP} not being tight, 
underestimating the latter roughly by a factor of 9.
Moreover, the numerical results also validate our analysis that 
\cref{eq:Model II} is not tight with high probability. 
In fact, 
\cref{eq:Model II} fails to recover the vector of transmitted symbols 
in all 80,000 instances. %\footnote{On the other hand, in our numerical experiments we also found that 
%\cref{eq:Model II} can occasionally be tight when $n$ is small. }. 
% there is only one 
% instance out of 70,000 in total where \cref{eq:Model II} is tight. 

% On the other hand, our experiment shows that while the VER performances of the 
% SDR detectors, in particular \cref{eq:CSDP2} and \cref{eq:ERSDP}, are close to 
% that of the optimal sphere decoder, the corresponding SDRs are tight only in a small portion of 
% those successful instances. 
% Hence, we conclude that it is not sufficient to explain the superiority of SDR-based detectors 
% by only considering their tightness conditions. This echoes with previous observations in~\cite{diversity_SDR}. 

Next, we compare the optimal values as well as the CPU time of solving
\cref{eq:ERSDP} and \cref{eq:Model III}. \cref{tab:relative diff} shows the relative 
difference between the optimal values of \cref{eq:ERSDP} (denoted as $\mathrm{opt}_{\text{ESDR-}\mY}$) and \cref{eq:Model III} (denoted as $\mathrm{opt}_{\text{ESDR2-}\mT}$) averaged over 300 simulations, which is defined as 
$|\mathrm{opt}_{\text{ESDR-}\mY}-\mathrm{opt}_{\text{ESDR2-}\mT}|/|\mathrm{opt}_{\text{ESDR2-}\mT}|$. 
We can see from \cref{tab:relative diff} that the difference is consistently in the order 1e$-$7 in various settings, which verifies the equivalence between 
\cref{eq:ERSDP} and \cref{eq:Model III}. 
In \cref{fig:time complexity}, we plot the average CPU time consumed by solving \cref{eq:ERSDP} and \cref{eq:Model III} in an 
8-PSK system with increasing problem size $n$. For fair comparison, both SDRs are implemented and solved by SeDuMi and we repeat 
the simulations for 300 times. With the same error performance, 
we can see that \cref{eq:ERSDP} indeed solves the MIMO detection problem \cref{eq:ML} more efficiently and saves roughly 90\% of the computational time in 
our experiment. 
  \begin{table}[tbhp]\label{tab:relative diff}
  {\footnotesize
  \caption{Average relative difference between \cref{eq:ERSDP} and \cref{eq:Model III} in optimal objective values.}
  \begin{center}    
  \begin{tabular}{|c|c|c|c|c|}
    \hline
    \multirow{2}{*}{SNR}       & \multicolumn{4}{c|}{Relative diff. in optimal objective values.}                           \\ \cline{2-5} 
                               & $(m,n)= (4,4)$               & $(m,n)=(6,4)$                & $(m,n)=(10,10)$                & $(m,n)=(15,10)  $              \\ \hline
    5dB                       &      4.62e$-$7                  & 5.10e$-$7                      & 6.26e$-$7                      &   7.73e$-$7                    \\ \hline
    10dB                       &     5.67e$-$7                  & 4.06e$-$7                      & 6.50e$-$7                      &   7.16e$-$7                    \\ \hline
    {15dB} & 5.94e$-$7 & 3.83e$-$7 & 7.80e$-$7 & 5.58e$-$7 \\ \hline
    \end{tabular}
  \end{center}
  }
  \end{table}
% \todo[inline, color=blue!40,disable]{A table for the generalized equivalence theorem. Solution difference and 
% computation speedup.}
\begin{figure}[htbp]
  \centering
  % \missingfigure{The tightness probability versus SNR}
  \includegraphics[width=0.9\linewidth]{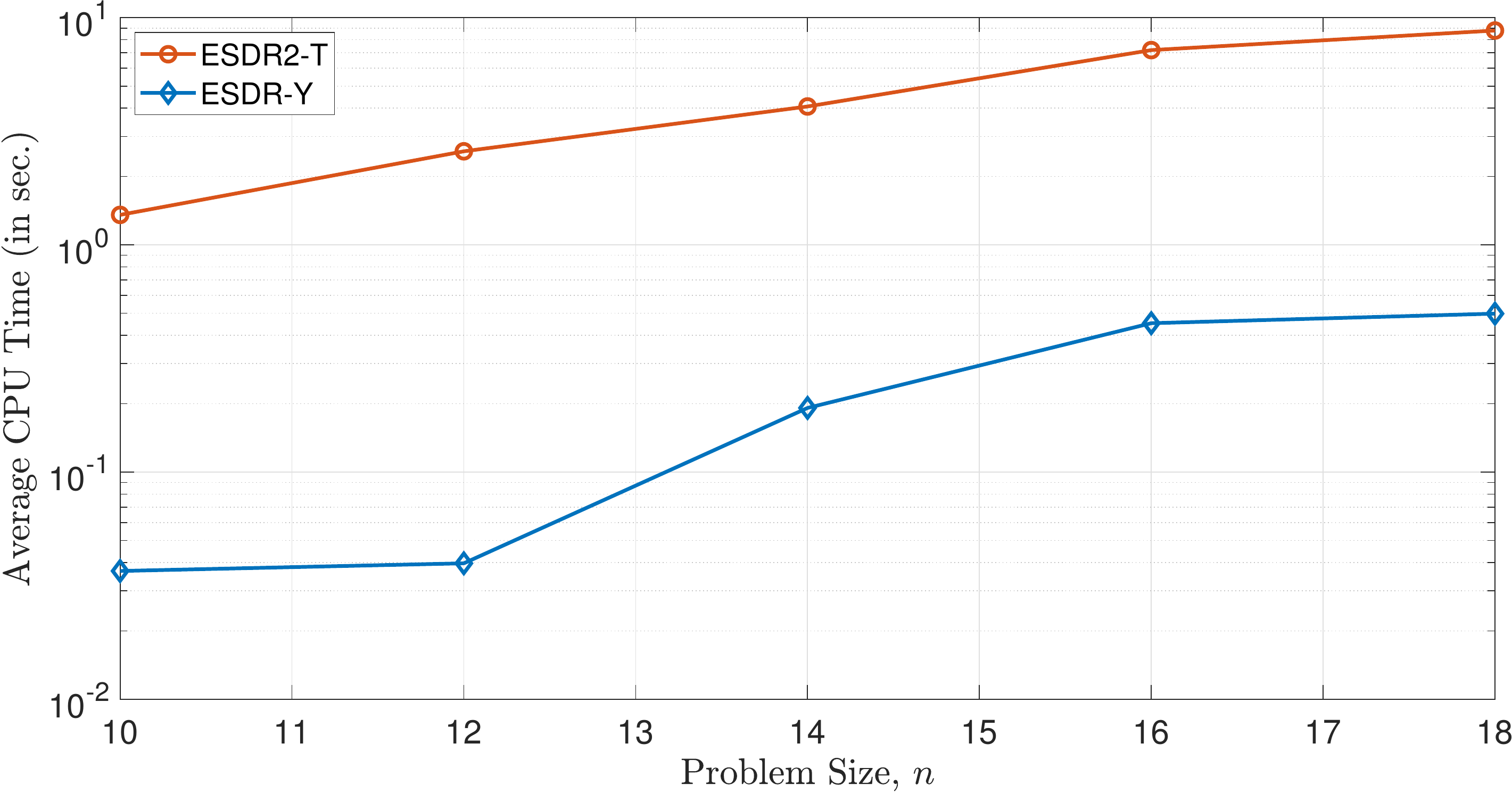}
  \caption{Average CPU time of solving \cref{eq:ERSDP} and \cref{eq:Model III} when $M=8$.}
  \label{fig:time complexity}
\end{figure}
% \todo[inline, color=blue!40,disable]{A figure that compares computation time versus problem size?}
% \todo[inline, color=blue!40,disable]{Tightness analysis is not sufficient to 
% validate the effectiveness of \cref{eq:ERSDP}: need further probability analysis.}

\section{Conclusions}
\label{sec:conclusions}
In this paper, we studied the tightness and 
equivalence of various existing SDR models for the MIMO detection 
problem \cref{eq:ML}. 
For the two SDRs \cref{eq:CSDP2} and \cref{eq:ERSDP} proposed in~\cite{tightness_enhanced}, 
we improved their sufficient tightness condition and  
showed that the former is tight if and only if \cref{eq:CSDP2_necc_and_suff_intro} holds 
while the latter is tight only if \cref{eq:ERSDP_necc_intro} holds. 
On the other hand, for the SDR \cref{eq:Model II} proposed in~\cite{near_ml_decoding}, 
we proved that its tightness probability decays to zero exponentially 
fast with an increasing problem size under some mild assumptions. 
Together with known results, our analysis provides a more complete understanding
of the tightness conditions for existing SDRs. 
Moreover, we proposed a general theorem that 
unifies previous results on the equivalence of SDRs~\cite{QAM_equivalence,liu2019equivalence}. 
For a subset of PSD matrices with a special ``separable'' structure, 
we showed its equivalence to another subset of PSD matrices in 
a potentially much smaller dimension.  
Our numerical results demonstrated 
that we 
could significantly improve the computational efficiency by using such equivalence. 
% save the computational time by nearly 90\% by using such equivalence. 

Due to its generality, 
we believe that our equivalence theorem can be applied to SDPs in other domains beyond MIMO detection 
and we would like to put this as a future work. 
Additionally, we noticed that  
the SDRs for problem \cref{eq:ML} combined with some simple rounding 
procedure can detect the transmitted symbols successfully even when  
the optimal solution has rank more than one. Similar observations have also been made in~\cite{diversity_SDR}. 
It would be interesting to extend our analysis to take the postprocessing procedure into account.  
% will be an interesting research direction 
% and we leave this for future work.
\appendix
\section{Simplification of \texorpdfstring{\cref{eq:ERSDP equality,eq:ERSDP inequality}}{(3.5) and (3.6)}}
\label{appen:simplification}  
% To simplify the notations, we use 
% $\vlambda$, $\vmu$, $\vg$, $\mLambda$, and $\mM$ to denote 
% $\bvlambda$, $\bvmu$, $\bvg$, $\bmLambda$, and $\bmM$, respectively. 
Fix $i\in\{1,2,\ldots,n\}$. From \cref{eq:ERSDP equality}, we have 
\begin{equation*}
  (\hat{\mH}^{\T}\hat{\vv})_i = g_i+\lambda_iy_i^*+\mu_iy_{n+i}^*\;
  \text{and}\;
  (\hat{\mH}^{\T}\hat{\vv})_{n+i} = g_{n+i}+\mu_iy_i^*+\lambda_{n+i}y_{n+i}^*,
\end{equation*}
which can be written in a matrix form:
\begin{equation}\label{eq:ERSDP equality matrix form}
  \begin{bmatrix}
    (\hat{\mH}^{\T}\hat{\vv})_i \\
    (\hat{\mH}^{\T}\hat{\vv})_{n+i}
  \end{bmatrix}
  =
  \begin{bmatrix}
    g_i \\
    g_{n+i}
  \end{bmatrix}
  +
  \begin{bmatrix}
    \lambda_i & \mu_i \\
    \mu_i & \lambda_{n+i}
  \end{bmatrix}
  \begin{bmatrix}
    y_i^* \\
    y_{n+i}^*
  \end{bmatrix}.
\end{equation}
Recall the definitions of $\mGamma_i$ in \cref{eq:def_of_Lambda_M_Gamma} and $\mK_j$ in \cref{eq:def of K}. 
Then
\begin{equation}\label{eq:gamma dot P}
    \langle \mGamma_i,\mK_j\rangle = 
    2  
    \begin{bmatrix}
    \cos(\theta_j) \\ \sin(\theta_j)
    \end{bmatrix}^{\T}
    \begin{bmatrix}
    g_i \\ g_{n+i}
    \end{bmatrix}+
    \begin{bmatrix}
    \cos(\theta_j) \\ \sin(\theta_j)
    \end{bmatrix}^{\T}
    \begin{bmatrix}
    \lambda_i & \mu_i \\
    \mu_i & \lambda_{n+i}  
    \end{bmatrix}
    \begin{bmatrix}
      \cos(\theta_j) \\ \sin(\theta_j)
    \end{bmatrix}.
\end{equation} 
Using \cref{eq:ERSDP equality matrix form}, we have
\begin{equation}\label{eq:substitute g}
  \begin{aligned}
    \begin{bmatrix}
      \cos(\theta_j) \\ \sin(\theta_j)
      \end{bmatrix}^{\T}
      \begin{bmatrix}
      g_i \\ g_{n+i}
      \end{bmatrix}
      &= 
      \begin{bmatrix}
        \cos(\theta_j) \\ \sin(\theta_j)
      \end{bmatrix}^{\T}
      \begin{bmatrix}
        (\hat{\mH}^{\T}\hat{\vv})_i \\
        (\hat{\mH}^{\T}\hat{\vv})_{n+i}
      \end{bmatrix}
      -
      \begin{bmatrix}
        \cos(\theta_j) \\ \sin(\theta_j)
      \end{bmatrix}^{\T}
      \begin{bmatrix}
        \lambda_i & \mu_i \\
        \mu_i & \lambda_{n+i}
      \end{bmatrix}
      \begin{bmatrix}
        y_i^* \\
        y_{n+i}^*
      \end{bmatrix} \\
      &= 
      \begin{bmatrix}
        \cos(\Delta\theta_j) \\ \sin(\Delta\theta_j)
      \end{bmatrix}^{\T}
      \begin{bmatrix}
        \hat{z}_i \\
        \hat{z}_{n+i}
      \end{bmatrix}
      -
      \begin{bmatrix}
        \cos(\theta_j) \\ \sin(\theta_j)
      \end{bmatrix}^{\T}
      \begin{bmatrix}
        \lambda_i & \mu_i \\
        \mu_i & \lambda_{n+i}
      \end{bmatrix}
      \begin{bmatrix}
        y_i^* \\
        y_{n+i}^*
      \end{bmatrix},
  \end{aligned}
\end{equation} 
where $\Delta\theta_j=\theta_j-\theta_{u_i}$, $\hat{z}_i=\Real(z_i)$, 
and $\hat{z}_{n+i}=\Imag(z_i)$ (cf. \cref{eq:def of z}).
Combining \cref{eq:gamma dot P} with \cref{eq:substitute g}, we get 
\begin{align*}
  \langle \mGamma_i,\mK_j\rangle &= 
  2\begin{bmatrix}
    \cos(\Delta\theta_j) \\ \sin(\Delta\theta_j)
  \end{bmatrix}^{\T}
  \begin{bmatrix}
    \hat{z}_i \\
    \hat{z}_{n+i}
  \end{bmatrix}
  -
  2\begin{bmatrix}
    \cos(\theta_j) \\ \sin(\theta_j)
  \end{bmatrix}^{\T}
  \begin{bmatrix}
    \lambda_i & \mu_i \\
    \mu_i & \lambda_{n+i}
  \end{bmatrix}
  \begin{bmatrix}
    y_i^* \\
    y_{n+i}^*
  \end{bmatrix}+\\
  &\phantom{=}\; \begin{bmatrix}
    \cos(\theta_j) \\ \sin(\theta_j)
    \end{bmatrix}^{\T}
    \begin{bmatrix}
    \lambda_i & \mu_i \\
    \mu_i & \lambda_{n+i}  
    \end{bmatrix}
    \begin{bmatrix}
      \cos(\theta_j) \\ \sin(\theta_j)
    \end{bmatrix}.
\end{align*}
In particular, when $j=u_i$, the above becomes 
\begin{equation*}
  \langle \mGamma_i,\mK_{u_i}\rangle =
  2\begin{bmatrix}
    1 \\ 0
  \end{bmatrix}^{\T}
  \begin{bmatrix}
    \hat{z}_i \\
    \hat{z}_{n+i}
  \end{bmatrix}
  -
  \begin{bmatrix}
    y_i^* \\
    y_{n+i}^*
  \end{bmatrix}^{\T}
    \begin{bmatrix}
    \lambda_i & \mu_i \\
    \mu_i & \lambda_{n+i}  
    \end{bmatrix}
    \begin{bmatrix}
      y_i^* \\
      y_{n+i}^*
    \end{bmatrix}.
\end{equation*}
Hence, when $j\neq u_i$, \cref{eq:ERSDP inequality} is equivalent to 
\begin{align*}
  % \langle \mGamma_i,\mK_{u_i}\rangle
  % &\geq \langle \mGamma_i,\mK_j\rangle \\
  % \Leftrightarrow
  2\begin{bmatrix}
    1-\cos(\Delta\theta_j) \\ -\sin(\Delta\theta_j)
  \end{bmatrix}^{\T}
  \begin{bmatrix}
    \hat{z}_i \\
    \hat{z}_{n+i}
  \end{bmatrix}
  &\geq 
  \begin{bmatrix}
    y_i^*-\cos(\theta_j) \\
    y_{n+i}^*-\sin(\theta_j)
  \end{bmatrix}^{\T}
    \begin{bmatrix}
    \lambda_i & \mu_i \\
    \mu_i & \lambda_{n+i}  
    \end{bmatrix}
    \begin{bmatrix}
      y_i^*-\cos(\theta_j) \\
      y_{n+i}^*-\sin(\theta_j)
    \end{bmatrix} \\
  \Leftrightarrow 
  \begin{bmatrix}
    1 \\ -\cot\left(\frac{\Delta\theta_j}{2}\right)
  \end{bmatrix}^{\T}
  \begin{bmatrix}
    \hat{z}_i \\
    \hat{z}_{n+i}
  \end{bmatrix}
  &\geq 
  \begin{bmatrix}
    \sin(\theta_{u_i}+\frac{\Delta\theta_j}{2}) \\
    -\cos(\theta_{u_i}+\frac{\Delta\theta_j}{2})
  \end{bmatrix}^{\T}
    \begin{bmatrix}
    \lambda_i & \mu_i \\
    \mu_i & \lambda_{n+i}  
    \end{bmatrix}
    \begin{bmatrix}
      \sin(\theta_{u_i}+\frac{\Delta\theta_j}{2}) \\
      -\cos(\theta_{u_i}+\frac{\Delta\theta_j}{2})
    \end{bmatrix},
\end{align*} 
which is exactly the same as \cref{eq:simplified inequality}.

\section{Proof of \texorpdfstring{\cref{lem:bound on Xii}}{Lemma 4.4}}\label{appen:proof of interval}
To simplify the notations, we use $\mathcal{A}$ and $\mathcal{B}$ to denote the left-hand side and 
the right-hand side in \cref{eq:set_equiv}, respectively. 

We first prove that $\mathcal{A}\supset \mathcal{B}$. Let 
$\mathcal{C}= \bigl\{\mB\in\semiS^{q}:\;\diag(\mB) = \1_q\bigr\}$,
and we can view $\mathcal{A}$ as the image of the convex set 
$\mathcal{C}$ under the affine mapping 
$\mB\mapsto \langle \mB, \vw\vw^{\T}\rangle$. Therefore, the set $\mathcal{A}$ is also convex. 
Moreover, note that both the rank-one matrices $\1_q \1_q^{\T}$ and 
$[\begin{smallmatrix}
  -\1_{q-1} \\ 1
\end{smallmatrix}][\begin{smallmatrix}
  -\1_{q-1} \\ 1
\end{smallmatrix}]^{\T}$
% \begin{equation*}
%   \begin{bmatrix}
%     -1  \\ -1 \\ \vdots \\ 1
%   \end{bmatrix}
%   \begin{bmatrix}
%     -1  & -1 & \ldots & 1
%   \end{bmatrix}
% \end{equation*}
belong to $\mathcal{C}$. Direct computations show that 
\begin{align*}
  \vw^{\T} \1_q \1_q^{\T}\vw &= \biggl(\sum_{i=1}^q 2^{i-1}\biggr)^2 = (2^q-1)^2,\\
  \vw^{\T} 
  \begin{bmatrix}
    -\1_{q-1}  \\ 1
  \end{bmatrix}
  \begin{bmatrix}
    -\1_{q-1}  \\ 1
  \end{bmatrix}^{\T}
  \vw &= \biggl(2^{q-1}-\sum_{i=1}^{q-1} 2^{i-1}\biggr)^2 = 1, 
\end{align*}
and hence both $1$ and $(2^q-1)^2$ belong to $\mathcal{A}$. 
Finally, the convexity of $\mathcal{A}$ implies $\mathcal{B}\subset \mathcal{A}$. 

Now we prove the other direction, i.e., $\mathcal{A}\subset \mathcal{B}$. This is equivalent to 
showing
\begin{equation*}
  1\leq \vw^{\T}\mB\vw \leq (2^q-1)^2,\;\forall\; \mB\in \mathcal{C}.
\end{equation*}
For the upper bound, we first note that $\mB\in \semiS^q$ implies 
\begin{equation}\label{eq:bound for each entry of B}
  |B_{i,j}|\leq \sqrt{B_{i,i}B_{j,j}}=1,\;\;\;i\neq j,\;1\leq i,j\leq q.
\end{equation}
Since every entry of the matrix $\vw\vw^{\T}$ is positive, we have 
\begin{equation*}
  \vw^{\T}\mB\vw = \langle \mB, \vw\vw^{\T}\rangle \leq
  \langle \1\1^{\T}, \vw\vw^{\T}\rangle = (2^q-1)^2
\end{equation*}
for any $\mB\in \mathcal{C}$, and hence the upper bound holds. 

For the lower bound, 
it clearly holds when $q=1$. 
When $q>1$,
for any matrix ${\mB}\in \mathcal{C}$ we 
partition it as 
\begin{equation*}
  {\mB} = 
  \begin{bmatrix}
    \mB' & \vb' \\
    (\vb')^{\T} & 1
  \end{bmatrix},
\end{equation*}
where $\mB'\in \R^{(q-1)\times (q-1)}$ and $\vb'\in \R^{q-1}$. 
Note that we have $|b'_i|\leq 1$ for 
$1\leq i\leq q-1$ (cf. \cref{eq:bound for each entry of B}), and 
$\mB\in \semiS^q$ implies $\mB'\succeq \vb'(\vb')^{\T}$. 
Further, we let 
$ {\vw}' = \begin{bmatrix}
    1 & 2 & 4 & \ldots & 2^{q-2}
  \end{bmatrix}^{\T} 
  \in \R^{q-1}$
such that $\vw=[(\vw')^{\T}\quad 2^{q-1}]^{\T}$ (cf. \cref{eq:def of w}). 
We have 
\begin{align*}% \label{eq:from B to B_prime}
  {\vw}^{\T}{\mB}{\vw} &= 
  (\vw')^{\T}\mB\vw'+2^{q}(\vw')^{\T}\vb'+(2^{q-1})^2\\
  &\geq  
  (\vw')^{\T}\vb'(\vb')^{\T}\vw'+2^{q}(\vw')^{\T}\vb'+(2^{q-1})^2\\
   &= \bigl((\vw')^{\T}\vb'+2^{q-1}\bigr)^2.
\end{align*}
Since
\[(\vw')^{\T}\vb' = \sum_{i=1}^{q-1} 2^{i-1}b'_i \geq -\sum_{i=1}^{q-1} 2^{i-1} = -2^{q-1}+1,\] 
we immediately get 
${\vw}^{\T}{\mB}{\vw}\geq ( -2^{q-1}+1+2^{q-1})^2=1$,
and hence the lower bound also holds. 

The proof is now complete. 
\section*{Acknowledgments}
The authors would like to thank Professors Zi Xu and Cheng Lu for their useful discussions on an earlier version of this paper.

\bibliographystyle{siamplain}

% \bibliography{bib/journalshort,bib/research}
\end{document}